\author{Lennart Ronge}
\date{\today}
\title {Index Theory for Globally Hyperbolic Spacetimes}
\def\head#1{\textbf{#1}}
\def\limn{\lim_{n\rightarrow \infty}}
\def\<{\left<}
\def\>{\right>}
\def\Rp{{\mathbb{R}_{\geq0}}}
\def\C{{\mathbb{C}}}
\def\R{{\mathbb{R}}}
\def\N{{\mathbb{N}}}
\def\mat#1#2#3#4{{\begin{pmatrix} #1&#2\\ #3&#4 \end{pmatrix}}}
\def\famop{A(t)_{t\in [0,T]}}
\def\LH{L^2([0,T],H)} 
\def\norm#1{\left|\left|#1\right|\right|}
\def\ddt{\frac{d}{dt}}
\def\insum#1{\sum\limits_{#1=0}^\infty}
\def\pdt{\frac{\partial}{\partial t}}
\def\pds{\frac{\partial}{\partial s}}
\def\pd#1{\frac{\partial}{\partial #1}}
\newtheorem{df}{Definition}[section]
\newtheorem{lemma}[df]{Lemma}
\newtheorem{thm}[df]{Theorem}
\newtheorem*{thm*}{Theorem}
\DeclareMathOperator{\Dom}{Dom}
\DeclareMathOperator{\Ran}{Ran}
\DeclareMathOperator{\Ker}{Ker}
\DeclareMathOperator{\Coker}{Coker}
\DeclareMathOperator{\APS}{APS}
\DeclareMathOperator{\ind}{ind}
\DeclareMathOperator{\sfl}{sf}
\DeclareMathOperator{\spann}{span}
\DeclareMathOperator{\Dim}{Dim}
\DeclareMathOperator{\spec}{sp}
\begin{document}
\maketitle
\tableofcontents
\newpage
\section {Introduction}
In [B\"aSt], a Lorentzian version of the Atiyah-Patodi-Singer index theorem is derived for globally hyperbolic spacetimes. To this end, the authors first show the equality\footnote{In [B\"aSt], there is an additional summand on the right hand side. This summand disappears if the boundary conditions are chosen slightly differently, as will be done in this paper.}
$$\ind(D_{\APS})=\sfl(A),$$
where $D_{\APS}$ is related to the Dirac operator with Atiyah-Patodi-Singer boundary conditions and $\sfl(A)$ is the spectral flow of $A$, which is the family of Dirac operators on spacelike slices of a foliation of the spacetime. To show the equality above, B\"ar and Strohmaier investigate the evolution operator associated to $A$.\\
The purpose of this paper is to derive the aforementioned equality in a more general functional analytic setting:
\begin{itemize}
\item $\famop$ is a strongly continuously differentiable family of self-adjoint Fredholm operators on some Hilbert space $H$. All $A(t)$ have the same domain.
\item $D$ is the closure of the operator $\ddt-iA$ on $L^2([0,T],H)$. 
\item $D_{\APS}$ is the operator $D$ restricted to those functions $f$ with $f(0)$ in the range of the negative spectral projection of $A(0)$ and $f(T)$ in the range of the positive spectral projection of $A(T)$.
\end{itemize}
At first glance, this looks similar to the result of [RoSa], where the equality
$$\ind\left(\ddt-A\right)=\sfl(A)$$
is shown under similar conditions. However, there are significant differences to the case at hand. Firstly, in [RoSa] the operators are defined on the entire real line, rather than an interval with boundary conditions. Secondly, the additional $i$ leads to qualitatively different behavior of the operator $D$. In [RoSa], both the norm of $\ddt f$ and of $Af$ can be estimated by the graph norm $||f||_{\ddt-A}$. In the setting at hand, however, the equation
$$Df=0$$
has solutions with arbitrarily large $\ddt f$ and $Af$. In fact, the equation has a unique solution for any initial value. This can be used to define an evolution operator, which will play an important role in this paper.\\\\
In Section \ref{s2}, some background facts will be given and some straightforward statements about families of operators will be derived for later use. In Section \ref{s3}, the setting is described and the spectral flow is defined, following [PH]. The spectral flow is then related to the Fredholm \index of a pair of projections. Section \ref{s4} shows that the equations
$$Df=g$$
and
$$f(s)=x$$
have a unique solution for every $x\in H$,$s\in [0,T]$ and $g\in\LH$. The evolution operator will be a family $(Q(t,s))_{s,t\in [0,T]}$ of isometries defined in such a way that for $g=0$ the solution of the above is given by
$$f(t)=Q(t,s)x,$$
i.e. it describes evolution subject to the equation $Df=0$. The proofs in section \ref{s4} are mostly from [PA], reformulated for the setting at hand.
In chapter \ref{s5}, we will prove the main theorem of this thesis:
\begin{thm*}
\head{(Main Theorem)}
If $(D|_{[0,t]})_{\APS}$ is Fredholm for all $t\in[0,T]$, we have
$$\ind(D_{\APS})=\sfl(A).$$
\end{thm*}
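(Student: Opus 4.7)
The plan is to use the evolution operator $Q(t,s)$ from Section \ref{s4} to recast $D_{\APS}$ as a Hilbert-space problem on $H$. Every solution of $Df=0$ has the form $f(t) = Q(t,0) f(0)$, so the APS conditions $f(0)\in\Ran P_-(0)$, $f(T)\in\Ran P_+(T)$ (with $P_-(0),P_+(T)$ the APS projections and $P_-(T):=I-P_+(T)$) identify $\Ker D_{\APS}$ with $\Ker T$, where
$$T := P_-(T)\,Q(T,0)\bigr|_{\Ran P_-(0)} : \Ran P_-(0) \longrightarrow \Ran P_-(T).$$
Computing $D_{\APS}^*$ by integration by parts --- its kernel satisfies $D^*g=0$, again evolving under $Q$, but with boundary conditions swapped to $g(0)\in\Ran P_+(0)$, $g(T)\in\Ran P_-(T)$ --- and using the unitarity $Q(T,0)^{-1}=Q(0,T)$, an analogous computation identifies $\Coker D_{\APS}$ with $\Coker T$. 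Hence $\ind D_{\APS} = \ind T$, which by Section \ref{s3} coincides with the Fredholm index of the pair of projections $(Q(T,0)P_-(0)Q(T,0)^{*},\, P_+(T))$.

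To match this with $\sfl(A)$, I would choose a partition $0 = t_0 < \dots < t_N = T$ fine enough that the Phillips formulation gives
$$\sfl(A) = \sum_{i=0}^{N-1} \ind\bigl(P_-(t_i),\, P_-(t_{i+1})\bigr),$$
and use the cocycle identity $Q(T,0) = Q(t_N,t_{N-1}) \cdots Q(t_1,t_0)$ together with the standing Fredholm hypothesis on every $(D|_{[0,t]})_{\APS}$ to telescope $\ind T$ as $\sum_i \ind T_i$, where $T_i := P_-(t_{i+1}) Q(t_{i+1},t_i)\bigr|_{\Ran P_-(t_i)}$. The hypothesis is exactly what is needed to guarantee that each intermediate operator is Fredholm, so that the additivity is valid.

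The main obstacle is then the local identification $\ind T_i = \ind(P_-(t_i),\, P_-(t_{i+1}))$ on each short sub-interval. Heuristically, over a short time $Q(t_{i+1},t_i)$ is close to $I$ in a suitable topology, so $Q(t_{i+1},t_i)P_-(t_i)Q(t_{i+1},t_i)^{*}$ is a small perturbation of $P_-(t_i)$ that should not alter the pair-of-projections index. Making this rigorous without compact resolvent or manifold structure will require the full strength of the strong continuous differentiability of $\famop$, the continuity properties of $Q$ established in Section \ref{s4}, and the stability results for the index of a pair of projections from Section \ref{s3}. Assembling these pieces then yields $\ind D_{\APS} = \sum_i \ind T_i = \sum_i \ind(P_-(t_i),P_-(t_{i+1})) = \sfl(A)$.
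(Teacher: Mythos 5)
Your first step is sound and matches the paper: identifying $\ind(D_{\APS})$ with the index of the boundary operator $T=[P_{<0}(T)Q(T,0):H_{<0}(0)\rightarrow H_{<0}(T)]$, equivalently with the index of the pair $(P_{<0}(0),Q(0,T)P_{<0}(T)Q(T,0))$, is exactly Theorem \ref{inD} (note the second member of your pair should be the negative projection $P_{<0}(T)$ conjugated by $Q$, not $P_{\geq 0}(T)$). The second half of your argument, however, has genuine gaps. First, the hypothesis only gives Fredholmness of $(D|_{[0,t]})_{\APS}$, i.e.\ of the pairs $(P_{<0}(0),\hat P_{<0}(t))$ anchored at time $0$; it does \emph{not} give Fredholmness of your sub-interval operators $T_i$ (that would correspond to $(D|_{[t_i,t_{i+1}]})_{\APS}$ being Fredholm, a strictly stronger assumption used only for the gluing corollary), nor of the static pairs $(P_{<0}(t_i),P_{<0}(t_{i+1}))$. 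In particular the formula $\sfl(A)=\sum_i\ind(P_{<0}(t_i),P_{<0}(t_{i+1}))$ is not available here: Phillips's definition, used in the paper, goes through finite-dimensional spectral subspaces $H_{[0,a_n)}$, and equating it with pair indices (Theorem \ref{flowind}) itself requires the Fredholm-pair hypothesis you are trying to establish. Second, even granting Fredholmness, $\ind T\neq\sum_i\ind T_i$ without further work: composing the $T_i$ inserts the projections $P_{<0}(t_i)$ at intermediate times, and the discrepancy is governed by the off-diagonal blocks $P_{\geq 0}(t_{i+1})Q(t_{i+1},t_i)P_{<0}(t_i)$ and $P_{<0}(t_{i+1})Q(t_{i+1},t_i)P_{\geq 0}(t_i)$, which need not be compact or small --- the counterexample of Section \ref{s6} is precisely about such ``infinite exchange'' under a bounded perturbation. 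Third, the local step $\ind T_i=\ind(P_{<0}(t_i),P_{<0}(t_{i+1}))$ rests on $Q(t_{i+1},t_i)$ being close to $I$, but $Q$ is only strongly continuous, never norm continuous in $B(H)$ (already $e^{ihA_0}\not\rightarrow I$ in norm for unbounded $A_0$), and strong closeness does not stabilize the index of a pair of projections.

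The paper avoids all sub-interval considerations by keeping time $0$ as a fixed reference and conjugating the whole family: it sets $\hat A(t):=Q(0,t)A(t)Q(t,0)$, proves that $\hat A$ again satisfies the standing assumptions (the only nontrivial point being strong continuous differentiability, Lemma \ref{hatdiff}, handled via resolvents), observes that its spectral projections are $\hat P_{<0}(t)$ with $\hat P_{<0}(0)=P_{<0}(0)$, and notes that your hypothesis is, by Theorem \ref{inD} applied on $[0,t]$, exactly the Fredholm-pair hypothesis of Theorem \ref{flowind} for $\hat A$. Then $\ind(D_{\APS})=\ind(P_{<0}(0),\hat P_{<0}(T))=\sfl(\hat A)=\sfl(A)$, the last equality by unitary invariance of the spectral flow (Theorem \ref{conflow}). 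If you want to salvage a partition argument, you would have to run it for the evolved projections $\hat P_{<0}(t)$ rather than for $P_{<0}(t)$ and $Q$ on sub-intervals; that is essentially what the proof of Theorem \ref{flowind} does.
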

Here $D|_{[0,t]}$ is the "restriction" of $D$ to the interval $[0,t]$. The theorem will be proven by showing that both sides of the equation are equal to the \index of a certain Fredholm pair of projections. This pair consists of the negative spectral projections at times $0$ and $T$ of the operator 
$$\hat A(t)=Q(0,t)A(t)Q(t,0).$$
The main theorem contains as a special case the equality derived in [B\"aSt]. The general idea of the proof and some of its parts are similar as in [B\"aSt], while other parts are different. In particular, the use of Fredholm pairs and the aforementioned projections allows for a much wider generalization than a straightforward adaption of the arguments of [B\"aSt] would.  Section \ref{s6} will describe a counterexample which shows that Fredholmness of $(D|_{[0,t]})_{\APS}$ is not a consequence of the other assumptions.

\section{Preliminaries}
\label{s2}
\subsection{General Notation}
All vector spaces used in this paper will be complex.
For linear operators, we will write
$$AB:=A\circ B$$
for composition and 
$$Ax:=A(x)$$
for evaluation. $I$ denotes the identity (it will be clear from the context on which space). For a linear operator $A$ and a scalar $\lambda$, write 
$$A+\lambda:=A+\lambda I.$$
The adjoint of $A$ will be denoted by $A^*$.

$B(X,Y)$ denotes the space of bounded operators between normed spaces $X$ and $Y$ and 
$$B(X):=B(X,X).$$

A projection will always mean an orthogonal projection, i.e. a self-adjoint idempotent operator. 

An unbounded operator on a Hilbert space will be a linear map from a dense subspace to the whole space. 

For integrals, the convention
$$\int\limits_s^tf(r)dr:=-\int\limits_t^sf(r)dr$$
for $s>t$ will be used. 

For any function $f$, the restriction of $f$ to some subset $X$ of its domain will be denoted by $f|_X$. In cases where the codomain is important, i.e. for Fredholm properties, $[f:X\rightarrow Y]$ will denote $f$ with domain restricted to $X$ and codomain restricted to $Y$.

\subsection{Notation of Norms}
Various norms will be used in this paper. If only one norm is applicable, this will simply be denoted by $||\cdot||$. Otherwise, $||\cdot||$ denotes the norm of the Hilbert space $H$ (to be introduced later) when applied to an element of $H$ and the operator norm with respect to the norm of $H$ when applied to an operator.
Similarly, $\<\cdot,\cdot\>$ will denote the scalar product on $H$ if more than one scalar product is applicable.
$||\cdot||_X$ will be used to denote the norm of some normed space $X$.  For an unbounded operator $U$ on a  Hilbert space, $||\cdot||_U$ denotes the graph norm on the domain $\Dom(U)$, which is induced by the graph scalar product
$$\<x,y\>_U=\<x,y\>+\<Ux,Uy\>.$$
This is chosen such that the map $x\mapsto (x,Ux)$ becomes an isometry from $\Dom(U)$ to the graph of $U$. If $U$ is closed (i.e. has a closed graph) this turns $\Dom(U)$ into a Hilbert space. If nothing else is specified, the graph norm is used as the norm on the domain of an unbounded operator.\\
 $||\cdot||_{X\rightarrow Y}$ denotes the operator norm as a map from $X$ to $Y$.\\
If the operator norm is to be taken with respect to non-canonical norms $||\cdot||_1$ and $||\cdot||_2$, it will be denoted
$$||\cdot||_{||\cdot||_1\rightarrow ||\cdot||_2}$$
For $f,g:[0,T]\rightarrow H$, $||f||_{L^p}:=||(||f(\cdot)||)||_{L^p}$ denotes the norm of $f$ in $L^p([0,T],H)$ and 
$$\<f,g\>_{L^2}:=\int\limits_0^T \<f(t),g(t)\>dt$$
denotes the scalar product of $\LH$.

\subsection{Functional Calculus}
Throughout this paper, it is assumed that the reader is familiar with the Spectral Theorem for unbounded self-adjoint operators, which allows to define to $f(A)$, for a self-adjoint operator $A$ and a Borel measurable function $f$ on its spectrum. One version of it can be found, for example, in [PE] (Theorem 5.3.8 and surroundings). This functional calculus is compatible with addition, multiplication and composition in the ways the notation suggests. Note that $\chi_a(A)$, with $\chi_a$ denoting the characteristic function of $\{a\}$, is the projection onto the eigenspace of $A$ with eigenvalue $a$. $f(A)$ commutes with all operators that commute with $A$. If $U$ is a unitary operator, $U^*AU$ is self-adjoint as well and $f(U^*AU)=U^*f(A)U$.

\subsection{Integration of Banach Space Valued Functions}
Let $J\subset \R$ be a compact interval and $X$ be a Banach space (Integration works for general measure spaces instead of $J$, but we will only need a compact interval).

A function $f:J\rightarrow X$ is called strongly measurable, if it is an almost everywhere uniform limit of countably-valued measurable functions, i.e. functions of the form
$$\insum{n}\chi_{E_n}x_n,$$
where the $E_n$ are pairwise disjoint measurable sets, the $x_n$ are elements of $X$ and the sum is taken pointwise (at every point, only one summand is non-zero). Since we will not need any other notion of measurability, we will write measurable for strongly measurable.

Continuous functions are measurable. For measurable functions whose norm is integrable, there is a notion of integration called the Bochner integral, which has many of the properties of the usual Lebesgue integral (see Chapter 3 of [HiPh] for further details). In particular, it coincides with the Riemann integral for continuous functions and the fundamental theorem of calculus holds. Integration can be interchanged with bounded operators, i.e.
$$\int\limits_J Sf(t)dt=S\int\limits_Jf(t)dt$$
for a bounded operator $S$. Taking $S$ to be the inclusion shows that an integral with respect to a stricter norm on a subspace agrees with the integral with respect to the weaker norm of the ambient space. 

The inequality
$$\norm{\int\limits_Jf(t)dt}\leq\int\limits_J\norm{f(t)}dt$$
holds, as well as Lebesgue's dominated convergence theorem.
\begin{lemma}
\label{diffint}
If $f:J\times J\rightarrow X$ is continuously differentiable, we have
$$\pdt\int\limits_s^t f(t,r) dr=f(t,t)+\int\limits_s^t \pdt f(t,r) dr.$$
\end{lemma}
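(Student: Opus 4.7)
The plan is to compute the total derivative of $G(t):=\int\limits_s^t f(t,r)\,dr$ directly from the difference quotient, decomposing the change into two pieces that correspond, respectively, to the motion of the upper limit and to the variation of the first argument of $f$. Namely, for $h\neq 0$ with $t+h\in J$,
$$\frac{G(t+h)-G(t)}{h}=\frac{1}{h}\int\limits_t^{t+h} f(t+h,r)\,dr+\int\limits_s^t\frac{f(t+h,r)-f(t,r)}{h}\,dr.$$
The two summands will be shown to converge to $f(t,t)$ and to $\int\limits_s^t\pdt f(t,r)\,dr$ respectively as $h\to 0$.

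For the first summand, continuity of $f$ at $(t,t)$ ensures that $\norm{f(t+h,r)-f(t,t)}$ can be made arbitrarily small uniformly for $r\in[t,t+h]$ by choosing $|h|$ small, and the triangle inequality for the Bochner integral then yields $\tfrac{1}{h}\int\limits_t^{t+h} f(t+h,r)\,dr\to f(t,t)$. For the second summand, apply the Banach-valued fundamental theorem of calculus to $\tau\mapsto f(t+\tau,r)$ to obtain
$$\frac{f(t+h,r)-f(t,r)}{h}=\frac{1}{h}\int\limits_0^h\pdt f(t+\tau,r)\,d\tau.$$
Since $\pdt f$ is continuous on the compact set $J\times J$, it is uniformly continuous there, so the right-hand side converges to $\pdt f(t,r)$ uniformly in $r\in[s,t]$ as $h\to 0$. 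Uniform convergence on a compact interval, combined again with the Bochner triangle inequality, lets us interchange the limit with the integral, giving $\int\limits_s^t\pdt f(t,r)\,dr$.

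The only genuinely delicate point is justifying the ``differentiation under the integral sign'' step in the Banach-valued setting. The decomposition above sidesteps any appeal to a dedicated Banach-valued Leibniz rule by reducing both limits to uniform convergence of the integrands on a compact interval; this is ensured by the joint continuity of $f$ and $\pdt f$ on $J\times J$ together with compactness. Everything else is immediate from the Bochner-integral machinery already quoted in the preceding paragraphs, most notably the triangle inequality and the fundamental theorem of calculus.
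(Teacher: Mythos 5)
Your proof is correct, and it follows the same basic strategy as the paper -- write down the difference quotient of $G(t)=\int_s^t f(t,r)\,dr$ and split it into a boundary contribution and an interior contribution -- but the details differ in two ways. First, your decomposition has two pieces, with the boundary integrand evaluated at $f(t+h,\cdot)$, handled directly by joint continuity of $f$ at $(t,t)$; the paper instead splits into three pieces, treating $h^{-1}\int_t^{t+h}f(t,r)\,dr$ by the fundamental theorem of calculus and killing the extra piece $h^{-1}\int_t^{t+h}\bigl(f(t+h,r)-f(t,r)\bigr)dr$ with the bound $\norm{f(t+h,r)-f(t,r)}\leq |h|\max\norm{\pd{x}f}$. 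Second, for the interior term the paper invokes dominated convergence with that same Lipschitz-type bound, whereas you apply the vector-valued fundamental theorem of calculus in the first variable and then use uniform continuity of $\pdt f$ on the compact square $J\times J$ to get uniform convergence of the integrands, so that only the triangle inequality $\norm{\int g}\leq\int\norm{g}$ is needed to pass to the limit. Your route is marginally more elementary (no dominated convergence, no pointwise-a.e. limit argument), at the cost of making the uniform-continuity bookkeeping explicit; the paper's version is a bit shorter because it leans on the dominated convergence theorem already quoted for the Bochner integral. Both arguments use exactly the hypothesis of continuous differentiability (continuity of $\pdt f$ on the compact square), so neither is more general than the other. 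One cosmetic point: for $h<0$ your estimates should be read with the paper's sign convention $\int_t^{t+h}=-\int_{t+h}^t$ and absolute values $|h|$, as the paper also tacitly does.
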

\begin{proof}
For $h\in \R\backslash \{0\}$ such that $t+h\in J$, we have
\begin{align*}
&h^{-1}\left(\int\limits_s^{t+h}f(t+h,r)dr-\int\limits_s^t f(t,r)dr\right)\\
&=h^{-1}\int\limits_s^t f(t+h,r)-f(t,r)dr+h^{-1}\int\limits_t^{t+h} f(t+h,r)-f(t,r)dr+h^{-1}\int\limits_t^{t+h} f(t,r)dr.
\end{align*}
Letting $h$ tend to $0$, the first term converges to 
$$\int\limits_s^t \pdt f(t,r)dr$$
by dominated convergence, as  
$$||f(t+h,r)-f(t,r)||\leq h\max\limits_{x,y\in J}\norm{\pd{x} f(x,y)}.$$
The second term converges to 0 by the same inequality.
The last term converges to $f(t,t)$ by the fundamental theorem of calculus.
\end{proof}

One can define $L^p(J,X)$ in the usual way, as the space of measurable functions $f$ such that
$$||f||_{L^p(J,X)}:=\norm{(||f(\cdot)||)}_{L^p(J,\R)}$$
is finite, modulo functions vanishing outside a set of measure zero. This is a Banach space. For $p<q$, we have $L^q(J,X)\subseteq L^p(J,X)$ with bounded inclusion, as an immediate consequence of the corresponding result for real valued functions (using that $J$ is a compact interval). With the usual amount of sloppiness, functions will be identified with their $L^p$-classes and if a class has a continuous representative, it will be identified with that representative. It is sometimes convenient to consider a dense subspace of "nice" functions, like the one provided in the following lemma:
\begin{lemma}
\label{Sdense}
For $p\in[1,\infty)$, the set 
$$S(J,X):=\left\{\left.\sum\limits_{n=1}^{N}\phi_n x_n\right|N\in\N, \phi_n\in C^\infty(J,\R), x_n\in X\right\}$$
is dense in $L^p(J,X)$.
\end{lemma}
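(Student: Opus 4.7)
The approach is a two-step approximation: first approximate $f\in L^p(J,X)$ by a finite "simple" function of the form $\sum_{n=1}^N \chi_{E_n} x_n$ with $E_n\subset J$ measurable and $x_n\in X$, and then replace each $\chi_{E_n}$ by a smooth function, approximating in $L^p(J,\R)$. Putting these together produces an element of $S(J,X)$ close to $f$ in $L^p(J,X)$.

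For the first step, strong measurability of $f$ yields a sequence of countably-valued measurable functions $f_k$ with $f_k\to f$ uniformly off a null set. Since $J$ is a compact interval, uniform a.e.\ convergence forces $\norm{f_k(t)}\leq\norm{f(t)}+1$ pointwise a.e.\ for $k$ large, so $f_k\in L^p(J,X)$ and $f_k\to f$ in $L^p$ by dominated convergence. Fixing such a $k$ and writing $f_k=\sum_{n=0}^\infty \chi_{E_n}x_n$ (a pointwise sum in which only one term is nonzero at each point), the partial sums are dominated in norm by $\norm{f_k}$ and converge pointwise to $f_k$, so a second application of dominated convergence produces a finite simple function arbitrarily close to $f_k$ in $L^p$.

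For the second step, $C^\infty(J,\R)$ is dense in $L^p(J,\R)$: by outer regularity of Lebesgue measure, any measurable $E\subset J$ is approximated in measure by a finite union of open subintervals, whose characteristic function can be smoothed at the endpoints by standard bump-function constructions at arbitrarily small $L^p$-cost. Choosing $\phi_n\in C^\infty(J,\R)$ with $\norm{\phi_n-\chi_{E_n}}_{L^p(J,\R)}$ small, the pointwise triangle inequality and Minkowski give
$$\norm{\sum_{n=1}^N\phi_nx_n-\sum_{n=1}^N\chi_{E_n}x_n}_{L^p(J,X)}\leq\sum_{n=1}^N\norm{\phi_n-\chi_{E_n}}_{L^p(J,\R)}\norm{x_n}_X,$$
which can be made arbitrarily small by taking the $\phi_n$ close enough to $\chi_{E_n}$.

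The only genuine obstacle is the bookkeeping in the first step, namely inferring from the "uniform a.e.\ limit" definition of strong measurability given in the paper that one actually obtains $L^p$-approximations by finite simple functions; the two dominated convergence arguments sketched above do exactly this. The smoothing step and the final assembly via the triangle inequality are routine.
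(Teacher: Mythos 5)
Your proposal is correct and follows essentially the same route as the paper's proof: reduce to countably-valued functions via the uniform a.e.\ limit in the definition of strong measurability (using finiteness of $J$), truncate to finite simple functions by dominated convergence, and then replace characteristic functions by $C^\infty$ approximations in $L^p(J,\R)$. The extra bookkeeping you supply (the norm domination and the Minkowski estimate) just fills in details the paper leaves implicit.
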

\begin{proof}
As any characteristic function can be approximated by $C^\infty$-functions in $L^p(J,\R)$, all finitely valued functions in $L^p(J,X)$ can be approximated by elements in $S(J,X)$. Finitely valued functions are dense in the countably-valued functions, as $\sum\limits_{n=0}^N\chi_{E_n}x_n$ converges in $L^p$ to $\insum{n}\chi_{E_n}x_n$ by dominated convergence. As uniform convergence almost everywhere implies $L^p$-convergence on a compact interval, countably valued functions are dense in $L^p(J,X)$ by definition of measurability. Putting everything together, we obtain that $S(J,X)$ is dense in $L^p(J,X)$.
\end{proof}

\subsection{Compact Operators and Fredholm Operators}
Let $X$ and $Y$ be Hilbert spaces. An operator is called compact if it maps the unit ball into some compact set. The set $K(X,Y)$ of compact operators from $X$ to $Y$ is a closed ideal in $B(X,Y)$. Operators of finite rank (dimension of the range) are dense in $K(X,Y)$. \\
An operator $F:X\rightarrow Y$ is called Fredholm if its kernel $\Ker(F)$ and its (vector space) cokernel 
$$\Coker(F)= Y/\Ran(F)$$
are finite dimensional. In this case, its Fredholm index is defined by
$$\ind(F)=\Dim(\Ker(F))-\Dim(\Coker(F)).$$
If $F$ is Fredholm, $F$ has closed range. If $F$ has closed range, 
$$\Coker(F)\cong \Ran(F)^\bot= \Ker(F^*).$$
If $F$ is a Fredholm operator and $K$ is a compact operator, $F+K$ is Fredholm and
$$\ind(F+K)=\ind(F).$$
If any of the two operators $F$,$G$ and $FG$ are Fredholm, then so is the third and we have
$$\ind(FG)=\ind(F)+\ind(G).$$
The adjoint of a compact operator is compact, the adjoint of a Fredholm operator is Fredholm.
An operator $F\in B(X,Y)$ is Fredholm if and only if it is "invertible up to compact operators", i.e. there is $G\in B(Y,X)$, such that $FG-I$ and $GF-I$ are compact. In this case, $G$ is called a parametrix for $F$. 
An unbounded operator $F$ on $X$ is called Fredholm, if it is Fredholm in $B(\Dom(F),X)$.

\subsection{Fredholm Pairs}
\label{sympair}
Let $P$ and $Q$ be projections in a Hilbert space (recall that "projection" means "orthogonal projection"). The pair $(P,Q)$ is called a Fredholm pair, if $[Q:{\Ran(P)}\rightarrow{\Ran(Q)}]$ is Fredholm. In this case the index of $(P,Q)$ is defined to be the Fredholm index of $[Q:{\Ran(P)}\rightarrow{\Ran(Q)}]$. We shall only need three theorems about Fredholm pairs. More information on them can be found in [AvSeSi].
\begin{lemma}
For two projections $P$ and $Q$, the following are equivalent
\begin{itemize}
\item $(P,Q)$ is a Fredholm pair
\item $(Q,P)$ is a Fredholm pair
\item $(I-P,I-Q)$ is a Fredholm pair
\end{itemize}
\end{lemma}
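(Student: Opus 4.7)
The plan is to split the proof into the two equivalences $(1)\Leftrightarrow(2)$ and $(1)\Leftrightarrow(3)$, which I would handle by quite different mechanisms: duality for the first, and an explicit computation of kernels and orthogonal complements of ranges for the second.

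For $(1)\Leftrightarrow(2)$, write $T:=[Q:\Ran(P)\to\Ran(Q)]$. Using $\<Qx,y\>=\<x,Qy\>=\<x,y\>$ for $x\in\Ran(P)$ and $y\in\Ran(Q)$, the Hilbert space adjoint of $T$ can be identified as $T^{*}=[P:\Ran(Q)\to\Ran(P)]$. Since the adjoint of a Fredholm operator is Fredholm (stated in the Preliminaries), $(1)$ and $(2)$ follow from each other.

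For $(1)\Leftrightarrow(3)$, set also $T':=[I-Q:\Ran(I-P)\to\Ran(I-Q)]$. A direct calculation yields
\begin{align*}
\Ker T &= \Ran(P)\cap\Ran(I-Q) = (\Ran T')^{\bot}\cap\Ran(I-Q),\\
\Ker T' &= \Ran(I-P)\cap\Ran(Q) = (\Ran T)^{\bot}\cap\Ran(Q),
\end{align*}
so if $T$ is Fredholm, both $\Ker T'$ and the orthogonal complement of $\Ran T'$ in $\Ran(I-Q)$ are finite-dimensional.

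The main obstacle is to upgrade this to closed range of $T'$, since a finite-dimensional orthogonal complement of the range does not by itself imply closed range for a bounded operator. I would handle this by showing that $T'$ is bounded below on $(\Ker T')^{\bot}\cap\Ran(I-P)$: if not, there would exist unit vectors $x_n\in\Ran(I-P)$ with $x_n\bot\Ker T'$ and $(I-Q)x_n\to 0$, so that $z_n:=Qx_n$ satisfies $z_n-x_n\to 0$ and $T^{*}z_n=Pz_n=P(Qx_n-x_n)\to 0$. The already-established Fredholmness of $T^{*}$ (whose kernel equals $\Ker T'$) would then force $z_n$, and hence $x_n$, to lie asymptotically in $\Ker T'$, contradicting $\|x_n\|=1$ and $x_n\bot\Ker T'$. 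Once closed range is established, $T'$ is Fredholm, giving $(1)\Rightarrow(3)$; the reverse implication is obtained by applying the same argument after replacing $P,Q$ by $I-P,I-Q$.
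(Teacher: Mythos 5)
Your proof is correct, but it takes a genuinely different route from the paper. The paper disposes of all three equivalences in one stroke by citing Proposition 3.1 of [AvSeSi]: $(P,Q)$ is a Fredholm pair if and only if $\pm 1\notin\spec_{ess}(P-Q)$, and since $(I-P)-(I-Q)=Q-P=-(P-Q)$ and the criterion is symmetric in $\pm 1$, the same condition characterizes $(Q,P)$ and $(I-P,I-Q)$ as well. You instead give a self-contained argument using only facts already stated in the Preliminaries: the identification $T^{*}=[P:\Ran(Q)\rightarrow\Ran(P)]$ together with ``adjoint of Fredholm is Fredholm'' for the symmetry in the pair, and the identities $\Ker T=(\Ran T')^{\bot}\cap\Ran(I-Q)$, $\Ker T'=(\Ran T)^{\bot}\cap\Ran(Q)$ for the complementary pair. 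You were right to flag that a finite-dimensional orthogonal complement of $\Ran T'$ does not by itself give closed range, and your bounded-below argument on $(\Ker T')^{\bot}\cap\Ran(I-P)$ closes that gap correctly: the approximating sequence $z_n=Qx_n$ lands in $\Ran(Q)$, $\Ker T^{*}=\Ran(Q)\cap\Ran(I-P)=\Ker T'$, and the Fredholmness of $T^{*}$ forces $\operatorname{dist}(x_n,\Ker T')\rightarrow 0$, contradicting $x_n\bot\Ker T'$, $\|x_n\|=1$; symmetry then gives the converse. What each approach buys: the paper's proof is a one-liner but leans on an external spectral characterization of Fredholm pairs, whereas yours is longer but elementary and independent of [AvSeSi], which would matter if one wanted the paper to be self-contained on this point.
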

\begin{proof}
Proposition 3.1 in [AvSeSi] states the $(P,Q)$ is Fredholm if and only if 1 and -1 are not in the essential spectrum of $P-Q$. As this is true if and only if 1 and -1 are not in the essential spectrum of
$$(I-P)-(I-Q)=Q-P,$$
the claim follows.
\end{proof}
\begin{lemma}
\label{compfred}
If $P-Q$ is compact, then $(P,Q)$ is a Fredholm pair.
\end{lemma}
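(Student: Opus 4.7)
The plan is to construct an explicit parametrix for $T:=[Q:\Ran(P)\rightarrow\Ran(Q)]$. The natural candidate is $S:=[P:\Ran(Q)\rightarrow\Ran(P)]$, obtained by simply swapping the roles of $P$ and $Q$. For any $x\in\Ran(P)$ one has $Px=x$, so
$$STx=PQx=P(P+(Q-P))x=Px+P(Q-P)x=x+P(Q-P)x,$$
whence $ST-I_{\Ran(P)}$ is the restriction to $\Ran(P)$ of $P(Q-P)$. Since $Q-P$ is compact by assumption and $K(H)$ is a two-sided ideal in $B(H)$, this error term is compact. A symmetric computation gives $TS-I_{\Ran(Q)}=Q(P-Q)|_{\Ran(Q)}$, which is also compact. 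By the parametrix characterization of Fredholm operators recalled in the preliminaries, $T$ is Fredholm, which is exactly the statement that $(P,Q)$ is a Fredholm pair.

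An even shorter route would be to appeal to the previous lemma: it is proved using the fact from Proposition 3.1 of [AvSeSi] that $(P,Q)$ is Fredholm iff $\pm1$ are not in the essential spectrum of $P-Q$, and the essential spectrum of any compact operator is contained in $\{0\}$. However, the parametrix approach above is preferable for a self-contained exposition because it uses only the basic Fredholm theory summarised in the preliminaries and avoids re-citing [AvSeSi].

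I do not foresee a serious obstacle in the parametrix approach; the only bookkeeping point to watch is that the restrictions of the error terms to $\Ran(P)$ and $\Ran(Q)$ really do define compact operators between those subspaces, which follows because compactness is preserved under composition with the bounded inclusion and bounded projection between a subspace and the ambient Hilbert space.
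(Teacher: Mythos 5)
Your parametrix argument is correct and is essentially the same as the paper's proof, which also takes $[P:\Ran(Q)\rightarrow\Ran(P)]$ as a parametrix for $[Q:\Ran(P)\rightarrow\Ran(Q)]$ and identifies the error terms as $P(Q-P)|_{\Ran(P)}$ and $Q(P-Q)|_{\Ran(Q)}$, compact because compacts form an ideal. No issues.
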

\begin{proof}
$[P:{\Ran(Q)}\rightarrow\Ran (P)]$ is a parametrix for $[Q:{\Ran(P)}\rightarrow\Ran (Q)]$, as
$$(QP-I)|_{\Ran(Q)}=Q(P-Q)|_{\Ran(Q)}$$
and
$$(PQ-I)|_{\Ran(P)}=P(Q-P)|_{\Ran(P)}$$
are compact.
\end{proof}
The last theorem is shown in [LE] (lemma 3.2). It states that continuous families of Fredholm pairs have constant index:
\begin{thm}
\label{contfred}
If $P,Q:[0,1]\rightarrow B(X)$ are continuous paths of projections in some Hilbert space $X$, such that $(P(t),Q(t))$ is a Fredholm pair for all $t\in[0,1]$. Then 
$$\ind(P(0),Q(0))=\ind(P(1),Q(1)).$$
\end{thm}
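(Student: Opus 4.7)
The plan is to show that the integer-valued function $t\mapsto \ind(P(t),Q(t))$ is locally constant on $[0,1]$ and then conclude by connectedness. The central difficulty is that the Fredholm operator $[Q(t):\Ran(P(t))\rightarrow \Ran(Q(t))]$ lives between spaces that vary with $t$, so one cannot directly invoke the standard fact that the Fredholm index of a continuous family of operators between \emph{fixed} Banach spaces is locally constant.

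To bypass this, I would use Kato-style intertwiners. Fix $t_0\in[0,1]$ and, for $t$ in some neighbourhood of $t_0$, set
$$U_P(t):=P(t)P(t_0)+(I-P(t))(I-P(t_0)),\qquad U_Q(t):=Q(t)Q(t_0)+(I-Q(t))(I-Q(t_0)).$$
A direct expansion gives $U_P(t_0)=P(t_0)+(I-P(t_0))=I$ and the intertwining identities
$$P(t)U_P(t)=U_P(t)P(t_0)=P(t)P(t_0),$$
and analogously for $Q$. By continuity of $P$ and openness of the invertibles in $B(X)$, $U_P(t)$ is invertible on a neighbourhood of $t_0$; the intertwining relation then forces $U_P(t)^{-1}$ to conjugate $P(t)$ to $P(t_0)$ as well, so that $U_P(t)$ restricts to a topological isomorphism $\Ran(P(t_0))\rightarrow \Ran(P(t))$. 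The same holds for $U_Q$.

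Shrinking the neighbourhood if necessary, I would then consider
$$F(t):=U_Q(t)^{-1}\,Q(t)\,U_P(t)\big|_{\Ran(P(t_0))}:\Ran(P(t_0))\longrightarrow \Ran(Q(t_0)),$$
which is a norm-continuous family of bounded operators between two \emph{fixed} Hilbert spaces, since continuity of $P,Q$ together with continuity of inversion on the invertibles yields continuity of $U_P$, $U_Q^{-1}$, and hence of $F$. Because the two outer factors are topological isomorphisms, $F(t)$ is Fredholm if and only if $[Q(t):\Ran(P(t))\rightarrow \Ran(Q(t))]$ is, and the two Fredholm indices agree; by hypothesis this holds for every $t$. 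The classical local constancy of the Fredholm index on continuous families between fixed spaces then gives $\ind(P(t),Q(t))=\ind(P(t_0),Q(t_0))$ on a neighbourhood of $t_0$. Connectedness of $[0,1]$ upgrades this to a global equality, yielding the theorem.

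The main obstacle is, as flagged above, the moving domain and codomain; once one has the intertwiners $U_P,U_Q$ and checks their continuity and invertibility near each $t_0$, the reduction to a family between fixed spaces is automatic and the index identification is essentially a bookkeeping step.
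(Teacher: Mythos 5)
Your argument is correct. Note, however, that the paper does not prove this statement at all: it is quoted from the literature ([LE], Lemma 3.2), so there is no in-paper proof to compare against. What you supply is a complete, self-contained proof along the standard lines: the Kato-type intertwiners $U_P(t)=P(t)P(t_0)+(I-P(t))(I-P(t_0))$ satisfy $U_P(t_0)=I$ and $P(t)U_P(t)=U_P(t)P(t_0)$, hence for $t$ near $t_0$ they are invertible and restrict to topological isomorphisms $\Ran(P(t_0))\rightarrow\Ran(P(t))$, and likewise for $U_Q$; conjugating $[Q(t):\Ran(P(t))\rightarrow\Ran(Q(t))]$ by these isomorphisms produces a norm-continuous family $F(t)$ of Fredholm operators between the fixed spaces $\Ran(P(t_0))$ and $\Ran(Q(t_0))$ with $\ind F(t)=\ind(P(t),Q(t))$, so the classical local constancy of the index plus connectedness of $[0,1]$ finishes the proof. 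The only point worth making explicit is that you use norm continuity of $t\mapsto P(t),Q(t)$ (both for the openness-of-invertibles step and for norm continuity of $F$); this is the intended reading of the hypothesis, and it is also how the theorem is applied in the paper (the spectral projections there are norm continuous by the cited Kato perturbation result), so your proof covers exactly what is needed.
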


\subsection{Some Considerations on Families of Operators}
This subsection contains some theorems on strongly continuously differentiable families of operators between Banach spaces that are presented here so as to make the presentation of the main argument later on more straightforward. None of the results presented here are particularly surprising or new, the point is to check that things work as expected.\\\\
For this whole section, let $X$, $Y$ and $Z$ be Banach spaces, let $J$ be a compact interval and let $S:J\rightarrow B(X,Y)$ and $R:J\rightarrow B(Y,Z)$ be families of operators. \\\\
A family of operators $J\rightarrow B(X,Y)$ is called strongly continuous, if it is continuous with respect to the strong operator topology on $B(X,Y)$. It is called strongly continuously differentiable, if it is differentiable with respect to the strong operator topology and the derivative is strongly continuous.

We want to show that (strong) continuity and continuous differentiability of $R$ and $S$ imply the  same for $RS$. Note that this implies the analogous results for evaluation instead of composition: If $f:[0,T]\rightarrow Y$ is a function, we can set $S:[0,T]\rightarrow B(\C,Y)$, $S(t)(1)=f(t)$. Then $R(t)f(t)$ has the same regularity properties that $R(t)S(t)$ has in the strong topology. 

\begin{lemma}
\label{dbound}
If $S$ is strongly continuous, it is uniformly bounded.
\end{lemma}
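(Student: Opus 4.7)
The plan is to reduce the claim to the Banach--Steinhaus (uniform boundedness) principle applied to the family $\{S(t)\}_{t\in J}\subset B(X,Y)$. Since $X$ is a Banach space, it suffices to establish pointwise boundedness of this family: for every $x\in X$, $\sup_{t\in J}\norm{S(t)x}<\infty$. Banach--Steinhaus then upgrades this to uniform boundedness of the operator norms $\norm{S(t)}$, which is exactly what is to be shown.

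For the pointwise bound, fix $x\in X$. Strong continuity of $S$ means by definition that the map $t\mapsto S(t)x$ is continuous from $J$ to $Y$. As $J$ is compact, its image under this continuous map is a compact subset of $Y$ and therefore norm-bounded, which supplies the required bound $\sup_{t\in J}\norm{S(t)x}<\infty$. There is no real obstacle in the argument; the only ingredient beyond the definition of strong continuity is the completeness of $X$, which is guaranteed by the blanket assumption that $X$ is a Banach space.
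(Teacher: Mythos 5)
Your argument is correct and is essentially the paper's proof: pointwise boundedness of $\{S(t)x\}_{t\in J}$ from continuity of $t\mapsto S(t)x$ on the compact interval $J$, upgraded to uniform boundedness by Banach--Steinhaus. You merely spell out the compactness step that the paper leaves implicit; there is nothing to change.
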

\begin{proof}
As $S(t)x$ is continuous in $t$ for every $x$, $S$ is a pointwise bounded family in $B(X,Y)$. Thus, by the Banach-Steinhaus uniform boundedness principle, it is uniformly bounded.
\end{proof}

\begin{lemma}
If $S$ and $T$ are strongly continuous, the map $t\mapsto R(t)S(t)$ is strongly continuous on $J$.
\end{lemma}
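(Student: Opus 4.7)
The plan is to fix $x \in X$ and $t_0 \in J$, and show that $R(t)S(t)x \to R(t_0)S(t_0)x$ as $t \to t_0$. The standard trick here is to insert and subtract the mixed term $R(t)S(t_0)x$, yielding the telescoping decomposition
$$R(t)S(t)x - R(t_0)S(t_0)x = R(t)\bigl(S(t)x - S(t_0)x\bigr) + \bigl(R(t) - R(t_0)\bigr)S(t_0)x.$$
(I am reading the hypothesis of the lemma as "$S$ and $R$ are strongly continuous", since $T$ is never introduced; this is the only sensible reading.)

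The second summand converges to $0$ as $t \to t_0$ directly from strong continuity of $R$, applied to the fixed vector $S(t_0)x \in Y$. For the first summand, I would estimate
$$\norm{R(t)\bigl(S(t)x - S(t_0)x\bigr)} \leq \norm{R(t)}\cdot\norm{S(t)x - S(t_0)x}.$$
By Lemma \ref{dbound} (applied to $R$), the operator norms $\norm{R(t)}$ are uniformly bounded on $J$ by some constant $C$, and strong continuity of $S$ gives $\norm{S(t)x - S(t_0)x}\to 0$. Hence the first summand also vanishes in the limit.

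There is no real obstacle; the only point to be careful about is that one cannot simply factor and use continuity of $R$ on a vector that itself depends on $t$, which is precisely why the uniform boundedness supplied by Lemma \ref{dbound} is needed. Combining the two estimates gives $\norm{R(t)S(t)x - R(t_0)S(t_0)x} \to 0$ for arbitrary $x$, which is strong continuity of $RS$ at $t_0$, and since $t_0\in J$ was arbitrary, on all of $J$.
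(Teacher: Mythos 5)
Your proof is correct and is essentially the paper's argument made explicit: the paper simply cites that operator multiplication is strongly continuous on norm-bounded sets (together with the uniform bound from Lemma \ref{dbound}), and your telescoping decomposition with the uniform bound on $\norm{R(t)}$ is exactly the standard proof of that fact. You are also right that the hypothesis should read ``$R$ and $S$'' rather than ``$S$ and $T$''.
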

\begin{proof}
Multiplication of operators is continuous when restricted to subsets on which the operator norm is bounded. As $R$ and $S$ have bounded range by the previous lemma, $t\mapsto R(t)S(t)$ can be viewed as a composition of continuous functions and hence is continuous again (with respect to the strong topology).
\end{proof} 
The following lemma asserts that the product rule holds for strong differentiation.
\begin{lemma}
\label{prodiff}
Let $Y_0\subseteq Y$ be a subspace of $Y$ with a stronger norm that turns it into a Banach space. Assume S is a strongly continuous family in $B(X,Y_0)$, strongly continuously differentiable in $B(X,Y)$. Assume $R$ is a strongly continuous family in $B(Y,Z)$ that restricts to a strongly differentiable family in $B(Y_0,Z)$. Then $t\mapsto R(t)S(t)$ is strongly continuously differentiable in $B(X,Z)$, with derivative $R'(t)S(t)+R(t)S'(t)$.
\end{lemma}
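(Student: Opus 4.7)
The plan is to prove this by the standard product-rule splitting of the difference quotient, but being careful that the derivative of $R$ only exists on the smaller space $Y_0$, so one factor must always live in $Y_0$. Fix $x\in X$ and $t\in J$ with $t+h\in J$. I would write
$$\frac{R(t+h)S(t+h)x-R(t)S(t)x}{h}=R(t+h)\frac{S(t+h)x-S(t)x}{h}+\frac{R(t+h)-R(t)}{h}S(t)x$$
and show each summand converges in $Z$.

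For the first summand, the difference quotient $h^{-1}(S(t+h)x-S(t)x)$ converges to $S'(t)x$ in $Y$ by assumption. Lemma \ref{dbound} applied to $R$ in $B(Y,Z)$ gives a uniform bound $\|R(t+h)\|\leq M$, and strong continuity of $R$ yields $R(t+h)y\to R(t)y$ for every fixed $y\in Y$. The standard ``strong times norm-bounded'' estimate
$$\|R(t+h)y_h-R(t)S'(t)x\|\leq M\|y_h-S'(t)x\|+\|R(t+h)S'(t)x-R(t)S'(t)x\|$$
then shows this summand converges to $R(t)S'(t)x$ in $Z$.

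For the second summand, the crucial point is that $S(t)x\in Y_0$ because $S(t)\in B(X,Y_0)$. Hence we may apply strong differentiability of $R$ in $B(Y_0,Z)$ to this fixed element of $Y_0$, obtaining
$$\frac{R(t+h)-R(t)}{h}S(t)x\longrightarrow R'(t)S(t)x\quad\text{in }Z.$$
Together this gives strong differentiability of $RS$ in $B(X,Z)$ with derivative $R'(t)S(t)+R(t)S'(t)$.

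For strong continuity of the derivative, I would appeal to the preceding lemma (continuity of a product of strongly continuous families) twice. The family $t\mapsto R(t)S'(t)$ is strongly continuous in $B(X,Z)$ because $R$ is strongly continuous in $B(Y,Z)$ and $S'$ is strongly continuous in $B(X,Y)$. The family $t\mapsto R'(t)S(t)$ is strongly continuous in $B(X,Z)$ because $S$ is strongly continuous in $B(X,Y_0)$ and $R'$ is strongly continuous in $B(Y_0,Z)$; the previous lemma again applies, with the roles of $Y$ now played by $Y_0$. The main thing to be careful about is exactly this bookkeeping: $R'$ is not defined as a family in $B(Y,Z)$, so we must make sure every time we compose with $R'$ the other factor takes values in $Y_0$ in the strongly continuous (or strongly differentiable) sense. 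Once that is tracked, no new estimate is needed beyond Lemma \ref{dbound} and the preceding continuity lemma.
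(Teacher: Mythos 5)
Your argument is correct, but it uses the other standard splitting of the difference quotient and is in that respect genuinely different from the paper. The paper writes $h^{-1}(R(t+h)S(t+h)-R(t)S(t))=h^{-1}(R(t+h)-R(t))S(t+h)+R(t)h^{-1}(S(t+h)-S(t))$, so the difference quotient of $R$ acts on the \emph{moving} point $S(t+h)x$; making that limit work requires Banach--Steinhaus applied to the family of difference quotients of $R$ (to get uniform bounds in $B(Y_0,Z)$) together with the fact that multiplication is jointly continuous on norm-bounded sets in the strong topology, and it is here that the strong continuity of $S$ in $B(X,Y_0)$ is used, to have $S(t+h)x\to S(t)x$ in $Y_0$. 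Your splitting $R(t+h)\,h^{-1}(S(t+h)x-S(t)x)+h^{-1}(R(t+h)-R(t))S(t)x$ sidesteps both of these: the $R$-difference quotient only ever hits the fixed vector $S(t)x\in Y_0$, so pointwise strong differentiability of $R$ on $Y_0$ suffices, and the moving factor is handled by the uniform bound on $R$ itself from Lemma \ref{dbound} plus strong continuity of $R$ in $B(Y,Z)$ --- a slightly more elementary argument. One caveat: your final step, strong continuity of the derivative, invokes strong continuity of $R'$ in $B(Y_0,Z)$, which the lemma's wording ("strongly differentiable") does not literally grant; but this hypothesis is needed for the stated conclusion in any case, and the paper's own proof simply does not address the continuity of the derivative at all, so on this point you are more complete rather than less.
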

\head{Remark:} Note that this includes the special case $Y_0=Y$, in which the theorem asserts that compositions of strongly continuously differentiable functions are strongly continuously differentiable.
\begin{proof}
The calculation is the same as for any continuous bilinear map:
\begin{align*}
&\lim\limits_{h\rightarrow 0}h^{-1}(R(t+h)S(t+h)-R(t)S(t))\\
&=\lim\limits_{h\rightarrow 0}h^{-1}(R(t+h)-R(t))S(t+h)+R(t)h^{-1}(S(t+h)-S(t))
\end{align*}
Note that multiplication is not continuous in the strong topology, but it is continuous when restricted to bounded subsets. As the Banach-Steinhaus theorem guarantees uniform boundedness of the differential quotients (evaluated at some vector, the differential quotients converge, thus they are bounded) , this is sufficient to pull the limit into the products, viewing the first summand as a composition $X\rightarrow Y_0\rightarrow Z$ and the second as a composition $X\rightarrow Y\rightarrow Z$. We get that the above is equal to
\begin{align*}
&(\lim\limits_{h\rightarrow 0}h^{-1}(R(t+h)-R(t)))(\lim\limits_{h\rightarrow 0}S(t+h))+R(t)(\lim\limits_{h\rightarrow 0}h^{-1}(S(t+h)-S(t)))\\
&=R'(t)S(t)+R(t)S'(t)
\end{align*}
\end{proof}
\begin{lemma}
\label{densecont}
Assume $X_0\subseteq X$ is a dense subspace. If $S:[0,T]\rightarrow B(X,Y)$ is a uniformly bounded family of operators such that its restriction to $X_0$ is strongly continuous in $B(X_0,Y)$, then $S$ is strongly continuous.
\end{lemma}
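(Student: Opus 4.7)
The plan is a standard three-epsilon density argument. Fix $x\in X$ and $t_0\in[0,T]$; the goal is to show that $t\mapsto S(t)x$ is continuous at $t_0$ in the norm of $Y$. Let $M:=\sup_{t\in[0,T]}\norm{S(t)}<\infty$, which exists by assumption.

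Given $\varepsilon>0$, use density of $X_0$ in $X$ to pick some $x_0\in X_0$ with $\norm{x-x_0}_X<\varepsilon/(3M+1)$. Then for any $t\in[0,T]$ I would insert $\pm S(t)x_0\pm S(t_0)x_0$ and apply the triangle inequality:
\begin{align*}
\norm{S(t)x-S(t_0)x}&\leq \norm{S(t)(x-x_0)}+\norm{S(t)x_0-S(t_0)x_0}+\norm{S(t_0)(x_0-x)}\\
&\leq 2M\cdot\frac{\varepsilon}{3M+1}+\norm{S(t)x_0-S(t_0)x_0}.
\end{align*}
The first bound is independent of $t$, and the middle term tends to $0$ as $t\to t_0$ because the restriction of $S$ to $X_0$ is strongly continuous in $B(X_0,Y)$, which in particular means $t\mapsto S(t)x_0$ is continuous into $Y$ for every fixed $x_0\in X_0$ (this holds whether the norm on $X_0$ coincides with that of $X$ or is stronger, since for a single fixed $x_0$ the relevant quantity is just the $Y$-valued map $t\mapsto S(t)x_0$). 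Choosing $t$ close enough to $t_0$ makes this term $<\varepsilon/3$, so altogether the left-hand side is less than $\varepsilon$ for $t$ sufficiently close to $t_0$.

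There is no real obstacle here; the only point worth noticing is that uniform boundedness of the family (on all of $X$) is essential to control the tail $\norm{S(t)(x-x_0)}$ by a constant times $\norm{x-x_0}_X$ independently of $t$, which is why that hypothesis is stated explicitly rather than being derived as in Lemma \ref{dbound} (the Banach–Steinhaus argument there needed strong continuity on the whole of $X$, which is precisely what we are trying to establish).
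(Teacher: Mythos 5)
Your proof is correct and is essentially the same $\frac{\epsilon}{3}$-density argument the paper gives: approximate $x$ by $x_0\in X_0$, control the two error terms by the uniform bound, and use strong continuity on $X_0$ for the middle term. The remark that only the $Y$-valued map $t\mapsto S(t)x_0$ matters for fixed $x_0$ (so the choice of norm on $X_0$ is irrelevant) is a fine observation, but nothing beyond the paper's own reasoning.
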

\begin{proof}
This is a standard $\frac{\epsilon}{3}$-argument.
Let $C$ be an upper bound for $S$. Given $x\in X$, $s\in [0,T]$ and $\epsilon>0$, choose $x'\in X_0$ such that 
$$||x-x'||\leq \frac{\epsilon}{3C}.$$ 
Choose $\delta>0$ such that $|s-t|<\delta$ implies 
$$||S(t)x'-S(s)x'||\leq\frac{\epsilon}{3}.$$
Then for $|s-t|\leq\epsilon$ we have
$$||S(t)x-S(s)x||\leq||S(t)||||x-x'||+||S(s)||||x-x'||+||S(t)x'-S(s)x'||\leq\epsilon,$$
so $S(t)x$ is continuous around $t=s$.
\end{proof}

\begin{lemma}
\label{diffbound}
If $S$ is strongly differentiable, it is norm continuous.
\end{lemma}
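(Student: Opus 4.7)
The plan is to apply the Banach--Steinhaus uniform boundedness principle to the difference quotients of $S$ at a fixed point. For $t \in J$ and small nonzero $h$ with $t+h \in J$, I would write
$$T_h := h^{-1}(S(t+h) - S(t)) \in B(X,Y),$$
so that the hypothesis of strong differentiability at $t$ says exactly that $T_h x \to S'(t)x$ for every $x \in X$ as $h \to 0$.

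The key observation is the trivial identity $S(t+h) - S(t) = h \cdot T_h$, which reduces norm continuity of $S$ at $t$ to showing that the operator norms $\|T_h\|$ stay bounded as $h \to 0$. I would argue this by contradiction: if $S$ fails to be norm continuous at $t$, there is $\epsilon > 0$ and a sequence $h_n \to 0$ with $\|S(t+h_n) - S(t)\| \geq \epsilon$. Pointwise convergence of $T_{h_n}x$ to $S'(t)x$ implies that $\{T_{h_n}x\}$ is bounded for each $x$, so Banach--Steinhaus yields a uniform bound $M$ on $\|T_{h_n}\|$. Then $\|S(t+h_n) - S(t)\| = |h_n|\cdot\|T_{h_n}\| \leq M|h_n| \to 0$, contradicting the choice of $h_n$.

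There is no real obstacle; this is essentially the same reasoning already used in Lemma \ref{dbound} and in the proof of Lemma \ref{prodiff}, applied to the family of difference quotients rather than to $S$ itself. It is worth emphasizing that the same argument does \emph{not} give norm differentiability, since Banach--Steinhaus only delivers boundedness of $T_h$, not convergence of $T_h$ to $S'(t)$ in operator norm.
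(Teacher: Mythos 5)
Your argument is correct and is essentially the paper's own proof: both apply Banach--Steinhaus to the family of difference quotients at $t$ (the paper to all of them at once, you along a sequence $h_n\to 0$ via contradiction) to get a uniform bound $M$, and then conclude $\|S(t+h)-S(t)\|\leq M|h|\to 0$. The cosmetic difference of arguing by contradiction along a sequence changes nothing of substance.
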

\begin{proof}
Let $t\in J$ and let
$$F:=\left\{\left.\frac{1}{s-t}(S(s)-S(t))\right|s\in J \backslash\{t\}\right\}.$$
Since S is strongly differentiable, $(\frac{1}{s-t}(S(s)-S(t))x)$ is bounded for every $x$. By the Banach-Steinhaus uniform boundedness principle, $F$ is uniformly bounded by some constant $C$.
Thus 
$$||S(s)-S(t)||\leq C(s-t)\stackrel{s\rightarrow t}{\rightarrow}0.$$
\end{proof}

\begin{lemma}
\label{idiff}
If $S$ is strongly continuously differentiable and $S(t)$ is invertible for all $t\in J$, then the family
\begin{align*}
S^{-1}:J&\rightarrow B(X,Y)\\
t&\mapsto S(t)^{-1}
\end{align*}
is strongly continuously differentiable in B(Y,X) with derivative $-S^{-1}S'S^{-1}$.
\end{lemma}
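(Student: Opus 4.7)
The plan is to exploit the algebraic identity
$$S(t+h)^{-1}-S(t)^{-1}=-S(t+h)^{-1}\bigl(S(t+h)-S(t)\bigr)S(t)^{-1},$$
valid whenever both inverses exist, and then differentiate inside it. To make sense of the right-hand side I first need to know that $S(t)^{-1}$ is a well-behaved bounded operator, uniformly in $t$.

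First, by Lemma \ref{diffbound}, $S$ is norm continuous on $J$. Each $S(t)$ is a norm-continuous bijection between Banach spaces, so $S(t)^{-1}\in B(Y,X)$ by the open mapping theorem. The set of invertible elements of $B(X,Y)$ is open and inversion is norm-continuous there (standard Neumann-series argument around each $S(t)$), so $t\mapsto S(t)^{-1}$ is norm continuous. Compactness of $J$ then yields a uniform bound $C:=\sup_{t\in J}\norm{S(t)^{-1}}<\infty$. This takes care of the only non-trivial input.

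Second, I fix $t\in J$ and $y\in Y$ and set $x:=S(t)^{-1}y$. Dividing the identity by $h$ and applying it to $y$ gives
$$h^{-1}\bigl(S(t+h)^{-1}-S(t)^{-1}\bigr)y=-S(t+h)^{-1}\Bigl[h^{-1}\bigl(S(t+h)-S(t)\bigr)x\Bigr].$$
Strong differentiability of $S$ makes the bracketed vector converge to $S'(t)x$ in $Y$, while the prefactor $S(t+h)^{-1}$ is bounded by $C$ and converges in norm to $S(t)^{-1}$. A standard two-term estimate (uniform boundedness controls the error on the converging vector, and strong convergence controls the error on the limit vector) shows that the right-hand side tends to $-S(t)^{-1}S'(t)x=-S(t)^{-1}S'(t)S(t)^{-1}y$, so $S^{-1}$ is strongly differentiable at $t$ with the asserted derivative.

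Third, strong continuity of $-S^{-1}S'S^{-1}$ follows from Lemma \ref{prodiff} applied with $Y_0=Y$ (or rather its strong-continuity corollary): $S^{-1}$ is norm, hence strongly, continuous by the first step, $S'$ is strongly continuous by hypothesis, so the composition is strongly continuous. The main (and really only) obstacle is the first step; once uniform boundedness and norm continuity of $S^{-1}$ are in hand, the rest mirrors the scalar identity $(1/f)'=-f'/f^2$.
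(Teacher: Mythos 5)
Your proof is correct and follows essentially the same route as the paper: both rest on the identity $S(t+h)^{-1}-S(t)^{-1}=-S(t+h)^{-1}(S(t+h)-S(t))S(t)^{-1}$, together with norm continuity (hence uniform boundedness) of $S^{-1}$ obtained from Lemma \ref{diffbound} and continuity of inversion, and then let $h\to 0$ using strong differentiability of $S$. Your treatment is slightly more explicit about the two-term estimate and about the strong continuity of the derivative, but the argument is the same.
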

\begin{proof}
By the previous theorem, $S$ is norm-continuous. As the inversion map is norm-continuous as well, $S^{-1}$ is norm-continuous (and in particular uniformly bounded).

Let $t\in J$ and $h\in\R$ small enough such that $t+h \in J$. Then for $y\in Y$
\begin{align*}
(S(t+h)^{-1}-S(t)^{-1})y&=S(t+h)^{-1}(S(t)-S(t+h))S(t)^{-1}y\\
&=-S(t+h)^{-1}(hS'(t)S^{-1}(t)y+o(h))\\
&=-hS(t)^{-1}S'(t)S(t)^{-1}y +o(h),
\end{align*}
which implies the claim.
\end{proof}

The point of the next theorem is to show that strong differentiability is preserved when changing domain and range, as long as the derivative permits it.
\begin{thm}
\label{strictdiff}
Assume $X_0\subseteq X$ is a dense subspace and $Y_0\subseteq Y$ is a subspace with a stricter norm, with respect to which it is complete. Assume additionally that $S(t)\in B(X,Y_0)$ for all $t\in J$  and that $t\mapsto S(t)|_{X_0}$ is  strongly differentiable in $B(X_0, Y)$ with a derivative extending to a continuous family $S'$ in $B(X,Y_0)$. Then $S$ is differentiable with derivative $S'$ in the strong topology of $B(X,Y_0)$.
\end{thm}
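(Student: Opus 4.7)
The plan is to use the fundamental theorem of calculus in the weaker space $Y$ in order to obtain an integral representation of $S(t)x$ that actually lives in $Y_0$, extend it from $X_0$ to $X$ by density, and then differentiate the integral to read off strong differentiability in $B(X,Y_0)$.

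First, fix some $t_0 \in J$ and let $x \in X_0$. By the hypothesis, $t \mapsto S(t)x$ is continuously differentiable into $Y$ with derivative $t \mapsto S'(t)x$. The fundamental theorem of calculus for Banach-space valued functions (as recalled in Section 2.4) thus gives
$$S(t)x - S(t_0)x = \int_{t_0}^{t} S'(r)x \, dr$$
as an identity in $Y$. Now, because $S'$ is strongly continuous as a family in $B(X,Y_0)$, the integrand $r \mapsto S'(r)x$ is continuous, hence integrable, in $Y_0$. The Bochner integral taken in $Y_0$ maps to the Bochner integral taken in $Y$ under the bounded inclusion $Y_0 \hookrightarrow Y$, so the two integrals coincide as elements of $Y$. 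Since $S(t)x - S(t_0)x$ already lies in $Y_0$ (both $S(t), S(t_0)$ take values in $Y_0$), the identity above in fact holds in $Y_0$.

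Next, I extend this identity from $X_0$ to all of $X$. The left-hand side, as a function of $x$, is the bounded operator $S(t) - S(t_0) \in B(X, Y_0)$. For the right-hand side, strong continuity of $S'$ in $B(X,Y_0)$ combined with Lemma \ref{dbound} gives a uniform bound $\sup_{r \in J} \|S'(r)\|_{X \to Y_0} \leq C < \infty$, so $x \mapsto \int_{t_0}^t S'(r) x \, dr$ defines a bounded operator $X \to Y_0$ of norm at most $C|t - t_0|$. Two bounded operators in $B(X, Y_0)$ which agree on the dense subspace $X_0$ must coincide on $X$, hence
$$S(t)x - S(t_0)x = \int_{t_0}^{t} S'(r)x \, dr \qquad \text{in } Y_0$$
for every $x \in X$.

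Differentiating this integral representation concludes the proof: for $x \in X$ and $t \in J$,
$$\frac{S(t+h)x - S(t)x}{h} - S'(t)x = \frac{1}{h}\int_t^{t+h}\bigl(S'(r)x - S'(t)x\bigr)\, dr,$$
whose $Y_0$-norm is dominated by $\sup_{r \in [t,t+h]} \|S'(r)x - S'(t)x\|_{Y_0}$, which tends to $0$ as $h \to 0$ by strong continuity of $S'$ in $B(X,Y_0)$. Thus $S$ is strongly differentiable in $B(X,Y_0)$ with derivative $S'$.

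The only real obstacle is bridging the gap between the hypothesis (differentiability only on the dense subspace $X_0$, and only in the weaker norm of $Y$) and the conclusion (differentiability on all of $X$ in the stronger norm of $Y_0$). The trick that makes this go through painlessly is that the FTC identity, once derived in the weaker topology, automatically lives in $Y_0$ because both endpoints and the continuous integrand do, and this identity is then linear and bounded enough in $x$ to propagate from $X_0$ to $X$.
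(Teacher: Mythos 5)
Your proof is correct and follows essentially the same route as the paper: establish the fundamental-theorem-of-calculus identity in $Y$ for $x\in X_0$, observe that the integral also exists in $Y_0$ and agrees with the one in $Y$ so the identity holds in $Y_0$, extend to all of $X$ by boundedness of both sides and density of $X_0$, and then differentiate the integral representation using strong continuity of $S'$ in $B(X,Y_0)$. The only difference is that you spell out the final differentiation step and the uniform bound via Lemma \ref{dbound}, which the paper leaves implicit.
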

\begin{proof}
For $x\in X_0$, we have
$$S(t)x=S(t_0)x+\int\limits_{t_0}^tS'(r)xdr$$
with the integral being taken in $Y$. By the assumptions on $S'$, the integral also exists in $Y_0$ and, as the norm of $Y_0$ is stricter than that of $Y$, takes the same value. Thus we have the same equality in $Y_0$. Moreover, both sides are bounded with respect to the norm of $X$, so the equality holds for all $x\in X$. Thus $S'(t)x$ is the derivative of $S(t)x$ in $Y_0$ for all $x\in X$.
\end{proof}

\begin{lemma}
\label{compeq}
Let $X$, $Y$ be separable Hilbert spaces. For $K:X\rightarrow Y$, the following are equivalent:
\begin{enumerate}
\item $K$ is compact.
\item For any Hilbert space Z and any sequence of $R_n\in B(Y,Z)$ converging  to a limit $R\in B(Y,Z)$ in the strong topology, $R_nK$ converges to $RK$ in the operator norm.
\item If B is an (ordered) orthonormal Basis of $Y$ and $P_n$ denotes the  orthogonal projection onto the span of the first $n$ basis vectors (or all, if $\Dim(Y)<n$), $P_nK$ converges to $K$ in norm.
\item If B is an (ordered) orthonormal Basis of $X$ and $P_n$ denotes the  orthogonal projection onto the span of the first $n$ basis vectors (or all, if $\Dim(X)<n$), $KP_n$ converges to $K$ in norm.
\end{enumerate}
\end{lemma}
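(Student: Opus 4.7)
The plan is to establish the cyclic chain $(1)\Rightarrow(2)\Rightarrow(3)\Rightarrow(1)$ and then deduce $(1)\Leftrightarrow(4)$ by passing to adjoints, since taking the adjoint exchanges the roles of $X$ and $Y$ and transforms ``$KP_n\to K$'' into ``$P_n K^*\to K^*$''.

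For $(1)\Rightarrow(2)$, the key fact is that a strongly convergent sequence of bounded operators converges uniformly on every relatively compact set. Given $R_n\to R$ strongly, Lemma~\ref{dbound}'s argument (Banach--Steinhaus) yields a uniform bound $M$ on $\|R_n\|$ and $\|R\|$. For a compact set $C\subseteq Y$ and $\epsilon>0$, cover $C$ by finitely many $\epsilon$-balls $B(y_i,\epsilon)$, pick $N$ so large that $\|(R_n-R)y_i\|<\epsilon$ for all $i$ and $n\geq N$, and use the triangle inequality to get $\|(R_n-R)y\|<(2M+1)\epsilon$ for all $y\in C$. Applying this to $C=\overline{K(\{x\in X:\|x\|\leq 1\})}$, which is compact by compactness of $K$, yields $\|R_nK-RK\|\to 0$.

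For $(2)\Rightarrow(3)$, take $Z=Y$ and $R_n=P_n$: any $y\in Y$ has the Fourier expansion $y=\sum_k\langle y,b_k\rangle b_k$ with respect to the basis $B=(b_k)$, so its partial sums $P_ny$ converge to $y$, i.e.\ $P_n\to I$ strongly, and (2) gives $P_nK\to K$ in norm. For $(3)\Rightarrow(1)$, each $P_nK$ has finite rank and is therefore compact; since $K(X,Y)$ is closed in $B(X,Y)$, its norm limit $K$ is compact as well.

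Finally, for $(1)\Leftrightarrow(4)$, observe that $K$ is compact if and only if $K^*$ is, and that $(KP_n)^*=P_n^*K^*=P_nK^*$ since $P_n$ is an orthogonal projection; as taking adjoints preserves the operator norm, $KP_n\to K$ in norm is equivalent to $P_nK^*\to K^*$ in norm, which by the already-proven equivalence $(1)\Leftrightarrow(3)$ applied to $K^*:Y\to X$ is equivalent to compactness of $K^*$, hence of $K$. The main obstacle is the first implication, where one must argue carefully that strong convergence upgrades to norm convergence precisely on the relatively compact image of the unit ball; everything else is a routine consequence of basic Hilbert space theory and the closedness of the compact ideal.
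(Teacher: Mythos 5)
Your proposal is correct, and its overall architecture (the cycle $(1)\Rightarrow(2)\Rightarrow(3)\Rightarrow(1)$ followed by an adjoint argument for $(1)\Leftrightarrow(4)$) matches the paper's. The one place where you genuinely diverge is the implication $(1)\Rightarrow(2)$: the paper approximates $K$ in norm by a finite-rank operator $K'$ and exploits that $R_n\rightarrow R$ in norm on the finite-dimensional range of $K'$, whereas you instead use that $\overline{K(B_X)}$ is compact, cover it by finitely many $\epsilon$-balls, and upgrade strong convergence of $(R_n)$ to uniform convergence on this compact set via Banach--Steinhaus and the triangle inequality. Both arguments are complete and standard. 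Your version has the small advantage of not invoking the density of finite-rank operators in $K(X,Y)$ (so it would survive verbatim in a general Banach-space setting, where that density can fail), while the paper's version is slightly shorter given that this density was already recorded among the preliminaries. The remaining steps --- taking $R_n=P_n\rightarrow I$ strongly for $(2)\Rightarrow(3)$, closedness of the compact ideal for $(3)\Rightarrow(1)$, and $(KP_n)^*=P_nK^*$ together with $K$ compact iff $K^*$ compact for $(1)\Leftrightarrow(4)$ --- coincide with the paper's proof.
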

\head{Remark:} Separabity of Y can be enforced by replacing $Y$ with $K(X)$, which is separable if $X$ is separable. Thus all items apart from $3.$ are still equivalent, if $Y$ is not separable.
\begin{proof}
$"1\Rightarrow 2":$ By Banach-Steinhaus, the family $R_n$ is uniformly bounded by some constant $C$. If $K$ is compact, there is a sequence of finite rank operators converging to $K$ in the norm topology. For $\epsilon>0$, choose a finite rank operator $K'$ such that $$||K-K'||\leq\frac{\epsilon}{2(C+||R||)}.$$ As $\Ran(K')$ is finite dimensional, $R_n|_{\Ran(K')}$ converges to $R|_{\Ran(K')}$ in norm. Thus we can choose $n_0$ such that for all $n\geq n_0$, we have 
$$||(R-R_n)|_{\Ran(K')}||\leq\frac{\epsilon}{2||K'||}.$$
Then we get for all $n\geq n_0$:
$$||RK-R_nK||\leq||R(K-K')||+||R_n(K-K')||+||(R-R_n)K'||\leq\epsilon.$$
$"2\Rightarrow 3":$ This follows immediately, as $P_n$ converges strongly to the identity.\\
$"3\Rightarrow 1":$ $P_nK$ is of finite rank and hence compact. As the space of compact operators is norm closed, the limit $K$ is also compact.\\
$"1\Leftrightarrow 4":$ We have the following equivalences (using what is already shown):

$K$ is compact\\
$\Leftrightarrow$ $K^*$ is compact\\
$\Leftrightarrow$ $P_n K^*$ converges to $K^*$\\
$\Leftrightarrow$ $KP_n=(P_nK^*)^*$ converges to $K$.
\end{proof}

With this we can now show the following:

\begin{thm}
\label{dcomp}
If $S$ is strongly continuously differentiable and $S'(r)$ is compact for all $r\in J$, then $S(t)-S(s)$ is compact for all $s,t\in J$.
\end{thm}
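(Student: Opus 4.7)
The plan is to exhibit $T := S(t)-S(s)$ as an operator-norm limit of finite-rank operators, which forces $T$ to be compact since $K(X,Y)$ is closed in operator norm. Concretely, pick an orthonormal basis of $X$ (assumed separable, as is standard in the applications and consistent with Lemma \ref{compeq}) and let $P_n$ be the orthogonal projection onto the span of its first $n$ vectors, so $P_n \to I_X$ strongly. Then each $T P_n$ has finite rank, so it suffices to show $T P_n \to T$ in the operator norm of $B(X,Y)$.

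The input that links $T$ to $S'$ is the fundamental theorem of calculus for the strongly differentiable family $S$: evaluated on any $x\in X$, the function $r\mapsto S(r)x$ is continuously differentiable into $Y$ with derivative $S'(r)x$, so
$$T(I-P_n)x \;=\; \int_s^t S'(r)(I-P_n)x\,dr.$$
Estimating under the integral and taking supremum over the unit ball of $X$ (pulling the sup inside the integral) gives
$$\|T(I-P_n)\| \;\leq\; \int_s^t \|S'(r)(I-P_n)\|\,dr.$$

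Now for each fixed $r$ the operator $S'(r)$ is compact, so by Lemma \ref{compeq} (item 4 applied to $S'(r)$) one has $\|S'(r)(I-P_n)\|\to 0$ as $n\to\infty$. Lemma \ref{dbound} furnishes a uniform bound $\|S'(r)\|\leq C$, hence $\|S'(r)(I-P_n)\|\leq 2C$ uniformly in $r$ and $n$. So once we know the integrand is measurable in $r$, dominated convergence finishes the proof: the integral tends to $0$, yielding $T P_n \to T$ in operator norm and hence compactness of $T$.

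The one point requiring care is the measurability of $r\mapsto\|S'(r)(I-P_n)\|$; since $S'$ is only strongly (not norm-) continuous, one cannot appeal to continuity of the operator norm. This is handled by separability: choosing a countable dense set $\{d_k\}$ in the unit ball of $X$, the operator norm is $\sup_k \|S'(r)(I-P_n)d_k\|$, a countable supremum of continuous functions, hence lower semi-continuous and in particular Borel measurable. I expect this measurability check to be the only delicate step; the rest is a clean combination of the FTC, Lemma \ref{dbound}, Lemma \ref{compeq}, and dominated convergence.
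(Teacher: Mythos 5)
Your argument is, in every essential step, the same as the paper's: the fundamental theorem of calculus applied to $r\mapsto S(r)(I-P_n)x$, the estimate $\|(S(t)-S(s))(I-P_n)\|\leq\bigl|\int_s^t\|S'(r)(I-P_n)\|\,dr\bigr|$, pointwise convergence $\|S'(r)(I-P_n)\|\to 0$ via the compactness criterion of Lemma \ref{compeq}, a uniform bound from Banach--Steinhaus (Lemma \ref{dbound} applied to $S'$), and dominated convergence; your measurability remark (the norm of a strongly continuous family is a countable supremum of continuous functions, hence lower semi-continuous and measurable) is exactly the observation the paper uses elsewhere.

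The one genuine gap is your standing assumption that $X$ is separable. The theorem as stated makes no such hypothesis, and in the intended application the Hilbert spaces of the setting are not assumed separable, so the assumption has to be removed rather than waved at. Note that it is not enough to observe that each individual $S'(t)$, being compact, vanishes off a separable subspace (namely off $\overline{\Ran(S'(t)^*)}^\perp$): the parameter interval $J$ is uncountable, so the union of these subspaces over $t\in J$ is not obviously contained in a separable one. The paper closes this by first proving the claim that $S'$ vanishes on the complement of a single separable subspace: if not, there is an uncountable orthonormal set $B$ with $\sup_t\|S'(t)b\|>0$ for all $b\in B$; a pigeonhole argument produces $\epsilon>0$ and, using strong continuity of $S'$, a rational interval $I$ with $\|S'(t)b\|>\epsilon$ for all $t\in I$ and uncountably many $b$, contradicting that $S'(t)$ is a norm limit of finite-rank operators. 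With that claim one may restrict to a separable subspace without loss of generality, and then your argument goes through verbatim. Either add this reduction or weaken your statement to separable $X$; as written, your proof only covers the separable case.
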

\begin{proof}
Claim: $S'$ vanishes on the complement of a separable subspace.

Assume not. Then there is an uncountable orthonormal set $B$ with $$\sup\limits_{t\in J}||S'(t)b||>0$$ for all $b\in B$. As $(0,\infty)$ is the union of conutably many intervals of the form $[2^k,2^{k+1})$ for $k\in \mathbb{Z}$, the pigeon hole principle implies that there is $\epsilon>0$ such that
$$\sup\limits_{t\in J}||S'(t)b||>2\epsilon$$
for uncountably many $b$. By strong continuity of $S'$, there is a rational interval $I_b$ for each such $b$ such that $||S'(t)b||>\epsilon$ for $t\in I_b$. As there are only countably many rational intervals, we can apply the pigeon hole principle again to obtain an interval $I$ on which $||S'(t)b||>\epsilon$ holds for uncountably many $b$. Thus for $t\in I$, the operator $S'(t)$ is not the norm limit of finite rank operators (which vanish on almost all of the aforementioned $b$), contradicting compactness of $S'(t)$. 

This proves the claim. Thus we can assume without loss of generality that $X$ is separable, otherwise restrict to a separable subspace on whose complement $S'(t)$ vanishes.

Fix an orthonormal Basis $B$ of $X$.
By the previous lemma, $K\in B(X,Y)$ is compact if and only if $F_n(K):=||K(I-P_n)||$ converges to 0.
As
$$||(S(t)-S(s))(I-P_n)(x)||=\norm{\int\limits_s^tS'(r)(I-P_n)xdr}\leq\left|\int\limits_s^tF_n(S'(r))dr\right|\norm{x},$$
(with absolute values in case $t<s$) we have
$$F_n(S(t)-S(s))\leq \left|\int\limits_s^tF_n(S'(r))dr\right|.$$
By Banach Steinhaus, $S'(r)$ is uniformly bounded. By Lebesgue's dominated convergence theorem, we thus have
$$\limn \int\limits_s^tF_n(S'(r))dr=\int\limits_s^t\limn F_n(S'(r))dr=0.$$
Thus $F_n(S(t)-S(s))$ converges to 0, whence $S(t)-S(s)$ is compact.
\end{proof}

\section{Setting and Spectral Flow}
\label{s3}
\subsection{Setting}
\label{set}
\textbf{The objects defined in this section will be used throughout the paper. In particular, the Operator $A$ is always assumed to satisfy the following assumptions.}\\
Let $H$ be a Hilbert space. Let $W\subseteq H$ be a dense subset. Let $\famop$ be a family of unbounded self-adjoint Fredholm operators in $H$ with domain $W$. Use the graph norm of $A(0)$ as the norm of $W$. As $A(0)$ is self-adjoint and hence closed, $W$ is complete with respect to this norm. 
\begin{lemma}
All $A(t)$ are in $B(W,H)$
\end{lemma}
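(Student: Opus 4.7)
The plan is to apply the closed graph theorem. The domain $W$ is already a Banach space under the $A(0)$-graph norm, and $H$ is a Banach space, so it suffices to show that $A(t)$, viewed as a linear map from $(W,\|\cdot\|_{A(0)})$ to $H$, has closed graph.

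First I would observe that convergence in the $A(0)$-graph norm dominates convergence in $H$: if $w_n \to w$ in $W$, then in particular $w_n \to w$ in $H$. Now suppose $w_n \to w$ in $W$ and $A(t)w_n \to y$ in $H$. Then $w_n \to w$ in $H$ and $A(t)w_n \to y$ in $H$. Since $A(t)$ is self-adjoint, it is closed as an unbounded operator on $H$ with domain $W$, so $w \in W$ (which we already know) and $A(t)w = y$. Hence $A(t)$ has closed graph as a map $(W,\|\cdot\|_{A(0)}) \to H$.

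Since $A(t)$ is defined on all of $W$, the closed graph theorem applies and yields $A(t) \in B(W,H)$. There is no real obstacle here; the only mild subtlety is being careful about which topology closedness is meant in: $A(t)$ is a priori closed only as an unbounded operator on $H$, but that is exactly what is needed to conclude closedness with respect to the stronger $A(0)$-graph topology on the domain.
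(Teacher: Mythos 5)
Your argument is correct and is essentially the same as the paper's: both deduce closedness of the graph of $A(t)$ in $W\times H$ from its closedness in $H\times H$ (via self-adjointness) together with the fact that the $W$-norm dominates the $H$-norm, and then apply the closed graph theorem. No issues.
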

\begin{proof}
For $t\in[0,T]$, the graph of $A(t)$ is closed in $H\times H$. Thus it is also closed with respect to the stricter norm of $W\times H$. By the closed graph theorem, $A(t)$ is in $B(W,H)$.
\end{proof}
Additionally, assume that $t\mapsto A(t)$ is continuously differentiable with respect to the strong operator topology  on $B(W,H)$ ("strongly continuously differentiable").\\
The choice of norm for $W$ is somewhat arbitrary, but it doesn't matter, as the following lemma shows:
\begin{lemma}
\label{equinorm}
All $A(t)$ have uniformly equivalent graph norms. 
\end{lemma}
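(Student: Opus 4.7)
The plan is to establish local uniform equivalence of $\|\cdot\|_{A(s_0)}$ and $\|\cdot\|_{A(t)}$ on a neighborhood of each $s_0\in[0,T]$ via a perturbation argument, and then to patch these local results together by compactness of $[0,T]$.

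As a preliminary step, Lemma \ref{dbound} applied to the strongly continuous family $t\mapsto A(t)\in B(W,H)$ gives $M:=\sup_t\|A(t)\|_{W\to H}<\infty$, which yields the uniform one-sided bound $\|x\|_{A(t)}\le\sqrt{1+M^2}\,\|x\|_{A(0)}$ for all $t$ and all $x\in W$. Since every $A(s_0)$ is self-adjoint and hence closed, $(W,\|\cdot\|_{A(s_0)})$ is a Banach space; applying the open mapping theorem to the identity map $(W,\|\cdot\|_{A(0)})\to(W,\|\cdot\|_{A(s_0)})$, which is continuous by the bound just given, I obtain pointwise equivalence of $\|\cdot\|_{A(0)}$ and $\|\cdot\|_{A(s_0)}$ with some constant $K(s_0)<\infty$, though not yet uniformly in $s_0$.

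For the local uniform step at a fixed $s_0$, Lemma \ref{diffbound} asserts that $t\mapsto A(t)$ is norm continuous into $B(W,H)$, so $\|A(t)-A(s_0)\|_{(W,\|\cdot\|_{A(0)})\to H}\to 0$ as $t\to s_0$. Combining this with the pointwise equivalence $\|\cdot\|_{A(0)}\le K(s_0)\,\|\cdot\|_{A(s_0)}$ shows $\|A(t)-A(s_0)\|_{(W,\|\cdot\|_{A(s_0)})\to H}\to 0$ as $t\to s_0$. Pick a small fixed $\beta_0\in(0,1)$ and a neighborhood $U_{s_0}$ of $s_0$ on which this quantity is below $\beta_0$. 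Writing $A(t)=A(s_0)+B_{s_0}(t)$, expanding
$$\|A(t)x\|^2=\|A(s_0)x\|^2+2\operatorname{Re}\langle A(s_0)x,B_{s_0}(t)x\rangle+\|B_{s_0}(t)x\|^2,$$
and applying Young's inequality to the cross term, yields constants $c,C>0$ depending only on $\beta_0$ for which $c\,\|x\|_{A(s_0)}\le\|x\|_{A(t)}\le C\,\|x\|_{A(s_0)}$ for every $t\in U_{s_0}$ and every $x\in W$.

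By compactness of $[0,T]$, the open cover $\{U_{s_0}\}_{s_0\in[0,T]}$ admits a finite subcover $U_{s_0},\ldots,U_{s_n}$, and chaining the local uniform equivalences along this cover produces a single constant relating $\|\cdot\|_{A(s)}$ and $\|\cdot\|_{A(t)}$ for all $s,t\in[0,T]$. The main technical obstacle is that a direct iterative perturbation argument (e.g. one based on integrating $A'$ and propagating the bound step by step) would make the usable interval at each step shrink geometrically, because each step's equivalence constant enters the bound for the next step, so one would need infinitely many steps to cover $[0,T]$. The trick that avoids this is to use the closed graph theorem up front to convert the one-sided bound into a pointwise equivalence at each $s_0$; this is precisely what is needed to transport the norm continuity of $A$ from $B((W,\|\cdot\|_{A(0)}),H)$ to $B((W,\|\cdot\|_{A(s_0)}),H)$ at every point $s_0$, after which only a finite subcover is needed.
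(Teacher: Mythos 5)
Your argument is correct, but it takes a genuinely different route from the paper. The paper exploits self-adjointness twice: since $\<A(t)x,x\>$ is real, the graph norm satisfies $\norm{x}_{A(t)}=\norm{(A(t)+i)x}$, and $A(s)+i$ is invertible; writing $(A(t)+i)x=(A(t)+i)(A(s)+i)^{-1}(A(s)+i)x$, the uniform constant then falls out in one stroke, because $(A(t)+i)$ is uniformly bounded in $B(W,H)$ and $(A(s)+i)^{-1}$ is uniformly bounded in $B(H,W)$ (Lemma \ref{idiff} makes the resolvent family strongly continuous, then Lemma \ref{dbound} gives the bound), so the constant is independent of both $s$ and $t$ with no localization needed. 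You instead get the uniform one-sided bound from Lemma \ref{dbound}, upgrade it to a pointwise two-sided equivalence at each $s_0$ by the open mapping theorem, use norm continuity (Lemma \ref{diffbound}) plus a quadratic expansion with Young's inequality to make the equivalence locally uniform with constants depending only on the perturbation size, and finish by compactness; the patching works because at the finitely many centers $s_j$ you already hold the pointwise constants $K(s_j)$. Your version is longer but more elementary and slightly more general: it never uses the resolvent identity or invertibility of $A(s)+i$, so it applies to any norm-continuous family of closed operators with common domain, whereas the paper's one-line proof is tied to (and rewarded by) self-adjointness. Your closing remark about why a naive step-by-step propagation could fail (shrinking step sizes) is a fair observation and your fix --- pointwise equivalence first, compactness second --- addresses it correctly.
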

\begin{proof}
For any $s\in [0,T]$, $A(s)$ is self-adjoint, hence $A(s)+i$ is invertible. We have for $x\in W$
$$||x ||_{A(t)}=||(A(t)+i)x||\leq||(A(t)+i)(A(s)+i)^{-1}||||x||_{A(s)}$$
By lemmas \ref{dbound} and \ref{idiff}, $(A(t)+i)(A(s)+i)^{-1}$ is uniformly bounded, hence
$$||x ||_{A(t)}\leq C ||x ||_{A(s)},$$
with a constant $C$ independent of $s$ and $t$.
\end{proof}

For $s<t\in [0,T]$, let $D_0|_{[s,t]}$ denote the operator 
$$D_0|_{[s,t]}:C^1([s,t],W)\rightarrow L^2([s,t],H)$$
$$(D_0|_{[s,t]}f)(r):=f'(r)-iA(r)f(r).$$
\begin{lemma}
$D_0|_{[s,t]}$ is closable in $L^2([s,t],H)$.
\end{lemma}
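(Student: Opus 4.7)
The plan is to show that the Hilbert space adjoint $(D_0|_{[s,t]})^*$ is densely defined, which by a standard functional analytic result is equivalent to closability of $D_0|_{[s,t]}$.

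First I identify a subspace sitting inside $\Dom((D_0|_{[s,t]})^*)$. Let $D^\dagger := \{\phi\in C^1([s,t],W)\,|\,\phi(s)=\phi(t)=0\}$ and set $L\phi := -\phi' + iA\phi$. For $f\in C^1([s,t],W) = \Dom(D_0|_{[s,t]})$ and $\phi\in D^\dagger$, I would use integration by parts (the fundamental theorem of calculus for Bochner integrals applied to $r\mapsto \<f(r),\phi(r)\>$, together with the pointwise self-adjointness of $A(r)$) to compute
\begin{align*}
\<D_0 f,\phi\>_{L^2} &= \int_s^t\<f'(r),\phi(r)\>dr - i\int_s^t\<A(r)f(r),\phi(r)\>dr\\
&= \left[\<f(r),\phi(r)\>\right]_s^t - \int_s^t\<f(r),\phi'(r)\>dr + \int_s^t\<f(r),iA(r)\phi(r)\>dr\\
&= \<f,L\phi\>_{L^2},
\end{align*}
where the boundary term vanishes by definition of $D^\dagger$. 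This shows $D^\dagger\subseteq \Dom((D_0|_{[s,t]})^*)$ with $(D_0|_{[s,t]})^*\phi = L\phi$ on $D^\dagger$.

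Next I need $D^\dagger$ to be dense in $L^2([s,t],H)$. Finite sums $\sum_n\phi_n w_n$ with $\phi_n\in C_c^\infty((s,t),\R)$ and $w_n\in W$ lie in $D^\dagger$ (since $\phi_n$ vanishes at both endpoints). By Lemma \ref{Sdense} applied with such $\phi_n$ — using that $C_c^\infty((s,t),\R)$ is dense in $L^2([s,t],\R)$ — together with the density of $W$ in $H$, these sums are dense in $L^2([s,t],H)$. Hence $\Dom((D_0|_{[s,t]})^*)$ is dense and $D_0|_{[s,t]}$ is closable.

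The only nontrivial point is the integration-by-parts identity: one must justify that $r\mapsto\<f(r),\phi(r)\>$ is continuously differentiable with derivative $\<f'(r),\phi(r)\>+\<f(r),\phi'(r)\>$, so that the fundamental theorem applies. This follows from joint continuity of the scalar product $H\times H\to\C$ applied to the differentiable paths $f,\phi\in C^1([s,t],H)$ (noting $W\hookrightarrow H$ is continuous). Everything else is routine, so this is not really an obstacle.
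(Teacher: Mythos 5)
Your proposal is correct and follows essentially the same route as the paper: you show the adjoint is densely defined by integrating by parts against $C^1([s,t],W)$ functions vanishing at both endpoints (using self-adjointness of $A(r)$), then verify density of these test functions, which is exactly the paper's argument with the density step spelled out via Lemma \ref{Sdense}. No issues.
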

\begin{proof}
For $f\in C^1([s,t],W)$ and $g\in C^1([s,t],W)$ with $g(s)=g(t)=0$, we have $$\<g,D_0|_{[s,t]}f\>_{L^2}=\<-D_0|_{[s,t]}g,f\>_{L^2}$$ by partial integration and self-adjointness of $A$. As the space of all such $g$ is dense, the adjoint of $D_0|_{[s,t]}$ is densely defined. This implies that $D_0|_{[s,t]}$ is closable.
\end{proof}
Let $D|_{[s,t]}$ denote the closure of $D_0|_{[s,t]}$, let $D:=D|_{[0,T]}$.

\subsection{The Spectral Flow}
Heuristically, the spectral flow of $A$ gives the number of eigenvalues of $A(t)$ (counted with multiplicities) crossing 0 as $t$ varies from $0$ to $T$, i.e. the number of negative eigenvalues becoming positive minus that of positive eigenvalues becoming negative. The approach and definition used are from [PH]. The results of this section are formulated for $A$, but everything works for any family of self-adjoint Fredholm operators that is norm continuous in $B(W,H)$. Note that $A$ is norm continuous by Lemma \ref{diffbound}.

\begin{df}
For an Interval $I\subset \R$ and $t\in[0,T]$, let
$$P_I(t):= \chi_{I}(A(t))$$
be the corresponding spectral Projection for A(t). Here $\chi_I$ denotes the characteristic function of $I$.
Let 
$$H_I(t):=\Ran(P_I)$$
be the corresponding spectral subspace.
For $a\in \R$, let
$$P_{<a}:=P_{(-\infty,a)}$$
\end{df}
We will need a theorem about how the spectrum and the spectral projections behave under continuous changes of the the operator. The following is an immediate consequence of Theorem 5.12 in chapter 6 of [KA]:
\begin{thm}
\label{kat}
For $t\in [0,T]$ and $a\in \R\backslash \spec(A(t))$, there is a neighborhood $U$ of $t$, such that for all $s\in U$ we have $a\notin \spec(A(s))$ and the spectral projection $P_{<a}(s)$ is norm continuous on $U$ (as a function of $s$).
\end{thm}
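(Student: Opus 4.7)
The plan is to establish the theorem as a consequence of the norm continuity of $A$ in $B(W,H)$, which follows from strong continuous differentiability by Lemma \ref{diffbound}. The proof splits into two parts: persistence of the spectral gap at $a$ for $s$ near $t$, and norm continuity of the associated spectral projection.

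For the first part, set $\delta := \mathrm{dist}(a, \spec(A(t))) > 0$. Self-adjointness of $A(t)$ yields that $(A(t) - a)^{-1}$ is bounded, with $\norm{(A(t) - a)^{-1}}_{H \rightarrow H} \leq 1/\delta$ and with bounded $H \rightarrow W$ norm (using $A(t)(A(t) - a)^{-1} = I + a(A(t) - a)^{-1}$ together with Lemma \ref{equinorm}). Write
$$A(s) - a = \bigl[I + (A(s) - A(t))(A(t) - a)^{-1}\bigr](A(t) - a)$$
as operators $W \rightarrow H$. The bracketed factor acts on $H$, and its perturbation has norm at most $\norm{A(s) - A(t)}_{W \rightarrow H} \cdot \norm{(A(t) - a)^{-1}}_{H \rightarrow W}$, which vanishes as $s \rightarrow t$ by Lemma \ref{diffbound}. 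Hence on a neighborhood $U$ of $t$ the bracket is invertible on $H$ by Neumann series, so $a \notin \spec(A(s))$ for $s \in U$ and
$$(A(s) - a)^{-1} = (A(t) - a)^{-1}\bigl[I + (A(s) - A(t))(A(t) - a)^{-1}\bigr]^{-1}$$
is norm continuous in $s$. Applying the same argument uniformly to $z$ in a small closed disk around $a$ produces $\epsilon > 0$ such that $[a - \epsilon, a + \epsilon] \cap \spec(A(s)) = \emptyset$ for all $s \in U$.

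For the second part, the norm-resolvent continuity of $(A(s) - a)^{-1}$ and the uniform spectral gap at $a$ established above are exactly the hypotheses of Kato's Theorem 5.12, Chapter 6 of [KA], which states that the spectral projection onto an isolated piece of the spectrum of a self-adjoint operator varies norm-continuously under such perturbations. The main obstacle, and the reason this is a nontrivial invocation rather than an immediate consequence of continuous functional calculus, is that $\spec(A(s))$ is generically unbounded, so there is no bounded contour in $\mathbb{C}$ separating the two halves of the spectrum; equivalently, if one passes to the bounded transform $(A(s) - a)^{-1}$, the function $\chi_{(-\infty, 0)}$ has a discontinuity at $0$, which is an accumulation point of the spectrum. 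Kato's argument exploits self-adjointness and the uniformity of the spectral gap to make the corresponding unbounded-contour integral converge in norm; here we simply cite the result.
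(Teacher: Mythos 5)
Your proposal is correct and takes essentially the same approach as the paper: the paper gives no argument beyond declaring the statement an immediate consequence of Theorem 5.12 in Chapter 6 of [KA], which is exactly the citation you rely on for the norm continuity of the spectral projection. Your added Neumann-series step showing that the gap at $a$ persists and that $(A(s)-a)^{-1}$ is norm continuous (using Lemmas \ref{diffbound} and \ref{equinorm}) is precisely the verification of the hypotheses of Kato's theorem that the paper leaves implicit, so it fills in rather than replaces the paper's reasoning.
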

To define the spectral flow, we will need  finite dimensional spectral subspaces of the form $H_{[0,a)}$. To get those, we need to show that the spectrum of $A(t)$ is discrete in a neighborhood of $0$. We will show this first at every point $t$ and then for an open cover.
\begin{lemma}
\label{pointdisc}
For every $t\in [0,T]$, there is $a>0$ such that $\spec(A(t))\cap[-a,a]\subseteq \{0\}$. In particular $P_{[-a,a]}(t)$ has finite rank.
\end{lemma}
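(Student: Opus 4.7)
The plan is to show by contradiction that the spectrum of $A(t)$ cannot accumulate at $0$, using self-adjointness and Fredholmness to produce a quantitative lower bound for $A(t)$ off its kernel. Suppose no such $a>0$ exists, so that for every $n\in\N$ there is $\lambda_n\in\spec(A(t))\cap\bigl([-1/n,1/n]\setminus\{0\}\bigr)$. Passing to a subsequence I may assume the $\lambda_n$ are distinct, and I pick mutually disjoint open intervals $J_n$ around $\lambda_n$ with $0\notin\overline{J_n}$ and $\sup_{\lambda\in J_n}|\lambda|\to 0$. Since $\lambda_n\in\spec(A(t))$, each spectral projection $P_{J_n}(t)$ is non-zero, so I pick a unit vector $x_n\in \Ran P_{J_n}(t)$; disjointness of the $J_n$ makes $(x_n)$ orthonormal. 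The functional calculus gives
$$\norm{A(t)x_n}\leq\sup_{\lambda\in J_n}|\lambda|\to 0,$$
and since $0\notin J_n$ each $x_n$ is orthogonal to $\Ran P_{\{0\}}(t)=\Ker(A(t))$.

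Now I invoke the Fredholm hypothesis. $A(t)$ has closed range, and self-adjointness (together with the first lemma of Section~3 stating $A(t)\in B(W,H)$) gives $\Ran(A(t))=\Ker(A(t))^\bot$ in $H$. Thus the restriction
$$[A(t): W\cap \Ker(A(t))^\bot\rightarrow \Ker(A(t))^\bot]$$
is a bounded bijection between Banach spaces (the domain is closed in $W$ because the inclusion $W\hookrightarrow H$ is continuous and $\Ker(A(t))^\bot$ is closed in $H$). By the bounded inverse theorem it has a bounded inverse, and since $\norm{\cdot}\leq\norm{\cdot}_W$, there is $C>0$ with $\norm{x}\leq C\norm{A(t)x}$ for all $x\in W\cap\Ker(A(t))^\bot$. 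Applied to the sequence above this yields $1=\norm{x_n}\leq C\norm{A(t)x_n}\to 0$, a contradiction.

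Hence there is $a>0$ with $\spec(A(t))\cap[-a,a]\subseteq\{0\}$, and then $P_{[-a,a]}(t)=P_{\{0\}}(t)$ is the projection onto $\Ker(A(t))$, which is finite-dimensional by Fredholmness. I do not expect any serious obstacle; the only subtlety is keeping track of the two norms and confirming that $W\cap\Ker(A(t))^\bot$ is a Banach space in the $W$-norm so that the bounded inverse theorem applies.
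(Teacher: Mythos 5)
Your proof is correct, and its core is the same as the paper's: both use that self-adjointness gives $\Ran(A(t))^\bot=\Ker(A(t))$ and Fredholmness gives closed range, so that the restriction of $A(t)$ to $W\cap\Ker(A(t))^\bot\rightarrow\Ker(A(t))^\bot$ (which is exactly the paper's $A(t)_r$, since $\Ran(A(t))=\Ker(A(t))^\bot$) is a bijection. The difference lies only in the endgame. The paper argues directly: the spectrum of $A(t)_r$ is closed and does not contain $0$, so there is a gap $[-a,a]$, and then for $0<|\lambda|\leq a$ the operator $A(t)+\lambda$ splits as an invertible direct sum over $\Ran(A(t))\oplus\Ker(A(t))$. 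You instead turn invertibility into the coercivity estimate $\|x\|\leq C\|A(t)x\|$ on $W\cap\Ker(A(t))^\bot$ via the bounded inverse theorem and contradict it with unit vectors in spectral subspaces of shrinking intervals avoiding $0$; this avoids discussing the spectrum of the restricted operator altogether and gives a quantitative lower bound (one could read off an explicit admissible $a$ of order $1/C$), at the cost of the bookkeeping with the intervals $J_n$. Two small points you should make explicit: surjectivity of the restriction needs the observation that a preimage of $y\in\Ran(A(t))$ can be corrected by an element of $\Ker(A(t))\subseteq W$ so as to lie in $W\cap\Ker(A(t))^\bot$; and your vectors $x_n\in\Ran P_{J_n}(t)$ lie in $W=\Dom(A(t))$ because the $J_n$ are bounded intervals, which is what licenses the functional-calculus estimate $\|A(t)x_n\|\leq\sup_{\lambda\in J_n}|\lambda|$. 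Both are routine and do not affect the validity of the argument.
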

\begin{proof}
Some $x\in H$ is in  $\Ran(A(t))^\bot$ if and only if for all $y\in W$,
$$\<x,A(t)y\>=0.$$
By definition of the adjoint, this is true if and only if $x$ is in the domain of $A(t)^*$ and 
$$A(t)^*x=0.$$
Thus
$$\Ran(A(t))^\bot=\Ker(A(t)^*)=\Ker(A(t)).$$
As $A(t)$ is Fredholm, it has closed range, so we get that the restriction of $A(t)$ to
$$A(t)_r:=[A(t):W\cap \Ran(A(t))\rightarrow \Ran(A(t))]$$
produces a surjective map with kernel 0, i.e. an invertible map. As the spectrum of $A(t)_r$ is closed and does not contain 0, there is $a> 0$, such that 
$$\spec(A(t)_r)\cap [-a,a]=\emptyset$$
Using the splitting $H=\Ran(A)\oplus \Ker(A)$, we obtain that for non-zero $|\lambda|\leq a$, the operator
$$A(t)+\lambda=(A(t)_r+\lambda)\oplus\lambda id_{\Ker(A(t))}$$
is invertible, so $\lambda\notin \spec(A(t))$, showing the first claim. For the second claim, note that 
$$P_{[-a,a]}(t)=P_{\{0\}}(t)$$
is the projection onto $\Ker(A(T))$, which is finite dimensional since $A(t)$ is Fredholm.
\end{proof}
\begin{lemma}
\label{constrank}
If $J\subseteq \R$ is an interval and $P:J\rightarrow B(H)$ is a continuous family of projections, the rank of $P$ is constant as a map from $J$ into $\N\cup\{\infty\}$.
\end{lemma}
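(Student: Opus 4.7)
The plan is to establish the standard fact that nearby projections (in operator norm) have isomorphic ranges, upgrade this to local constancy of the rank along $P$, and then invoke connectedness of the interval $J$.

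The key algebraic step is the following: for projections $P,Q\in B(H)$ with $\norm{P-Q}<1$, set $V:=PQ+(I-P)(I-Q)$ and $W:=QP+(I-Q)(I-P)$. A direct calculation, using $(P-Q)^2=P+Q-PQ-QP$, shows that
$$VW=WV=I-(P-Q)^2,$$
and since $\norm{(P-Q)^2}\leq\norm{P-Q}^2<1$, the right-hand side is invertible by Neumann series. Hence $V$ and $W$ are invertible in $B(H)$. From $VQ=PQ=PV$ one sees that $V$ maps $\Ran(Q)$ into $\Ran(P)$, and the analogous identity $WP=QP=QW$ shows that $W$ maps $\Ran(P)$ into $\Ran(Q)$. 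Since $V,W$ are inverse to each other up to the invertible factor $I-(P-Q)^2$, they restrict to mutually inverse isomorphisms between $\Ran(P)$ and $\Ran(Q)$, so these subspaces have the same (possibly infinite) dimension.

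With this lemma in hand, the main argument is short. Given $t_0\in J$, continuity of $P$ yields a neighborhood $U\subseteq J$ of $t_0$ on which $\norm{P(t)-P(t_0)}<1$, and the previous paragraph gives $\Dim\Ran(P(t))=\Dim\Ran(P(t_0))$ for $t\in U$. Thus the function $t\mapsto\Dim\Ran(P(t))$ from $J$ to $\N\cup\{\infty\}$ is locally constant, and connectedness of the interval $J$ forces it to be constant.

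The only genuinely technical step is verifying the identity $VW=I-(P-Q)^2$; once this is written out it is routine, and I expect no deeper obstacle. A minor point worth a sentence of justification is that the isomorphism argument works equally well when the common rank is infinite, so the conclusion really does take values in $\N\cup\{\infty\}$ as stated. Note also that the only regularity of $P$ used is norm continuity, which is what is meant by "continuous family of projections" here (consistent with the use of norm continuity in Theorem \ref{kat}).
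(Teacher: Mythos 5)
Your proof is correct, but it follows a different route than the paper. You use the standard "nearby projections are equivalent" argument: the operators $V=PQ+(I-P)(I-Q)$ and $W=QP+(I-Q)(I-P)$ satisfy $VW=WV=I-(P-Q)^2$, which is invertible when $\norm{P-Q}<1$, and the intertwining relations $VQ=PQ=PV$, $WP=QP=QW$ then produce a linear isomorphism $\Ran(Q)\cong\Ran(P)$; local constancy of the rank plus connectedness of $J$ finishes the proof. The identity $VW=I-(P-Q)^2$ is easily verified, and the one small point you should make explicit is that $(P-Q)^2$ commutes with both $P$ and $Q$, so that $I-(P-Q)^2$ and its inverse preserve $\Ran(P)$ and $\Ran(Q)$; this is what makes $V|_{\Ran(Q)}$ both injective and surjective onto $\Ran(P)$ (they are not literally mutually inverse, only inverse up to this invertible factor). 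The paper instead argues by contradiction with a purely geometric observation: if the ranks of $P(s)$ and $P(t)$ differed, the smaller one would be finite, so $P(t)\Ran(P(s))$ would be a closed proper subspace of $\Ran(P(t))$, and a unit vector $x\in\Ran(P(t))$ orthogonal to it lies in $\Ker(P(s))$, forcing $\norm{P(s)x-P(t)x}=\norm{x}$ and contradicting $\norm{P(s)-P(t)}<1$. Your approach requires the algebraic identity but yields more, namely an explicit isomorphism (indeed equivalence) of the ranges, uniformly in the finite and infinite rank cases; the paper's argument is more elementary and handles the infinite case by noting that only the smaller rank needs to be finite. Both correctly use norm continuity, which is indeed the hypothesis under which the lemma is applied via Theorem \ref{kat}.
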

\begin{proof}
Let $s,t\in J$ be close enough such that $||P(s)-P(t)||<1$.
Assume for contradiction that $P(s)$ and $P(t)$ do not have the same rank. Without loss of generality, assume that $P(s)$ has the smaller rank.
Then $P(t)(\Ran(P(s)))$is a proper subspace of $(\Ran(P(t))$. It is finite dimensional and hence closed, so its complement is non-zero.
Thus there is a non-zero $x\in \Ran(P(t))$ such that for all $y\in \Ran(P(s))$, we have
$$0=\<x,P(t)y\>=\<P(t)x,y\>=\<x,y\>.$$
This means 
$$x\in \Ran(P(s))^\bot = \Ker(P(s)),$$
whence
$$||P(s)x-P(t)x||=||x||,$$
contradicting $||P(s)-P(t)||<1$.
Thus $P(s)$ and $P(t)$ have the same rank for all $s$ in some neighborhood of any $t\in [0,T]$, i.e the rank of $P$ is locally constant. As $J$ is connected, the rank of $P$ is constant.
\end{proof}

\begin{lemma}
\label{Udisc}
For every $t\in[0,T]$, there is a neighborhood $U$ of $t$ and $a>0$ such that $P_{[-a,a]}(s)$ has finite rank and $\pm a\notin \spec(A(s))$ for all $s\in U$.
\end{lemma}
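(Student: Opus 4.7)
The plan is to combine three earlier ingredients: Lemma \ref{pointdisc} gives the desired property at the single point $t$, Theorem \ref{kat} propagates gap conditions and norm continuity of the spectral projection $P_{<\lambda}$ to a neighborhood, and Lemma \ref{constrank} transfers finiteness of rank from the point $t$ to all of that neighborhood.

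First I would fix $t\in[0,T]$ and invoke Lemma \ref{pointdisc} to choose $a_0>0$ with $\spec(A(t))\cap[-a_0,a_0]\subseteq\{0\}$. Any $a\in(0,a_0)$ then satisfies $\pm a\notin\spec(A(t))$, so Theorem \ref{kat} applies at both $+a$ and $-a$. Intersecting the two neighborhoods it produces, I obtain a neighborhood $U$ of $t$ on which $\pm a\notin\spec(A(s))$ for all $s\in U$ and on which both $s\mapsto P_{<a}(s)$ and $s\mapsto P_{<-a}(s)$ are norm continuous.

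Next I would express $P_{[-a,a]}$ in terms of these two projections. Because $\pm a$ lie outside $\spec(A(s))$ for $s\in U$, the spectral measure of $A(s)$ has no mass at $\{-a\}$ or $\{a\}$, so
$$P_{[-a,a]}(s)=P_{<a}(s)-P_{<-a}(s),$$
and the right-hand side is norm continuous on $U$. Lemma \ref{constrank} then forces the rank of $P_{[-a,a]}(s)$ to be constant on $U$. At $s=t$ this rank coincides with the already established finite rank of $P_{[-a,a]}(t)$ from Lemma \ref{pointdisc} (which there equals $\dim\Ker(A(t))$, finite by the Fredholm assumption). Hence $P_{[-a,a]}(s)$ has finite rank for every $s\in U$, completing the proof.

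The argument is entirely routine; the only small care required is to choose $a$ strictly inside the spectral gap at $t$ so that Theorem \ref{kat}'s norm-continuity hypothesis applies at both endpoints, and to observe that $P_{[-a,a]}=P_{<a}-P_{<-a}$ as soon as the endpoints are not eigenvalues. I do not anticipate a real obstacle.
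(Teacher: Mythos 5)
Your argument is correct and follows essentially the same route as the paper: Lemma \ref{pointdisc} at the point $t$, Theorem \ref{kat} to keep $\pm a$ out of the spectrum and make $P_{[-a,a]}=P_{<a}-P_{<-a}$ norm continuous on a neighborhood, and Lemma \ref{constrank} to propagate the finite rank. The extra care you take in shrinking $a$ and in justifying the identity $P_{[-a,a]}=P_{<a}-P_{<-a}$ is fine but not a substantive difference.
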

\begin{proof}
Choose $a$ as in Lemma \ref{pointdisc}. Then $P_{[-a,a]}(t)$ has finite rank and $\pm a\notin \spec(A(t))$. By Theorem \ref{kat}, there is a neighborhood $U$ of $t$ such that $\pm a\notin \spec(A(s))$ for $s\in U$ and
$$P_{[-a,a]}=P_{<a}-P_{<-a}$$ is continuous on $U$. By Lemma \ref{constrank}, this implies that $P_{[-a,a]}$ has constant, and thus finite, rank on $U$.
\end{proof}

For defining the spectral flow, we will need suitable partitions.
\begin{df}
A partition 
$$0=t_0<t_1<...<t_N=T$$
with numbers $a_n\in \Rp$ for $1\leq n\leq N$ will be called a flow partition (for $A$), if  
\item for all $n\leq N$ and $t\in [t_{n-1},t_n]$ we have $a_n\notin \spec(A(t))$ and $H_{[0,a_n)}(t)$ is finite dimensional.
\end{df}

\begin{thm}
\label{flowpart}
There is a flow partition for $A$. This can be chosen in such a way that it is also a flow partition for $-A$.
\end{thm}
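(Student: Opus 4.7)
The plan is a standard compactness argument based on Lemma \ref{Udisc}. For every $t\in[0,T]$, Lemma \ref{Udisc} produces an open neighborhood $U_t\subseteq[0,T]$ and a number $a_t>0$ such that $\pm a_t\notin\spec(A(s))$ and $P_{[-a_t,a_t]}(s)$ has finite rank for all $s\in U_t$. The collection $\{U_t\}_{t\in[0,T]}$ is an open cover of the compact interval $[0,T]$, so I would extract a finite subcover $U_{s_1},\ldots,U_{s_M}$ with associated numbers $a_{s_1},\ldots,a_{s_M}$.

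Next I would produce the partition $0=t_0<t_1<\cdots<t_N=T$ by a routine interval-surgery step: starting from $0$, at each stage pick some $U_{s_k}$ containing the current endpoint, extend $t_{n}$ to be the right endpoint of $U_{s_k}\cap[0,T]$ (or to $T$ if that exceeds $T$), and assign $a_n:=a_{s_k}$. Since the $U_{s_k}$ cover $[0,T]$ and are finitely many, this process terminates after finitely many steps and yields a partition with the property that each $[t_{n-1},t_n]$ is contained in one of the chosen $U_{s_k}$, with corresponding value $a_n=a_{s_k}$.

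To verify the flow partition conditions, fix $n$ and $t\in[t_{n-1},t_n]$. By construction $t\in U_{s_k}$, hence $a_n=a_{s_k}\notin\spec(A(t))$. Moreover $H_{[0,a_n)}(t)\subseteq H_{[-a_n,a_n]}(t)=\Ran(P_{[-a_n,a_n]}(t))$, which is finite-dimensional by the defining property of $U_{s_k}$. Thus $(t_n,a_n)$ is a flow partition for $A$.

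Finally, for the same partition to be a flow partition for $-A$, note that $\spec(-A(t))=-\spec(A(t))$ and that the spectral projection of $-A(t)$ for $[0,a_n)$ equals the spectral projection of $A(t)$ for $(-a_n,0]$, which is a subprojection of $P_{[-a_n,a_n]}(t)$ and therefore also of finite rank. The condition $a_n\notin\spec(-A(t))$ becomes $-a_n\notin\spec(A(t))$, which is already included in Lemma \ref{Udisc}. Hence no modification is needed: the same partition serves both $A$ and $-A$. The only mildly delicate point is the construction of the partition itself, but this is purely combinatorial and does not require any analytic input beyond Lemma \ref{Udisc}.
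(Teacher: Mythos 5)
Your overall strategy is the same as the paper's: cover $[0,T]$ by the neighborhoods supplied by Lemma \ref{Udisc}, use compactness to reduce to finitely many of them, and read off a partition; the treatment of $-A$ (namely that Lemma \ref{Udisc} already excludes $-a$ from the spectrum, and that $\chi_{[0,a_n)}(-A(t))=\chi_{(-a_n,0]}(A(t))$ is a subprojection of $P_{[-a_n,a_n]}(t)$ and hence of finite rank) is correct and matches the paper. The gap is in your interval-surgery step. If you take $t_n$ to be the right endpoint of $U_{s_k}\cap[0,T]$, then in general $t_n\notin U_{s_k}$, since $U_{s_k}$ is open; so the closed interval $[t_{n-1},t_n]$ is \emph{not} contained in $U_{s_k}$, and your verification fails exactly at $t=t_n$. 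This is not cosmetic: the flow-partition condition is required on the closed interval, and nothing in Lemma \ref{Udisc} or Theorem \ref{kat} prevents an eigenvalue of $A(s)$ from tending to $a_{s_k}$ as $s\to t_n^-$ and equalling it at $s=t_n$ (the condition $a\notin\spec(A(s))$ is open in $s$, not closed), so with your choice of $t_n$ one can indeed have $a_n\in\spec(A(t_n))$. A secondary point: you should also arrange the $U_t$ to be intervals, since for a disconnected open set the right endpoint of $U_{s_k}\cap[0,T]$ need not be approachable from $t_{n-1}$ within $U_{s_k}$ at all.

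The repair is easy and does not change the spirit of your argument: either choose $t_n$ strictly inside $U_{s_k}$, at a point lying in the overlap with the set you will use next, or do as the paper does and invoke the Lebesgue number lemma to obtain $N$ such that every interval of length less than $T/N$ lies in some $U_t$, then take the equidistant partition $t_n=nT/N$ and set $a_n:=b_t$ for a $t$ with $[t_{n-1},t_n]\subseteq U_t$. With such a choice each closed subinterval genuinely lies in a single covering set, and the rest of your verification goes through unchanged.
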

\begin{proof}
For every $t\in [0,T]$, choose a neighborhood $U_t$ with $b_t>0$, such that $P_{[-b_t,b_t]}(s)$ has finite range and $\pm b_t\notin \spec(A(s))$ for all $s\in U_t$ (using Lemma \ref{Udisc}).
By Lebesgue's number lemma, there is $N\in\N$ such that every interval of length less that $\frac{1}{N}$ is contained in one of the $U_t$. Let $t_n:=\frac{nT}{N}$.
Let $a_n:=b_t$ for some $t$ such that $[t_{n-1},t_n]\subseteq U_t$. Then the $t_n$ and $a_n$ form a flow partition. As we also have $-a_n\notin \spec(A(t))$ for $t\in[t_{n-1},t_n]$,  the $t_n$ and $a_n$ also form a flow partition for $-A$.
\end{proof}

\begin{df}
If $(t_n)_{0\leq n\leq N},(a_n)_{1\leq n\leq N}$ is a flow partition for $A(t)$, the spectral flow is defined as 
$$\sfl(A)=\sum\limits_{n=1}^N \Dim(H_{[0,a_n)}(t_n))-\Dim(H_{[0,a_n)}(t_{n-1}))$$
\end{df}
The intuition behind this is that, since $a_n$ is a spectral gap, all eigenvalues entering or leaving $[0,a_n)$ must do so through 0. The following theorem shows that the spectral flow is well-defined.
\begin{thm}
The spectral flow is independent of the choice of flow partition.
\end{thm}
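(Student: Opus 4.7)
The plan is to prove invariance of $\sfl(A)$ by two moves that together connect any two flow partitions: time-refinement (inserting break-points while keeping the gap values) and gap-value change (replacing one $a_n$ by another admissible choice on a sub-interval). Any two flow partitions will then be bridged by first refining both to a common time-partition and then adjusting the gap values sub-interval by sub-interval.

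First I would observe the telescoping identity for time-refinement: if one inserts an extra point $t^*$ with $t_{n-1}<t^*<t_n$ and assigns the value $a_n$ to both new sub-intervals, the flow-partition conditions are preserved (they depend only on $[t_{n-1},t_n]$), and the two new summands telescope to the original summand $\Dim(H_{[0,a_n)}(t_n))-\Dim(H_{[0,a_n)}(t_{n-1}))$. Iterating, inserting finitely many time-points leaves the defining sum unchanged.

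The second, and main, step is gap-value invariance on a fixed sub-interval $[t_{n-1},t_n]$: if $a,a'>0$ are values such that for every $t\in[t_{n-1},t_n]$ neither lies in $\spec(A(t))$ and $H_{[0,a)}(t), H_{[0,a')}(t)$ are finite dimensional, then the summand does not depend on whether one uses $a$ or $a'$. Assuming without loss of generality $a\le a'$, write $P_{[a,a')}(t)=P_{<a'}(t)-P_{<a}(t)$. Theorem \ref{kat}, applied at each $t\in[t_{n-1},t_n]$, provides a neighborhood on which $P_{<a}$ and $P_{<a'}$ are both norm-continuous; by compactness of $[t_{n-1},t_n]$, they are norm-continuous on the whole sub-interval, and hence so is $P_{[a,a')}$. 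This is a continuous family of projections of finite rank $\Dim(H_{[0,a')}(t))-\Dim(H_{[0,a)}(t))$, which by Lemma \ref{constrank} must be constant; evaluating at the two endpoints yields the desired equality.

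To conclude, given two flow partitions I would form a common time-refinement containing all breakpoints of both. By the first step, each original partition gives the same value of $\sfl(A)$ as when its own $a$-values are used on the common refinement. On each sub-interval of the refinement, both originally chosen gap values are admissible (the flow-partition conditions descend to sub-intervals), so the second step lets me transform one assignment of gap values into the other sub-interval by sub-interval, leaving the sum invariant. The main obstacle is precisely the gap-invariance step: it is the only place where genuine spectral information enters, and one has to promote the local norm-continuity supplied by Theorem \ref{kat} to continuity on the whole closed sub-interval before Lemma \ref{constrank} can be invoked on the difference projection.
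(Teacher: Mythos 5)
Your proposal is correct and follows essentially the same route as the paper: telescoping under time-refinement to reach a common time-partition, then using Theorem \ref{kat} together with Lemma \ref{constrank} to show that the finite-rank difference projection $P_{[a_n,a'_n)}$ has constant rank on each sub-interval, which makes the summands agree. No substantive differences to report.
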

\begin{proof}
For a flow partition $(t_n),(a_n)$, define $\alpha(t):=a_n$ for $t\in[t_{n-1}, t_n]$. As
\begin{align*}
&\Dim(H_{[0,a_n)}(t_n))-\Dim(H_{[0,a_n)}(t_{n-1}))\\
&=\Dim(H_{[0,a_n)}(t_n))-\Dim(H_{[0,a_n)}(\tau))+\Dim(H_{[0,a_n)}(\tau))-\Dim(H_{[0,a_n)}(t_{n-1})),
\end{align*}
inserting an additional point $\tau$ in the flow partition while choosing the $a_n$ such that $\alpha$ remains unchanged does not change the spectral flow. Thus any two flow partitions can be refined to have the same $(t_n)$ without changing their spectral flow.

Assume $(t_n),(a'_n)$ is another partition with the same $(t_n)$ as the first. Let $n\leq N$ and assume w.l.o.g. $a'_n\geq a_n$ (otherwise interchange $a_n$ and $a'_n$ in the following argument). The projection
$$P_{[a_n,a'_n)}=P_{<a'_n}-P_{<a_n}$$
has constant rank on $[t_{n-1},t_n]$ by Theorem \ref{kat} and Lemma \ref{constrank}. Thus we have
\begin{align*}
&\Dim(H_{[0,a'_n)}(t_n))-\Dim(H_{[0,a'_n)}(t_{n-1}))\\
&=\Dim(H_{[0,a_n)}(t_n))+\Dim(H_{[a_n,a'_n)}(t_n))-\Dim(H_{[0,a_n)}(t_{n-1}))-\Dim(H_{[a_n,a'_n)}(t_{n-1}))\\
&=\Dim(H_{[0,a_n)}(t_n))-\Dim(H_{[0,a_n)}(t_{n-1})).
\end{align*}
As this equality holds for all $n$, changing the $a_n$ does not change the spectral flow either, so it is the same for all flow partitions.
\end{proof}
From the definition it is obvious that the spectral flow is additive in the sense that for $r<s<t\in [0,T]$, we have
$$\sfl(A|_{[r,t]})=\sfl(A|_{[r,s]})+\sfl(A|_{[s,t]}).$$
The spectral flow is antisymmetric under sign change, up to counting the kernel twice:
\begin{lemma}
\label{minflow}
If $(t_n)$ is a partition with $a_n\in \Rp$ such that $-a_n\notin \spec(A(t))$ for all $t\in[t_{n-1},t_n]$ (in other words, $(t_n)$,$(a_n)$ is a flow partition for $-A$), then the spectral flow of $A$ is
$$\sfl(A)=\sum\limits_{n=1}^N \Dim(H_{[-a_n,0)}(t_{n-1}))-\Dim(H_{[-a_n,0)}(t_{n})),$$
or equivalently
$$\sfl(A)=-\sfl(-A)+\Dim\Ker(A(T))-\Dim\Ker(A(0)).$$
\end{lemma}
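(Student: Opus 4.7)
The plan is to prove the second (``equivalently'') formula first and then derive the first from it by applying the definition of $\sfl(-A)$ to the given partition. The key algebraic tools are the identity $H_{[0,a)}(-A(t))=H_{(-a,0]}(A(t))$, obtained from negating eigenvalues, together with the observation that $H_{[0,\cdot)}$ contains $\Ker A(t)$ whereas $H_{[-\cdot,0)}$ does not.

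For the equivalent formula, I would use Theorem \ref{flowpart} to choose a single partition $(s_n),(b_n)$ that is simultaneously a flow partition for $A$ and for $-A$. On each subinterval $[s_{n-1},s_n]$ the points $\pm b_n$ avoid $\spec(A(t))$, so Theorem \ref{kat} makes $P_{<b_n}$ and $P_{<-b_n}$ norm continuous, hence $P_{[-b_n,b_n]}=P_{<b_n}-P_{<-b_n}$ is norm continuous, and Lemma \ref{constrank} gives it a constant rank $d_n$. The orthogonal decomposition $H_{[-b_n,b_n]}(t)=H_{[-b_n,0)}(t)\oplus H_{[0,b_n)}(t)$ yields
$$\Dim H_{[0,b_n)}(A(t))+\Dim H_{[-b_n,0)}(A(t))=d_n.$$
Negation of eigenvalues together with $b_n\notin\spec(-A(t))$ gives $\Dim H_{[-b_n,0)}(A(t))=\Dim H_{(0,b_n)}(-A(t))$, while $H_{[0,b_n)}(-A(t))=\Ker A(t)\oplus H_{(0,b_n)}(-A(t))$, so
$$\Dim H_{[0,b_n)}(A(t))+\Dim H_{[0,b_n)}(-A(t))-\Dim\Ker A(t)=d_n.$$
Taking the difference of this equation at $s_n$ and $s_{n-1}$ and summing over $n$, the constants $d_n$ cancel, the $A$ and $-A$ sums assemble into $\sfl(A)$ and $\sfl(-A)$, and the kernel terms telescope, giving
$$\sfl(A)=-\sfl(-A)+\Dim\Ker A(T)-\Dim\Ker A(0).$$

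For the first formula I would apply the definition of $\sfl(-A)$ to the original partition $(t_n),(a_n)$, which is a flow partition for $-A$ by hypothesis. The same translation now reads $\Dim H_{[0,a_n)}(-A(t))=\Dim H_{[-a_n,0)}(A(t))+\Dim\Ker A(t)$, so
$$\sfl(-A)=\sum_{n=1}^N\bigl[\Dim H_{[-a_n,0)}(A(t_n))-\Dim H_{[-a_n,0)}(A(t_{n-1}))\bigr]+\Dim\Ker A(T)-\Dim\Ker A(0),$$
with the kernel contributions again telescoping. Substituting this into the formula derived above causes the two boundary kernel terms to cancel and delivers the first identity. The main obstacle is purely bookkeeping: one must keep the half-open endpoints near $0$ straight so that $\Ker A$ is counted on exactly one side of every relevant identity, with the leftover kernel terms assembling precisely into the boundary contribution $\Dim\Ker A(T)-\Dim\Ker A(0)$.
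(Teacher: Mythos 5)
Your proof is correct and takes essentially the same route as the paper: a common flow partition for $A$ and $-A$ from Theorem \ref{flowpart}, constancy of the rank of $P_{[-a_n,a_n)}$ on each subinterval via Theorem \ref{kat} and Lemma \ref{constrank}, the translation between spectral subspaces of $A$ for an interval $I$ and of $-A$ for $-I$ (with the kernel counted on exactly one side), and partition-independence of the spectral flow to pass to an arbitrary flow partition for $-A$. The only difference is cosmetic: you establish $\sfl(A)=-\sfl(-A)+\Dim\Ker(A(T))-\Dim\Ker(A(0))$ first and then deduce the summation formula, whereas the paper derives them in the opposite order.
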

\begin{proof}
Choose a flow partition $(t_n),(a_n)$ for $A$ that is also a flow partition for $-A$ (using Theorem \ref{flowpart}). The projection
$$P_{[-a_n,a_n)}$$
has constant rank on $[t_{n-1},t_n]$ by Theorem $\ref{kat}$ and Lemma $\ref{constrank}$ (as above).
Thus
\begin{align*}
\sfl(A)&=\sum\limits_{n=1}^N \Dim(H_{[0,a_n)}(t_n))-\Dim(H_{[0,a_n)}(t_{n-1}))\\
&=\sum\limits_{n=1}^N \left(\Dim(H_{[-a_n,a_n)}(t_n))-\Dim(H_{[-a_n,0)}(t_{n}))\right.\\
&\ \ \ \ \left.-\Dim(H_{[-a_n,a_n)}(t_{n-1}))+\Dim(H_{[-a_n,0)}(t_{n-1}))\right)\\
&=\sum\limits_{n=1}^N -(\Dim(H_{[-a_n,0)}(t_{n}))-\Dim(H_{[-a_n,0)}(t_{n-1}))).
\end{align*}
As the spectral subspace of $A$ corresponding to an interval $I$ is that of $-A$ corresponding to $-I$, we have
\begin{align*}
&\sum\limits_{n=1}^N -(\Dim(H_{[-a_n,0)}(t_{n}))-\Dim(H_{[-a_n,0)}(t_{n-1})))\\
&=-\sfl(-A)+\sum\limits_{n=1}^N \Dim(H_{\{0\}}(t_{n}))-\Dim(H_{\{0\}}(t_{n-1}))\\
&=-\sfl(-A)+\Dim\Ker(A(T))-\Dim\Ker(A(0)).
\end{align*}
As the last line is independent of the choice of flow partition and the second set of equalities holds for any flow partition of $-A$, the theorem holds for other flow partitions of $-A$ as well.
\end{proof}
The spectral flow is unchanged by conjugating with unitaries:

\begin{thm}
\label{conflow}
Let $(U(t))_{t\in[0,T]}$ be a family of unitary operators such that spectral flow is defined for $t\mapsto U(t)^*A(t)U(t)$. Then 
$\sfl(U^*AU)=\sfl(A)$
\end{thm}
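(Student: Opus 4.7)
My plan is to exploit the fact, noted in the Functional Calculus subsection, that for a unitary $U$ and a self-adjoint operator $B$ one has $f(U^*BU)=U^*f(B)U$ for any Borel measurable $f$. Applied to $f=\chi_I$ this gives $P^{U^*AU}_I(t)=U(t)^*P_I(t)U(t)$, so the spectral projections of the conjugated family are obtained from those of $A$ by unitary conjugation. In particular, the ranges $H^{U^*AU}_I(t)$ and $H_I(t)$ are unitarily isomorphic and so have the same dimension, and the spectra of $U(t)^*A(t)U(t)$ and $A(t)$ coincide at each $t$.

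First I would pick a flow partition $(t_n),(a_n)$ for $A$. I would then check that the same data is a flow partition for $U^*AU$: the condition $a_n\notin\spec(A(t))$ for $t\in[t_{n-1},t_n]$ transfers because the spectrum is preserved under unitary conjugation, and finite dimensionality of $H^{U^*AU}_{[0,a_n)}(t)$ follows from the identification $H^{U^*AU}_{[0,a_n)}(t)=U(t)^*H_{[0,a_n)}(t)$, which makes it isomorphic to the finite dimensional space $H_{[0,a_n)}(t)$.

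With the same flow partition available for both families, each summand in the definition of the spectral flow transfers directly:
\begin{align*}
\Dim\bigl(H^{U^*AU}_{[0,a_n)}(t)\bigr)&=\Dim\bigl(\Ran(U(t)^*P_{[0,a_n)}(t)U(t))\bigr)\\
&=\Dim\bigl(\Ran(P_{[0,a_n)}(t))\bigr)=\Dim\bigl(H_{[0,a_n)}(t)\bigr),
\end{align*}
the middle equality holding because $U(t)$ is a bijection. Summing over $n$ yields $\sfl(U^*AU)=\sfl(A)$.

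There is no substantial obstacle here; the only thing to be careful about is that we are allowed to use the same partition for both families. That is guaranteed once we observe that unitary conjugation preserves both the spectrum and the rank of the spectral projections, which are precisely the two conditions in the definition of a flow partition. The assumption in the statement that the spectral flow is defined for $U^*AU$ (i.e.\ that this family satisfies the standing assumptions of Section \ref{set}) is what ensures that the notion of flow partition, and hence spectral flow, makes sense on the right hand side.
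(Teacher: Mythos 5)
Your proposal is correct and follows essentially the same argument as the paper: conjugation equivariance of the functional calculus gives $\chi_{[0,a)}(U^*(t)A(t)U(t))=U(t)^*P_{[0,a)}(t)U(t)$, so all spectral subspaces have unchanged dimension and every summand in the defining formula is the same. Your extra care in checking that a flow partition for $A$ serves for $U^*AU$ (via preservation of spectra and ranks) is exactly the point the paper's remark addresses, just spelled out more explicitly.
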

\begin{proof} 
$$\chi_{[0,a)}(U^*(t)A(t)U(t))=U^*(t)P_{[0,a)}(t)U(t)$$
is the projection onto $U^*(t)H_{[0,a)}(t)$, which has the same dimension as $H_{[0,a)}(t)$. Thus all summands in the definition of the spectral flow are unchanged by the conjugation, so the spectral flow is the same.
\end{proof}
\head{Remark:} The condition that $\sfl(U^*AU)$ is defined is purely formal. The defining formula for the spectral flow will always be well defined, because it takes the same values as for $A$.

The following theorem will be the first step toward proving the main theorem. Its proof uses arguments from section 4.2 of [B\"aSt] reformulated in terms of spectral projections and combines them with theorems \ref{kat} and \ref{contfred}.
\begin{thm}
\label{flowind}
If $(P_{<0}(0),P_{<0}(t))$ is a Fredholm pair for all $t\in[0,T]$, we have
$$\sfl(A)=\ind(P_{<0}(0),P_{<0}(T)).$$
\end{thm}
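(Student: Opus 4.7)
The plan is to introduce a flow partition $(t_n)_{n=0}^N$, $(a_n)_{n=1}^N$ for $A$ (guaranteed by Theorem \ref{flowpart}) and to bridge between the possibly discontinuous projections $P_{<0}(t)$ by means of the finer cutoffs $P_{<a_n}(t)$, which are norm continuous on each subinterval $[t_{n-1},t_n]$. Indeed, since $a_n\notin\spec(A(t))$ for all $t\in[t_{n-1},t_n]$, Theorem \ref{kat} gives norm continuity of $t\mapsto P_{<a_n}(t)$ on this interval, and $P_{<a_n}(t)-P_{<0}(t)=P_{[0,a_n)}(t)$ is a finite-rank projection orthogonal to $P_{<0}(t)$.

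The first step is to check that every intermediate pair $(P_{<0}(0),P_{<a_n}(t))$ is Fredholm. Since $(P,Q)$ is a Fredholm pair if and only if $\pm 1$ lie outside the essential spectrum of $P-Q$ (the criterion used in the proof of the first lemma of Section \ref{sympair}), and since the essential spectrum is invariant under compact perturbations, Fredholmness transfers from the assumed pair $(P_{<0}(0),P_{<0}(t))$ to $(P_{<0}(0),P_{<a_n}(t))$, the two differing by the compact operator $P_{[0,a_n)}(t)$. Applying Theorem \ref{contfred} to the constant path $P_{<0}(0)$ and the continuous path $t\mapsto P_{<a_n}(t)$ on $[t_{n-1},t_n]$ then gives
$$\ind(P_{<0}(0),P_{<a_n}(t_{n-1}))=\ind(P_{<0}(0),P_{<a_n}(t_n)).$$

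The next ingredient is a jump formula for modifying one entry of a Fredholm pair by a finite-rank projection orthogonal to it: if $(P,Q)$ is Fredholm and $K$ is a finite-rank orthogonal projection with $\Ran K\perp\Ran Q$, then
$$\ind(P,Q+K)=\ind(P,Q)-\Dim\Ran K.$$
This follows by factoring the Fredholm map $[Q:\Ran P\rightarrow\Ran Q]$ as the canonical projection $\Ran Q\oplus\Ran K\rightarrow\Ran Q$ (which has kernel $\Ran K$, trivial cokernel, and hence Fredholm index $\Dim\Ran K$) composed after $[Q+K:\Ran P\rightarrow\Ran Q\oplus\Ran K]$, and applying the multiplicativity of the Fredholm index. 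Specialising to $Q=P_{<0}(t)$ and $K=P_{[0,a_n)}(t)$ produces
$$\ind(P_{<0}(0),P_{<a_n}(t))=\ind(P_{<0}(0),P_{<0}(t))-\Dim H_{[0,a_n)}(t).$$

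Combining the two preceding displays at $t=t_{n-1}$ and $t=t_n$ yields, for each $n$,
$$\ind(P_{<0}(0),P_{<0}(t_n))-\ind(P_{<0}(0),P_{<0}(t_{n-1}))=\Dim H_{[0,a_n)}(t_n)-\Dim H_{[0,a_n)}(t_{n-1}).$$
Telescoping over $n=1,\dots,N$ and using $\ind(P_{<0}(0),P_{<0}(0))=0$ delivers $\ind(P_{<0}(0),P_{<0}(T))=\sfl(A)$. The only non-routine step is the jump formula; everything else amounts to identifying the right continuous path of Fredholm pairs and invoking the cited theorems.
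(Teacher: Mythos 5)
Your proposal is correct and takes essentially the same route as the paper: the same flow partition, the same use of Theorems \ref{kat} and \ref{contfred} to get constancy of $\ind(P_{<0}(0),P_{<a_n}(t))$ on each $[t_{n-1},t_n]$, and your jump formula is exactly the paper's factorization $P_{<0}(t)_r=P_tP_{<a_n}(t)_r$ with $\ind(P_t)=\Dim H_{[0,a_n)}(t)$, followed by the same telescoping sum (your separate essential-spectrum argument for Fredholmness of the intermediate pairs is harmless but redundant, since it already follows from the two-out-of-three property in that factorization).
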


\begin{proof}
Let $(t_n)$, $(a_n)$ be a flow partition for $A$. 

The idea of the proof is to consider the pair $(P_{<0}(0),P_{<a_n}(t))$ for $t\in[t_{n-1},t_n]$, and show that it is Fredholm with constant index in $t$ on $[t_{n-1},t_n]$. Then we will compare its index to that of the pair $(P_{<0}(0),P_{<0}(t))$.  As the index of the former pair is constant on $[t_{n-1},t_n]$, changes in the latter index, as $t$ varies over $[t_{n-1},t_n]$, are the same as changes in the difference of the two indices. These will correspond exactly to the spectral flow over that interval.

For any projection $P$ let $P_r$ denote the restriction 
$$P_r:=[P:{\Ran(P_{<0}(0))}\rightarrow{\Ran(P)}]\ .$$
By the definition of Fredholm pairs and their index, the pair $(P_{<0}(0),P)$ is Fredholm with index k if and only if $P_r$ is Fredholm with index k.

Fix $n\leq N$. For $t\in[t_{n-1},t_n]$, define   
$$P_t:=[P_{<0}(t):{\Ran(P_{<a_n}(t))}\rightarrow{\Ran(P_{<0}(t))}]=[P_{<0}(t):{H_{(-\infty,a_n)}(t)}\rightarrow{H_{(-\infty,0)}(t)}].$$
 As $H_{(-\infty,0)}(t)$ is a subspace of $H_{(-\infty,a_n)}(t)$ with complement $H_{[0,a_n)}(t)$, this is Fredholm with index
$$\ind (P_t)=\Dim(\Ker(P_t))=\Dim(H_{[0,a_n)}(t)).$$
As projecting onto $H_{(-\infty,a_n)}(t)$ and then onto $H_{(-\infty,0)}(t)$ is the same as projecting onto $H_{(-\infty,0)}(t)$ immediately, we have
$$P_{<0}(t)_r=P_tP_{<a_n}(t)_r.$$
As the first two projections in the equation are Fredholm,
$P_{<a_n}(t)_r$ is Fredholm as well, and we have
$$\ind(P_{<0}(t)_r)=\ind(P_t)+\ind(P_{<a_n}(t)_r)=\Dim(H_{[0,a_n)}(t))+\ind(P_{<a_n}(t)_r).$$

By Theorem \ref{kat}, $P_{<a_n}(t)_r$ is continuous in $t$ on $[t_{n-1},t_n]$. By Theorem \ref{contfred}, $(P_{<0}(0), P_{<a_n}(t))$ has constant index for $t\in [t_{n-1},t_n]$. Thus we have
\begin{align*}
\ind(P_{<a_n}(t_n)_r)=\ind(P_{<0}(0),P_{<a_n}(t_n))=\ind(P_{<0}(0),P_{<a_n}(t_{n-1}))=\ind(P_{<a_n}(t_{n-1})_r)
\end{align*}
Moreover, as $P_{<0}(0)_r$ is the identity on $H_{(-\infty,0)}(0)$, it has index 0.

Combining everything, we get:
\begin{align*}
&\ind(P_{<0}(0),P_{<0}(T))\\
&=\ind(P_{<0}(T)_r)\\
&=\ind(P_{<0}(T)_r)-\ind(P_{<0}(0)_r)\\
&=\sum\limits_{n=1}^N \ind( P_{<0}(t_n)_r)-\ind(P_{<0}(t_{n-1})_r)\\
&=\sum\limits_{n=1}^N \Dim(H_{[0,a_n)}(t_n))+\ind(P_{<a_n}(t_n)_r)-\Dim(H_{[0,a_n)}(t_{n-1}))-\ind(P_{<a_n}(t_{n-1})_r)\\
&=\sum\limits_{n=1}^N \Dim(H_{[0,a_n)}(t_n))-\Dim(H_{[0,a_n)}(t_{n-1}))\\
&=\sfl(A).
\end{align*}
\end{proof}

\section{The Evolution Operator and the Cauchy Property}
\label{s4}
Recall the setting from section \ref{set}.
The purpose of this section is to show that for given $g\in L^2([0,T],H)$, $s\in [0,T]$ and $x\in H$, the equations
$$Df=g$$
$$f(s)=x\ $$
have a unique solution $f$. This will be called the Cauchy property. It basically follows from chapter 5 in [PA], but since the results needed here are slightly different from those derived in [PA] and some things are easier in this setting than in the one of [PA], a slightly adapted version of the arguments in [PA] will be presented below.
\subsection{The Evolution Operator}
The family $Q$ constructed in the following theorem will be called the evolution operator. It will be used throughout the rest of the paper. Its construction will occupy this subsection.
\begin{thm}
\label{evop}
There is a family of operators $Q(t,s):H\rightarrow H$ for $s,t\in[0,T]$, satisfying the following conditions (for all $s,t,r\in [0,T]$):
\begin{enumerate}
\item $Q(t,t)=I$ \label{a}
\item $Q(t,s)Q(s,r)=Q(t,r)$\label{b}
\item $Q(t,s)$ is an isometry (of $H$). \label{c}
\item $Q(t,s)(W)\subseteq W$ and $Q(t,s):W\rightarrow W$ is bounded.
\item $Q$ is strongly continuously differentiable in $B(W,H)$ with derivatives 
$$\pdt Q(t,s)=iA(t)Q(t,s)$$ and 
$$\pds Q(t,s)=-Q(t,s)iA(s).$$
\item $Q(t,s)x$ (as a function of $s$ and $t$) is continuous in $H$ for $x\in H$ and continuous in W for $x\in W$.
\end{enumerate}
\end{thm}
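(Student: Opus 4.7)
The plan is to construct $Q$ as a strong limit of explicit piecewise approximations, in the style of Kato--Pazy. For $n\in\N$, partition $[0,T]$ uniformly into $n$ subintervals with breakpoints $t^n_k:=kT/n$, and freeze $A$ to the step family $A_n(t):=A(t^n_{k-1})$ for $t\in[t^n_{k-1},t^n_k)$. Since each $A(t^n_{k-1})$ is self-adjoint, the functional calculus supplies a strongly continuous unitary group $e^{i(t-t^n_{k-1})A(t^n_{k-1})}$ on $H$ which commutes with $A(t^n_{k-1})$ and thus preserves $W$. Concatenating these via the cocycle law defines a unitary operator $Q_n(t,s)\in B(H)$ satisfying (1) and (2) exactly, together with $\|Q_n(t,s)\|=1$.

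The central estimate is a uniform $W$-bound $\|Q_n(t,s)x\|_W\le C\|x\|_W$. On each subinterval $Q_n$ commutes with $A_n$, so $\|A(t^n_{k-1})Q_n(t,s)x\|$ is constant in $t$ on $[t^n_{k-1},t^n_k]$. Crossing a breakpoint then amounts to comparing the graph norms of $A(t^n_{k-1})$ and $A(t^n_k)$: from Lemmas \ref{dbound} and \ref{idiff} the family $A(t)(A(0)+i)^{-1}\in B(H)$ is strongly continuously differentiable, hence norm continuous by Lemma \ref{diffbound}, and a mean value argument yields $\|(A(t^n_k)-A(t^n_{k-1}))(A(0)+i)^{-1}\|\le C'T/n$. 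Iterating over the $n$ subintervals gives $\|Q_n(t,s)x\|_{A(0)}\le(1+C'T/n)^n\|x\|_{A(0)}\le e^{C'T}\|x\|_{A(0)}$, and Lemma \ref{equinorm} allows free passage between the various graph norms.

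With this in hand, for $x\in W$ the identity
$$Q_m(t,s)x-Q_n(t,s)x=\int_s^t Q_n(t,r)\,i\bigl(A_m(r)-A_n(r)\bigr)Q_m(r,s)x\,dr$$
(obtained by differentiating $r\mapsto Q_n(t,r)Q_m(r,s)x$ between breakpoints) combined with the estimate $\|A_n(r)y-A(r)y\|\le(T/n)\sup_u\|A'(u)y\|$ for $y\in W$ shows that $Q_n(t,s)x$ is Cauchy in $H$ uniformly in $s,t$. Define $Q(t,s)x:=\lim_n Q_n(t,s)x$ on $W$ and extend to $H$ using $\|Q_n\|=1$ together with Lemma \ref{densecont}. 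Properties (1)--(3) pass to the limit directly from $Q_n$. For (4), the $W$-bounded sequence $Q_n(t,s)x$ has a weak subsequential limit in $W$ that must agree with $Q(t,s)x$ in $H$, so $Q(t,s)x\in W$ with the same bound.

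For (5) and (6), pass to the limit in $Q_n(t,s)x=x+\int_s^t iA_n(r)Q_n(r,s)x\,dr$ for $x\in W$ to obtain the corresponding identity with $A_n$ replaced by $A$ (the integrands converge in $H$ pointwise in $r$ and are uniformly bounded by the $W$-bound, so dominated convergence applies); Lemma \ref{diffint}, Theorem \ref{strictdiff}, and Lemma \ref{prodiff} then promote this integral representation to strong continuous differentiability in $B(W,H)$, with the companion formula $\partial_sQ(t,s)=-Q(t,s)iA(s)$ deduced by differentiating $Q(t,s)Q(s,r)=Q(t,r)$ in $s$. The main technical obstacle is the uniform $W$-bound of the second paragraph: the per-subinterval piece is trivial thanks to piecewise commutativity of $Q_n$ with $A_n$, but controlling the multiplicative error accumulated across the breakpoints is where the hypothesis of strong continuous differentiability of $A$ is used essentially and where the Kato--Pazy construction pays off.
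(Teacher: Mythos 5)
Your construction, uniform $W$-bound, and Cauchy estimate follow the same Kato--Pazy strategy as the paper (the paper's Lemma \ref{Qbound} gets the bound $e^{\Delta/2}$ by a slightly more careful bookkeeping, but your breakpoint-by-breakpoint estimate $(1+C'T/n)^n\le e^{C'T}$ together with Lemma \ref{equinorm} is sound), and your weak-compactness argument correctly yields $Q(t,s)(W)\subseteq W$ with $\|Q(t,s)\|_{W\rightarrow W}\leq C$. The genuine gap is in the last step, where you claim that the integral representation plus Lemma \ref{diffint}, Theorem \ref{strictdiff} and Lemma \ref{prodiff} ``promote'' everything to properties (5) and (6). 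Two things are missing. First, the strong continuity of $(t,s)\mapsto Q(t,s)$ in $B(W)$ (the second half of (6)) does not follow from the weak subsequential limit argument, which only gives membership in $W$ and a bound; none of the cited lemmas produce it. Second, the passage from $Q(t,s)x=x+\int_s^t iA(r)Q(r,s)x\,dr$ to $\pdt Q(t,s)x=iA(t)Q(t,s)x$ requires the integrand $r\mapsto A(r)Q(r,s)x$ to be (at least) continuous in $H$ at $r=t$, and its strong measurability is needed even to form the Bochner integral; since $Q_n(r,s)x$ converges to $Q(r,s)x$ only weakly in $W$, continuity of $r\mapsto A(r)Q(r,s)x$ is exactly the missing $W$-continuity again, so the argument is circular at this point. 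The same issue reappears when you want the derivative $iA(t)Q(t,s)$ to be strongly continuous in $B(W,H)$, as required for ``strongly continuously differentiable''.

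The paper closes precisely this hole with a separate, substantive argument: it derives the identity $Q(t,s)R(s)=R(t)V(t,s)$ with $R(t)=(A(t)+i)^{-1}$ and $V$ the unique solution of a Volterra-type integral equation (Lemma \ref{inteq}, proved by an iterated-integral series and Gronwall), which gives $Q(t,s)|_W=R(t)V(t,s)R(s)^{-1}$ and hence strong continuity of $Q$ in $B(W)$; differentiability is then obtained not from the limiting integral equation but by comparing $Q(t,s)$ with $\exp((t-s)iA(t))$ near $t=s$ and propagating via the cocycle law. To repair your proof you would either need to reproduce an argument of this type for the $B(W)$-continuity, or find another route to the continuity of $r\mapsto A(r)Q(r,s)x$; as written, properties (5) and the $W$-part of (6) are not established.
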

\head{Remark:}The function $f(t)=Q(t,s)x$ solves the equation Df=0 for $x\in W$ and thus, as $D$ is closed and $W$ is dense, for all $x\in H$. Applying $Q(t,s)$ can thus be thought of evolving from $s$ to $t$ subject to the equation $Df=0$. One should keep in mind that $Q$ is strongly continuous, but generally neither strongly differentiable nor norm continuous in $B(H)$ or $B(W)$, only in $B(W,H)$. Heuristically, its behavior can be thought of as rapid oscillation, with unbounded oscillation speed given by $A$.
\begin{proof}
For a constant family $A(t)=A_0$, the family $exp((t-s)iA_0)$ has all the above properties. The idea for constructing $Q$ in general is to piece together exponentials for piecewise constant approximations of $A$ and then take the limit for an ever finer partition.
\begin{lemma}
The family $Q(t,s)=exp((t-s)iA_0)$ has the properties required in Theorem \ref{evop} for the constant family $A(t)=A_0$. Moreover, it is an isometry of $W$ with respect to $||\cdot||_{A_0}$.
\end{lemma}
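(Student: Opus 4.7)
The plan is to verify each of the six properties of Theorem \ref{evop}, together with the "moreover" statement, by directly applying the spectral functional calculus to the self-adjoint operator $A_0$. Since $\lambda\mapsto e^{i\alpha\lambda}$ has modulus one on $\R$ for real $\alpha$, the operator $\exp(i\alpha A_0)$ is unitary on $H$; this immediately yields property \ref{c}. Properties \ref{a} and \ref{b} reduce to the functional calculus identities $e^{0}=1$ and $e^{(a+b)\lambda}=e^{a\lambda}e^{b\lambda}$ (the two factors commute because they are functions of the same operator), evaluated at $A_0$.

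For property $4$ and the moreover clause I would use that $\exp(i\alpha A_0)$ commutes with $A_0$ in the sense that for $x\in W$ one has $Q(t,s)x\in W$ and $A_0Q(t,s)x=Q(t,s)A_0x$ (this is part of the functional calculus recalled in Section 2.3). Combined with the $H$-isometry already established, this gives
$$\norm{Q(t,s)x}_{A_0}^2=\norm{Q(t,s)x}^2+\norm{A_0Q(t,s)x}^2=\norm{x}^2+\norm{Q(t,s)A_0x}^2=\norm{x}_{A_0}^2,$$
so $Q(t,s)$ preserves $W$ and is in fact an isometry of $(W,\norm{\cdot}_{A_0})$. This settles property $4$ and the moreover statement simultaneously.

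For property $5$ I would invoke the standard consequence of the spectral theorem (Stone's theorem) that for $x\in\Dom(A_0)=W$ the map $r\mapsto e^{irA_0}x$ is differentiable in $H$ with derivative $iA_0e^{irA_0}x$; applied in the two variables this gives the asserted derivatives $\pdt Q(t,s)x=iA_0Q(t,s)x$ and $\pds Q(t,s)x=-Q(t,s)iA_0x$, pointwise in $H$. Strong continuity of these derivatives in $B(W,H)$ is then immediate: the multiplier $iA_0\in B(W,H)$ is fixed, and $Q$ depends strongly continuously on $(s,t)$ by property $6$. Finally, property $6$ itself: continuity of $(s,t)\mapsto Q(t,s)x$ in $H$ for $x\in H$ is the strong continuity of the one-parameter unitary group $r\mapsto e^{irA_0}$, a standard consequence of the spectral theorem and dominated convergence; continuity in $W$ for $x\in W$ follows because $A_0Q(t,s)x=Q(t,s)A_0x$ and the right-hand side is continuous in $H$ by the previous case applied to $A_0x\in H$. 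The main step of substance is the spectral-theorem application underlying Stone's theorem; everything else reduces to pointwise identities for $e^{i\alpha\lambda}$, so no real obstacle is expected.
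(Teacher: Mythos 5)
Your proof is correct, but it gets the analytic core from a different place than the paper does. The paper does not cite Stone's theorem; it uses the multiplication-operator form of the spectral theorem (a unitary $U:L^2(M)\rightarrow H$ with $U^*A_0U$ given by multiplication by a real function $\alpha$) and then verifies strong continuity by a truncation argument on $\{|\alpha|\leq b\}$ plus dominated convergence, and strong differentiability in $B(W,H)$ by an explicit estimate via Minkowski's integral inequality --- in effect reproving the relevant fragment of Stone's theorem by hand, with the isometry of $H$ and of $W$ read off pointwise from $|e^{i(t-s)\alpha(r)}f(r)|=|f(r)|$ and $|\alpha(r)e^{i(t-s)\alpha(r)}f(r)|=|\alpha(r)f(r)|$. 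You instead black-box exactly these facts: unitarity, the group law, preservation of $W$ and commutation $A_0Q(t,s)x=Q(t,s)A_0x$ from the functional calculus, strong continuity of the unitary group, and the derivative formula for $x\in\Dom(A_0)$ from Stone's theorem; the graph-norm isometry on $W$ then comes out algebraically from the commutation identity rather than from the $L^2(M)$ picture, and the $B(W,H)$-strong continuity of the derivative $iA_0Q(t,s)$ is correctly reduced to property 6 (established independently, so there is no circularity). Your route is shorter and cleaner if one is willing to quote Stone's theorem and the domain-preservation/commutation facts of the Borel functional calculus; the paper's route is self-contained modulo the spectral theorem alone, which matches its stated prerequisites, and its explicit estimates are of the same type it reuses later when patching the $Q_n$ together. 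Both arguments establish all six properties and the isometry of $(W,||\cdot||_{A_0})$.
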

\begin{proof}
By a version of the Spectral Theorem, there is a finite measure space $(M,\mu)$ with a measurable function $\alpha:M\rightarrow \R$ and a unitary map $U:L^2(M)\rightarrow H$ such that
$$U^*A_0Uf=\alpha f,$$
for 
$$f\in U^*(\Dom(A))=\Dom(U^*AU)=\{f\in L^2(M)|\alpha f\in L^2(M)\}$$
i.e. $A_0$ is unitarily equivalent to a multiplication operator. The exponential function then takes the form
$$U^*exp(i(t-s)A_0)Uf=exp(i(t-s)\alpha)f$$
for $f\in L^2(M)$.

As the properties of Theorem \ref{evop} are preserved under conjugating with $U$, when replacing $A(t)$ with $U^*A_0U$, $W$ with $\Dom(U^*A_0U)$ and $H$ with $L^2(M)$, it suffices to show the properties for multiplication with $exp(i(t-s)\alpha)$ with those replacements.

As multiplication with $exp(i(t-s)\alpha)$ has properties 1. and 2., so does $exp(i(t-s)A_0)$. As the $L^2$-norm of a function only depends on its absolute value and we have for any $r\in M$
$$|exp(i(t-s)\alpha(r))f(r)|=|f(r)|$$
and
$$|\alpha(r) exp(i(t-s)\alpha(r))f(r)|=|\alpha(r)f(r)|,$$
we know that $U^*exp(i(t-s)A_0)U$ is an isometry of $L^2(M)$ and $\Dom(U^*AU)$. We can conclude that $exp(i(t-s)A_0)$ is an isometry of $H$ and $W$.

This leaves us with strong continuity and differentiability to show. Note that it suffices to show these for $t$, as changes in $s$ have the same effect as changes in $t$ with opposite sign. We'll start with continuity.

Fix $f\in L^2(M)$ and $\epsilon>0$. Let $b>0$ be large enough such that 
$$\int\limits_{|\alpha|^{-1}((b,\infty))}|2f|^2d\mu<\epsilon.$$
For $r\in\alpha^{-1}([-b,b])$,
$$|exp(i(t+h-s)\alpha(r))-exp(i(t-s)\alpha(r))|=\left|\int\limits_0^h i\alpha(r)exp(i(t+u-s)\alpha(r))du\right|\leq hb$$
converges to 0 uniformly for $h\rightarrow 0$. Thus 
\begin{align*}
&\limsup\limits_{h\rightarrow 0}||(exp(i(t+h-s)\alpha)-exp(i(t-s)\alpha))f||_{L^2(M)}^2\\
&\leq \limsup\limits_{h\rightarrow 0}\sup\limits_{r\in\alpha^{-1}([-b,b])} |exp(i(t+h-s)\alpha(r))-exp(i(t-s)\alpha(r))|^2\int\limits_{\alpha^{-1}([-b,b])}|f|^2d\mu\\
&+\int\limits_{|\alpha|^{-1}((b,\infty))}|(exp(i(t+h-s)\alpha)-exp(i(t-s)\alpha))f|^2d\mu\\
&\leq \int\limits_{|\alpha|^{-1}((b,\infty))}|2f|^2d\mu\\
&<\epsilon.
\end{align*}
As $\epsilon$ was arbitrary, $exp(i(t+h-s)\alpha)f$ converges to $exp(i(t-s)\alpha)f$ in $L^2(M)$. If $\alpha f$ is square integrable (i.e. $f\in \Dom(U^*A_0U)$), the same holds for $\alpha f$ instead of $f$. Thus multiplication with $exp(i(t-s)\alpha)$ is strongly continuous on $L^2(M)$ and $\Dom(U^*A_0U)$. We can conclude that $exp(i(t-s)A_0)$ is strongly continuous on $H$ and $W$.

For strong differentiability, pick $f$ such that $\alpha f\in L^2(M)$. For $\epsilon>0$, choose $\delta>0$ such that for $|u|<\delta$, we have
$$||(exp(i(t+u-s)\alpha)-exp(i(t-s)\alpha))\alpha f||_{L^2(M)}<\epsilon.$$
This is possible by the strong continuity shown in the previous paragraph. For $|h|<\delta$, we have
\begin{align*}
&||exp(i(t+h-s)\alpha)f-exp(i(t-s)\alpha)f-hi\alpha exp(i(t-s)\alpha)f||_{L^2(M)}\\
&=\left(\int\limits_M\left|\int\limits_0^h i\alpha(r)(exp(i(t+u-s)\alpha(r))-exp(i(t-s)\alpha(r)))f(r)du\right|^2d\mu (r)\right)^\frac{1}{2}.
\end{align*}
Using Minkowski's integral inequality, we can estimate this by
\begin{align*}
&\left| \int\limits_0^h ||i\alpha(exp(i(t+u-s)\alpha)-exp(i(t-s)\alpha))f||_{L^2(M)}du\right |\\
&\leq\epsilon|h|.
\end{align*}
Thus  $exp(i(t-s)\alpha)f$ is differentiable in $L^2(H)$ for all $f\in \Dom(U^*A_0U)$ with derivatives 
$$\pdt exp(i(t-s)\alpha)f=-\pds exp(i(t-s)\alpha)f=i\alpha exp(i(t-s)\alpha)f=exp(i(t-s)\alpha)i\alpha f.$$
We can conclude that $exp(i(t-s)A_0)$ is strongly differentiable in $B(W,H)$ with the desired derivatives (which are strongly continuous).
Thus we have all the desired properties.
\end{proof}

For $k\leq n\in \N$ let 
$$t_k^n:=\frac{kT}{n}$$
$$A_n(t):=A(t_k^n),\ for\ t\in [t_{k-1}^n,t_k^n]$$
For $s,t \in [t_{k-1}^n,t_k^n]$ define
$$Q_n(t,s):=exp((t-s)iA(t_k^n)).$$
For $s\in [t_{k-1}^n,t_k^n]$, $t\in [t_{l-1}^n,t_l^n]$ and $k<l$, define
$$Q_n(t,s):=Q_n(t,t_{l-1}^n)\left(\prod\limits_{j=k+1}^{l-1} Q_n(t_j^n,t_{j-1}^n)\right)Q_n(t_k^n,s).$$
The product is to be ordered right to left, i.e. $j=k+1$ corresponds to the rightmost factor. For $k>l$, define 
$$Q_n(t,s):=Q_n(t,t_l^n)\left(\prod\limits_{j=l+1 }^{k-1} Q_n(t_{j-1}^n,t_{j}^n)\right)Q_n(t_{k-1}^n,s),$$ 
with the product being ordered left to right this time. Note that these products are constructed in such a way that 
$$Q_n(s,s)=I$$
and
$$Q_n(t,r)Q_n(r,s)=Q_n(t,s).$$
 
As the definitions of $Q_n(t_i,s)$ with $l=i$ and $l=i+1$ agree, i.e. the families we patch together agree on interval boundaries, $Q_n(t,s)$ is strongly continuous in $t$. The same is true for $s$. Moreover, if $t$ and $s$ do not coincide with some $t_k^n$, we have
$$\pdt Q_n(t,s)=iA_n(t)Q_n(t,s)$$
and
$$\pds Q_n(t,s)=-Q_n(t,s)iA_n(t).$$
Every $Q_n(t,s)$ is an isometry $H\rightarrow H$ and maps $W$ to $W$, as this is true for all the exponentials from which it is composed.

\begin{lemma}
$||Q_n(s,t)||_{W\rightarrow W}$ is bounded by a constant independent of $s$,$t$ and $n$.
\label{Qbound}
\end{lemma}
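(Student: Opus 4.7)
The plan is to exploit an interplay between exact isometries within each subinterval of the partition and graph-norm comparison across subinterval transitions. The main obstacle to overcome is that the naive estimate --- chaining bounds on each of the $n$ exponential factors of $Q_n$ through the equivalence of graph norms --- produces a factor $C^n$ that blows up with $n$. The key observation is that each exponential factor is an exact isometry in its own graph norm (costing nothing), so only the $n$ graph-norm transitions contribute, each of size $e^{CT/n}$, telescoping to a uniform constant $e^{CT}$.

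First I would establish a Gronwall-type comparison: there exists $C>0$, depending only on $A$, such that
$$\|y\|_{A(t)}^2\leq e^{C|t-s|}\|y\|_{A(s)}^2 \quad\text{for all } y\in W,\ s,t\in[0,T].$$
Since $A:[0,T]\to B(W,H)$ is strongly continuously differentiable, for fixed $y\in W$ the map $t\mapsto \|A(t)y\|^2$ is continuously differentiable with derivative $2\,\mathrm{Re}\langle A(t)y, A'(t)y\rangle$. Using Lemma \ref{dbound} to bound $\|A'(t)\|_{W\to H}$ uniformly and Lemma \ref{equinorm} to dominate $\|y\|_W$ by $\|y\|_{A(t)}$ uniformly, one obtains $\bigl|\frac{d}{dt}\|y\|_{A(t)}^2\bigr|\leq C\|y\|_{A(t)}^2$, and Gronwall's inequality yields the comparison.

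Second, I would verify that within a single subinterval $[t_{k-1}^n,t_k^n]$ the exponential $\exp((t-t_{k-1}^n)iA(t_k^n))$ commutes with $A(t_k^n)$ (by functional calculus) and is an $H$-isometry, hence preserves $\|\cdot\|_{A(t_k^n)}$ exactly. For $s\in[t_{k-1}^n,t_k^n]$ and $t\in[t_{l-1}^n,t_l^n]$ with $k\leq l$, I would then factor $Q_n(t,s)$ along the partition and alternate (i) application of an exponential piece, which preserves its associated graph norm, with (ii) a transition from $\|\cdot\|_{A(t_j^n)}$ to $\|\cdot\|_{A(t_{j+1}^n)}$, which costs a Gronwall factor $e^{CT/n}$. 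Over the at most $n$ transitions this multiplies to at most $e^{CT}$, and converting from $\|\cdot\|_{A(t_l^n)}$ back to $\|\cdot\|_W=\|\cdot\|_{A(0)}$ via Lemma \ref{equinorm} yields the uniform bound on $\|Q_n(t,s)x\|_W$ in terms of $\|x\|_W$. The case $t\leq s$ is handled identically using the reverse-ordered product.
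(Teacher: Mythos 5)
Your proposal is correct and follows essentially the same route as the paper: factor $Q_n$ along the partition, use that each exponential piece is an exact isometry in the graph norm of its own generator $A(t_j^n)$, and control the accumulated cost of switching between consecutive graph norms at the partition points, converting to $\|\cdot\|_W$ only at the two ends via Lemma \ref{equinorm}. The sole difference is bookkeeping: the paper bounds each transition ratio by $1+\Delta_j^n$ with $\Delta_j^n=\int_{t_{j-1}^n}^{t_j^n}2\|A(r)\|_{W\to H}\|A'(r)\|_{W\to H}\,dr$ and then controls the product via the AM--GM inequality and a characterization of $e$, whereas you obtain the per-transition factor $e^{CT/n}$ directly from a Gronwall inequality for $t\mapsto\|y\|_{A(t)}^2$, so the product telescopes immediately to $e^{CT}$.
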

\begin{proof} 

For $x\in W$ with $||x||=1$, we have
\begin{align*}
&\frac{||x||_{A(t_k^n)}^2}{||x||_{A(t_{k-1}^n)}^2}\\
&=\frac{1+||A(t_k^n)x||^2}{1+||A(t_{k-1}^n)x||^2}\\
&=1+\frac{||A(t_k^n)x||^2-||A(t_{k-1}^n)x||^2}{1+||A(t_{k-1}^n)x||^2}\\
&\leq 1+|||A(t_k^n)x||^2-||A(t_{k-1}^n)x||^2|\\
&\leq 1+\left|\ \int\limits_{t_{k-1}^n}^{t_k^n}\frac{d}{dr}||A(r)x||^2dr\right|\\
&\leq 1+\int\limits_{t_{k-1}^n}^{t_k^n}2||A(r)||_{W\rightarrow H}||A'(r)||_{W\rightarrow H}dr\\
&=:1+\Delta_k^n,\\
\end{align*}
where $\Delta_k^n$ is defined by the last equality.
For $s,t\in [t_{k-1}^n,t_k^n]$, $Q_n(t,s)$ is an isometry with respect to $||\cdot||_{A(t_k^n)}$.
From this we obtain for $s\in [t_{k-1}^n,t_k^n]$, $t\in [t_{l-1}^n,t_l^n]$, $k<l$ (the cases $k=l$ and $k>l$ work in the same way):
\begin{align*}
&||Q_n(t,s)||_{||\cdot||_{A(t_k^n)}\rightarrow ||\cdot||_{A(t_l^n)}}\\
&\leq||Q_n(t,t_{l}^n)||_{||\cdot||_{A(t_{l}^n)}\rightarrow ||\cdot||_{A(t_l^n)}}\left(\prod\limits_{j=k+1}^{l}||Q_n(t_j^n,t_{j-1}^n)||_{||\cdot||_{A(t_j^n)}}||id||_{||\cdot||_{A(t_{j-1}^n)}\rightarrow ||\cdot||_{A(t_j^n)}}\right)\\
& \cdot||Q_n(t_k^n,s)||_{||\cdot||_{A(t_{k}^n)}\rightarrow ||\cdot||_{A(t_k^n)}}\\
&=\prod\limits_{j=k+1}^{l-1}||id||_{||\cdot||_{A(t_{j-1}^n)}\rightarrow ||\cdot||_{A(t_j^n)}}\\
&=\prod\limits_{j=k+1}^{l-1}(1+\Delta_j^n)^\frac{1}{2}.
\end{align*}
 Write
$$\Delta:=\sum\limits_{j=0}^n\Delta_j^n=\int\limits_{0}^{T}2||A(r)||_{W\rightarrow H}||A'(r)||_{W\rightarrow H}dr.$$
The inequality of arithmetic and geometric mean gives
$$\prod\limits_{j=k+1}^{l-1}(1+\Delta_j^n)^\frac{1}{l-k-1}\leq\frac{1}{l-k-1}\sum\limits_{j=k+1}^{l-1}(1+\Delta_j^n).$$
Thus we can estimate 
\begin{align*}
\prod\limits_{j=k+1}^{l-1}(1+\Delta_j^n)^\frac{1}{2}&\leq \left(\frac{1}{l-k-1}\sum\limits_{j=k+1}^{l-1}(1+\Delta_j^n)\right)^\frac{l-k-1}{2}\\
&\leq\left(1+\frac{\Delta}{l-k-1}\right)^\frac{l-k-1}{2}\\
&=\left(1+\frac{\Delta}{l-k-1}\right)^{\frac{l-k-1}{\Delta}\frac{\Delta}{2}}\\
&\leq \left(\sup\limits_{m\in \N}\left(\left(1+\frac{\Delta}{m}\right)^\frac{m}{\Delta}\right)\right)^\frac{\Delta}{2}\\
&=e^{\frac{\Delta}{2}},
\end{align*}
using a suitable characterization of Euler's number $e$.
As all Graph norms of $A(t)$ are uniformly equivalent by Lemma \ref{equinorm} and $\Delta$ is independent of $s$, $t$ and $n$, we can conclude that
$Q_n(s,t)$ is bounded in $B(W)$ by some constant $C$ independent of $s$, $t$ and $n$.
\end{proof}

We are interested in taking the limit for $n\rightarrow\infty$. For this we calculate (using that the $Q_n(s,t)$ are isometries on $H$ and uniformly bounded on $W$) for $x\in W$
\begin{align*}
\norm{Q_n(t,s)x-Q_m(t,s)x} &=\norm{Q_n(t,s)Q_m(s,s)x-Q_n(t,t)Q_m(t,s)x}\\
&=\norm{\int\limits_s^t \pd{r} (Q_n(t,r)Q_m(r,s)x)dr}\\
&=\norm{\int\limits_s^t Q_n(t,r)(iA_m(r)-iA_n(r))Q_m(r,s)xdr}\\
&\leq\left|C\int\limits_s^t ||A_m(r)-A_n(r)||_{W \rightarrow H}dr\right|||x||_W.
\end{align*}
Here we used Lemma \ref{prodiff} for existence and form of the derivative. Arguments like this one will be used several times in his section to estimate differences. 
As $A:[0,T]\rightarrow B(W,H)$ is a continuous function on a compact domain, it is uniformly continuous. Thus $A_n(r)$ converges to $A(r)$ uniformly in $r$, whence
$$||A_n(r)-A_m(r)||_{W\rightarrow H}$$
converges to 0 for $n,m\rightarrow \infty$, uniformly in $r$. 

By the above estimate, we can conclude that $Q_n(t,s)x$ converges to some limit $Q(t,s)x$ uniformly in $s$ and $t$. As all $Q_n(s,t)$ are isometries with respect to the norm of $H$, the operator $Q(s,t)$ defined in this way is also an isometry, hence it extends to a map from $H$ to $H$. For $y\in H$ and $\epsilon >0$, choose $x\in W$ with $||x-y||<\frac{\epsilon}{3}$. For $n$ large enough such that $||Q(t,s)x-Q_n(t,s)x||<\frac{\epsilon}{3}$ for all $s$ and $t$, we get
\begin{align*}
&||Q(t,s)y-Q_n(t,s)y||\\
&\leq ||Q(t,s)(x-y)||+||Q_n(t,s)(x-y)||+||Q(t,s)x-Q_n(t,s)x||\\
&\leq 2||x-y||+\frac{\epsilon}{3}\\
&\leq\epsilon.
\end{align*}
Thus $Q_n(t,s)y$ converges to $Q(t,s)y$ uniformly in $t$ and $s$. We can conclude that $Q(t,s)$ is strongly continuous in $B(H)$. 

As $Q_n$ satisfies conditions \ref{a},\ref{b} and \ref{c} from Theorem \ref{evop}, so does $Q$.\\\\
To show condition 4. and the second part of condition 6., we need the following lemma.

\begin{lemma}
\label{inteq}
If $E,F:[0,T]\times [0,T]\rightarrow B(H)$ are strongly continuous families of operators, there is a unique, strongly continuous family $V:[0,T]\times [0,T]\rightarrow B(H)$ such that for all $x\in H$, we have
$$V(t,s)x=E(t,s)x+\int\limits_s^t V(t,r)F(r,s)xdr$$
\end{lemma}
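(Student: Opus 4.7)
The plan is Picard iteration for this Volterra-type equation. I set $V_0(t,s):=E(t,s)$ and recursively define
$$V_{n+1}(t,s)x := E(t,s)x + \int_s^t V_n(t,r)F(r,s)x\,dr,$$
then show that $V_n$ converges in the $B(H)$ operator norm, uniformly in $(t,s)$, to the desired $V$. First I invoke Lemma \ref{dbound} to obtain uniform bounds $C_E := \sup_{s,t}\norm{E(t,s)}$ and $C_F := \sup_{s,t}\norm{F(t,s)}$. By induction on $n$ the integrand $r\mapsto V_n(t,r)F(r,s)x$ is norm-continuous in $r$ (using strong continuity of $V_n(t,\cdot)$ and of $F$ together with the operator bounds below), so the Bochner integral is well-defined.

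The decisive estimate is
$$\norm{(V_{n+1}-V_n)(t,s)} \le C_E\frac{(C_F|t-s|)^{n+1}}{(n+1)!},$$
proved by induction: the base case is immediate from $V_1-V_0 = \int_s^t E(t,r)F(r,s)\cdot\,dr$, and the inductive step follows by pulling $C_F$ out of $F(r,s)$ inside the integral and then integrating the polynomial in $|t-r|$. These differences are summable in operator norm, uniformly in $(t,s)\in[0,T]^2$, so $V_n(t,s)$ converges in operator norm, uniformly in $(t,s)$, to a bounded operator family $V(t,s)$.

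Strong continuity of each $V_n$ in both variables simultaneously is obtained by induction: the inductive step follows exactly as in Lemma \ref{diffint}, splitting the integral into three pieces to absorb changes in $s$, in the upper limit $t$, and in the integrand, then applying dominated convergence with the uniform bounds. Uniform operator-norm convergence $V_n\to V$ transfers strong continuity to the limit. Passing to the limit in the defining recursion is legitimate because $\norm{V_n(t,\cdot) - V(t,\cdot)}$ tends to zero uniformly in operator norm, so the integrands converge uniformly in $r$ and the Bochner integrals converge; this yields the desired integral equation for $V$.

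For uniqueness, if two strongly continuous solutions $V$ and $\tilde V$ exist, both are uniformly bounded by Lemma \ref{dbound}, and their difference $Z := V - \tilde V$ satisfies the homogeneous equation $Z(t,s)x = \int_s^t Z(t,r)F(r,s)x\,dr$. Iterating this identity $n$ times and estimating as above gives
$$\norm{Z(t,s)} \le \sup_{r,r'}\norm{Z(r,r')}\cdot\frac{(C_F T)^n}{n!},$$
which forces $Z=0$ as $n\to\infty$. The main obstacle will be the strong-continuity bookkeeping for the iterates when $(t,s)$ vary together and the lower integration limit $s$ moves, but this is exactly the scenario handled by Lemma \ref{diffint}; everything else is clean Gr\"onwall-type estimation.
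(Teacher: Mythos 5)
Your proof is correct and essentially the same as the paper's: your Picard iterates are exactly the partial sums of the series $\sum_m V_m$ that the paper constructs by setting $V_0=E$ and $V_m(t,s)x=\int_s^t V_{m-1}(t,r)F(r,s)x\,dr$, with the same factorial estimates $CK^m|t-s|^m/m!$ guaranteeing uniform convergence and permitting the interchange of sum and integral. The only cosmetic difference is in uniqueness, where you iterate the homogeneous equation directly instead of citing Gr\"onwall's inequality as the paper does; both yield the same bound and conclusion.
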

\begin{proof}
Let 
$$V_0:=E$$ 
and define inductively
$$V_m(t,s)x:=\int\limits_s^t {V_{m-1}}(t,r)F(r,s)xdr.$$
Claim:
$$||V_m(t,s)||\leq CK^m\frac{(t-s)^m}{m!},$$
for all $s,t\in [0,T]$, with 
$$C:=sup_{s,t\in [0,T]}||E(t,s)||$$
and 
$$K:=sup_{s,t\in [0,T]}||F(t,s)||.$$
Note that $K$ and $C$ are finite by Banach-Steinhaus. We will prove the claim by induction. $||V_0(t,s)||\leq C$ holds by definition. Assume the claim holds for all $m<n$. Then we have
\begin{align*}
||V_n(t,s)x||\leq \int\limits_s^t \norm{V_{n-1}(t,r)}\norm{F(r,s)}||x||dr\\
\leq \int\limits_s^t CK^{n-1}\frac{(t-r)^{n-1}}{(n-1)!}K||x||dr
&=CK^{n}\frac{(t-s)^n}{n!}||x||,
\end{align*}
concluding the induction and proving the claim. The claim implies that the series
$$V(t,s):=\sum\limits_{m=0}^\infty V_m(t,s)x,$$
converges uniformly in $t$, $s$ and $x$ for fixed $||x||$. The operators $V(t,s)$ defined this way are thus a strongly continuous family of bounded operators. Moreover, uniform convergence  allows us to interchange the infinite sum with an integral over a compact interval. Thus we get
\begin{align*}
E(t,s)+\int\limits_s^t V(t,r)F(r,s)xdr&=V_0(t,s)x+\sum\limits_{m=0}^\infty\int\limits_s^t V_m(t,r)F(r,s)xdr\\
&=V_0(t,s)x+\sum\limits_{m=0}^\infty V_{m+1}(t,s)x\\
&=V(t,s)x,
\end{align*}
concluding the proof of existence. For uniqueness, assume $U$ was another solution to the equation.
Then we have
$$U(t,s)x-V(t,s)x=\int\limits_s^t (U(t,r)-V(t,r))F(r,s)xdr.$$
The norm of a strongly continuous family is lower semi-continuous and hence measurable. Since $U(t,r)$ and $V(t,r)$ are uniformly bounded by Banach-Steinhaus, the norm of their difference is bounded and hence integrable.
Thus we can estimate
$$||U(t,s)-V(t,s)||\leq K\int \limits_s^t ||V(t,r)-U(t,r)||dr.$$
By Gronwalls inequality, this implies $U(t,s)-V(t,s)=0$, proving uniqueness.
\end{proof}
With the help of this, we shall now show the following lemma.
\begin{lemma}
$Q(t,s)$ maps $W$ to $W$ boundedly and $(t,s)\mapsto Q(t,s)|_W$ is a strongly continuous family in $B(W)$.
\end{lemma}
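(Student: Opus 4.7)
The plan is to show (i) that $Q(t,s)$ maps $W$ into $W$ with a uniform operator-norm bound, and then (ii) to deduce strong continuity in $B(W)$ by applying Lemma~\ref{inteq}.

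For (i), Lemma~\ref{Qbound} gives a uniform bound $\|Q_n(t,s)\|_{B(W)} \leq C$. Fix $x \in W$: the sequence $(Q_n(t,s)x)_n$ is bounded in the Hilbert space $W$ and thus has a weakly convergent subsequence. Since the inclusion $W \hookrightarrow H$ is continuous and $Q_n(t,s)x \to Q(t,s)x$ in $H$, the weak $W$-limit of any such subsequence must coincide with $Q(t,s)x$. By weak lower semicontinuity of the norm, $Q(t,s)x \in W$ and $\|Q(t,s)x\|_W \leq C\|x\|_W$.

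For (ii), compare $Q$ with the free evolution $Q_0(t,s) := \exp(i(t-s)A(0))$, which by the previous lemma is strongly continuous on $W$. Each $Q_n$ satisfies the piecewise Duhamel identity $Q_n(t,s) = Q_0(t,s) + \int_s^t Q_0(t,r)\, i(A_n(r)-A(0))\, Q_n(r,s)\, dr$ on $W$, obtained by differentiating $Q_0(t,r)Q_n(r,s)$ in $r$ (using Lemma~\ref{prodiff}) and integrating between the breakpoints of the partition. Passing $n \to \infty$, using the already established uniform convergence $A_n \to A$ in $B(W,H)$ together with the weak compactness of $(Q_n(r,s)x)_n$ in $W$ from step (i), yields
\[
Q(t,s) = Q_0(t,s) + \int_s^t Q_0(t,r)\, i(A(r)-A(0))\, Q(r,s)\, dr
\]
on inputs from $W$, with the integral taken in $H$.

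The integrand here sends $W$ to $H$, so Lemma~\ref{inteq} does not apply in $B(W)$ as written. The trick is to introduce $\tilde C(r) := (A(0)+i)^{-1} i(A(r)-A(0))$, which lies in $B(W)$ and is strongly continuous there because $\|\tilde C(r)x\|_W = \|i(A(r)-A(0))x\|_H$ and $A$ is strongly continuous in $B(W,H)$. Writing $i(A(r)-A(0)) = (A(0)+i)\tilde C(r)$ on $W$ and using that $(A(0)+i)$ commutes with $Q_0$, one may pull $(A(0)+i)$ outside the integral and then apply $(A(0)+i)^{-1}$ to both sides. The resulting identity lives entirely in $B(W)$ and matches the form required by Lemma~\ref{inteq}, whose proof transfers verbatim from $B(H)$ to any Banach space and in particular to $B(W)$. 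The uniqueness clause then identifies the unique strongly continuous solution with $Q(t,s)|_W$, giving the desired strong continuity. The main obstacle is precisely this reshaping: the natural Duhamel integrand takes values out of $W$, and one needs the resolvent of $A(0)$ together with the commutativity of $(A(0)+i)$ and $Q_0$ to recast the equation inside $B(W)$ before Lemma~\ref{inteq} can be brought to bear.
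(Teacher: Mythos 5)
Your step (i) is correct, and it is a genuinely different way to obtain the first half of the statement: the paper only gets the bound on $\|Q(t,s)\|_{B(W)}$ at the very end, as a by-product of a formula for $Q|_W$, whereas your weak-compactness argument extracts it directly from Lemma \ref{Qbound}. The Duhamel identity $Q(t,s)x=Q_0(t,s)x+\int_s^t Q_0(t,r)\,i(A(r)-A(0))\,Q(r,s)x\,dr$ for $x\in W$ is also correct (the limit $n\to\infty$ can be justified, e.g.\ weakly in $H$, using the uniform $B(W)$-bound).

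The gap is in the final reshaping. Writing $i(A(r)-A(0))=(A(0)+i)\tilde C(r)$, commuting $(A(0)+i)$ past $Q_0(t,r)$, pulling it out of the integral and applying $(A(0)+i)^{-1}$ produces
\begin{equation*}
(A(0)+i)^{-1}Q(t,s)x=(A(0)+i)^{-1}Q_0(t,s)x+\int_s^t Q_0(t,r)\,\tilde C(r)\,Q(r,s)x\,dr .
\end{equation*}
This identity does take values in $W$, but it is \emph{not} of the form treated by Lemma \ref{inteq} (nor of the mirrored form with the unknown as the right-hand factor, which would otherwise be unproblematic): the unknown appears as $(A(0)+i)^{-1}Q(t,s)$ outside the integral but as $Q(r,s)$, without the resolvent, inside it. Since $(A(0)+i)^{-1}$ commutes with $Q_0$ but not with $Q$, you cannot cancel the resolvent to obtain a closed Volterra equation for $Q|_W$; and if you declare $(A(0)+i)^{-1}Q$ to be the unknown, the kernel reintroduces the unbounded operator. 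This commutation defect between $Q$ and the resolvent is exactly the crux, and it is what the paper's proof is built to handle: it differentiates $Q_n(t,r)R(r)Q_n(r,s)$ with $R(r)=(A(r)+i)^{-1}$ sandwiched in the middle, so that $R'=-RA'R$ supplies an extra resolvent and one arrives at $Q(t,s)R(s)=R(t)Q(t,s)+\int_s^t Q(t,r)R(r)A'(r)R(r)Q(r,s)\,dr$; Lemma \ref{inteq} is then used twice (existence with $E=Q$, $F(r,s)=A'(r)R(r)Q(r,s)$ to build an auxiliary family $V$, and uniqueness with $E=RQ$) to conclude $Q(t,s)R(s)=R(t)V(t,s)$, hence $Q|_W=RVR^{-1}$, which yields boundedness and strong continuity in $B(W)$ simultaneously. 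Two further points would need attention even with a correct equation: pulling the unbounded $(A(0)+i)$ out of the $H$-valued integral requires the integrand to be Bochner integrable in $W$ (measurability of $r\mapsto Q(r,s)x$ in $W$ is not automatic), and the uniqueness clause of Lemma \ref{inteq} is proved for strongly continuous solutions, whereas $Q|_W$ is not yet known to be strongly continuous in $B(W)$, so identifying it with the constructed solution would need a separate Gronwall argument based on your uniform bound (with, say, weak $W$-continuity providing measurability of the norm).
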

\begin{proof}
Let $R(t):=(A(t)+i)^{-1}$. As $R$ is strongly differentiable as a map $H\rightarrow W$ by \ref{idiff}, the composition $Q_n(t,r)R(r)Q_n(r,s)$ is strongly differentiable in $B(W,H)$ as a function of $r$, except at $r=t_k^n$, where it is still strongly continuous. We can calculate for $x\in H$
\begin{align*}
R(t)Q(t,s)x-Q(t,s)R(s)x&=\limn R(t)Q_n(t,s)x-Q_n(t,s)R(s)x\\
&=\limn \int\limits_s^t\pd{r}(Q_n(t,r)R(r)Q_n(r,s)x)dr\\
&=\limn \int\limits_s^t Q_n(t,r)(-iA(r)R(r)+R'(r)+R(r)iA(r))Q_n(r,s)xdr\\
&=\limn \int\limits_s^t Q_n(t,r)R'(r)Q_n(r,s)xdr
\end{align*}
As the $Q_n$ are unitary in $B(H)$ and $R'(r)$ is strongly continuous and hence uniformly bounded, the integrand is bounded, so we can apply dominated convergence to pull the limit into the integral. We get
\begin{align*}
R(t)Q(t,s)x-Q(t,s)R(s)x&=\int\limits_s^t \limn Q_n(t,r)R'(r)Q_n(r,s)xdr\\
&=\int\limits_s^t Q(t,r)R'(r)Q(r,s)xdr\\
&=-\int\limits_s^t Q(t,r)R(r)A'(r)R(r)Q(r,s)xdr
\end{align*}
which we can reformulate as
$$Q(t,s)R(s)x=R(t)Q(t,s)x+\int\limits_s^t Q(t,r)R(r)A'(r)R(r)Q(r,s)xdr.$$
Let $V(t,s)$ be the unique family satisfying 
$$V(t,s)x=Q(t,s)x+\int\limits_s^t V(t,r)A'(r)R(r)Q(r,s)xdr$$
for all $x\in H$, whose existence is guaranteed by Lemma \ref{inteq} with $E:=Q$ and $F(t,s):=A'(t)R(t)Q(t,s)$.
Then both $H(t,s)=Q(t,s)R(s)$ and $H(t,s)=R(t)V(t,s)$ are solutions to
$$H(t,s)=R(t)Q(t,s)+\int\limits_s^t H(t,r)A'(r)R(r)Q(r,s)xdr.$$
By the uniqueness in Lemma \ref{inteq} (this time with $E(t,s)=R(t)Q(t,s)$), this means
$$Q(t,s)R(s)=R(t)V(t,s).$$
Reformulating this as
$$Q(t,s)|_W=R(t)V(t,s)R(s)^{-1}$$
yields the desired result, as $R$, $V$ and $R^{-1}$ are strongly continuous in $B(H,W)$, $B(H)$ and $B(W,H)$ respectively.
\end{proof}
The last remaining condition to check is that $Q$ has the desired strong derivatives.
\begin{lemma}
$Q(t,s)|_W$ is strongly continuously differentiable in both $s$ and $t$ as a function into $B(W,H)$, with derivatives
$$\pdt Q(t,s)=iA(t)Q(t,s)$$ 
and
$$\pds Q(t,s)=-Q(t,s)iA(s).$$
\end{lemma}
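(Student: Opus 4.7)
The plan is to establish the two integral equations
\begin{align*}
Q(t,s)x &= x + \int_s^t iA(r)Q(r,s)x\,dr,\\
Q(t,s)x &= x + \int_s^t Q(t,u)iA(u)x\,du
\end{align*}
in $H$ for every $x\in W$, and then to conclude by the fundamental theorem of calculus. Both integrands will be continuous from $[0,T]$ into $H$: the first because $A$ is strongly continuous in $B(W,H)$ and $Q(\cdot,s)|_W$ is strongly continuous in $B(W)$ by the previous lemma; the second because $A(\cdot)x$ is continuous into $H$ for $x\in W$ and $Q(t,\cdot)$ is strongly continuous in $B(H)$. Applying the fundamental theorem of calculus will then produce the claimed derivative formulas pointwise in $W$, after which strong continuity of the derivative families in $B(W,H)$ follows at once from Lemma \ref{prodiff} applied to the strongly continuous families $A$ in $B(W,H)$, $Q|_W$ in $B(W)$, and $Q$ in $B(H)$.

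The work goes into passing to the limit $n\to\infty$ in the corresponding identities for the approximants $Q_n$. On each subinterval $[t_{k-1}^n,t_k^n]$, on which $A_n$ is constant, $Q_n(\cdot,s)|_W$ is strongly differentiable in $B(W,H)$ with $\pd{r}Q_n(r,s)=iA_n(r)Q_n(r,s)$; concatenating across partition points (where $Q_n$ is merely continuous) yields
$$Q_n(t,s)x = x + \int_s^t iA_n(r)Q_n(r,s)x\,dr$$
in $H$ for $x\in W$. The main obstacle is that we only know $Q_n(r,s)x\to Q(r,s)x$ in $H$ together with a uniform $W$-bound, so we cannot directly push the unbounded operator $A(r)$ through the limit to conclude $A(r)Q_n(r,s)x\to A(r)Q(r,s)x$.

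The device to circumvent this is to pair both sides of the $Q_n$-identity with a fixed $y\in W$ and to use the self-adjointness of $A_n(r)=A(t_k^n)$ to shift the unbounded operator onto $y$:
$$\<Q_n(t,s)x-x,\,y\> = -i\int_s^t\<Q_n(r,s)x,\,A_n(r)y\>\,dr.$$
Now $A_n(r)y\to A(r)y$ uniformly in $r$ in $H$ (because $r\mapsto A(r)y$ is continuous from $[0,T]$ into $H$ and $|t_k^n-r|\leq T/n$), while $Q_n(r,s)x\to Q(r,s)x$ uniformly in $r$ in $H$ by the construction of $Q$, and both sequences are bounded in $H$. Dominated convergence then gives
$$\<Q(t,s)x-x,\,y\> = -i\int_s^t\<Q(r,s)x,\,A(r)y\>\,dr = \<\int_s^t iA(r)Q(r,s)x\,dr,\,y\>,$$
after restoring self-adjointness of $A(r)$ on the limit side. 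Since $y\in W$ is arbitrary and $W$ is dense in $H$, the first integral equation holds in $H$.

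The second equation is proved identically, starting from $Q_n(t,s)x = x+\int_s^t Q_n(t,u)iA_n(u)x\,du$ (obtained by differentiating in the second argument on each subinterval) and using $Q_n(t,u)^*=Q_n(u,t)$ to move the adjoint onto the convergent sequence $Q_n(u,t)y$, which converges in $H$ uniformly in $u$. With both integral equations established, the fundamental theorem of calculus applied to their continuous $H$-valued integrands yields $\pdt Q(t,s)x=iA(t)Q(t,s)x$ and $\pds Q(t,s)x=-Q(t,s)iA(s)x$ for every $x\in W$, completing the proof.
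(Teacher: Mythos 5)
Your proof is correct, but it takes a genuinely different route from the paper. The paper works at the diagonal: using the uniform $B(W)$-bound on the approximants $Q_n$ from Lemma \ref{Qbound}, it estimates $\norm{\exp((t-s)iA(t))x-Q(t,s)x}\leq C\norm{x}_W\int_s^t\norm{A(r)-A(t)}_{W\rightarrow H}dr=o(|t-s|)$, concludes that $Q$ has the same derivatives as the exponential at $t=s$ (namely $iA(s)$ and $-iA(t)$), and then propagates to arbitrary $(t,s)$ via the cocycle identity $Q(t,s)=Q(t,r)Q(r,s)$ and the product rule (Lemma \ref{prodiff}). You instead pass to the limit in the Duhamel-type integral identities satisfied by $Q_n$, pairing against a fixed $y\in W$ and using self-adjointness of $A_n(r)$ (respectively unitarity, $Q_n(t,u)^*=Q_n(u,t)$) to shift the problematic factor onto the test vector, and then invoke the fundamental theorem of calculus; this neatly avoids pushing $A(r)$ through the $H$-limit of $Q_n(r,s)x$, and in fact it does not need the uniform $W$-bound of Lemma \ref{Qbound} at this stage, only the isometry property and the uniform strong convergence $Q_n\rightarrow Q$ on $H$. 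The price is that you lean more heavily on the preceding lemma ($Q(W)\subseteq W$ with strong continuity in $B(W)$): once to rewrite $\<Q(r,s)x,A(r)y\>$ as $\<A(r)Q(r,s)x,y\>$ in the limit, and once to get continuity of $r\mapsto A(r)Q(r,s)x$ in $H$ so that the fundamental theorem of calculus applies; that lemma is available at this point, so this is fine. Two cosmetic points: the strong continuity of the derivative families is really the composition-of-strongly-continuous-families lemma rather than the product rule Lemma \ref{prodiff}, and the $Q_n$-identities should be justified by piecewise differentiability plus continuity at the partition points $t_k^n$ (as the paper does implicitly in its own telescoping estimates); neither affects correctness.
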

\begin{proof}
The goal is to show that at $t=s$, $Q(t,s)$ has the same derivatives as \newline $exp((t-s)iA(t))$. For this we need to estimate the difference around $s=t$. We have for $x\in W$:
\begin{align*}
exp((t-s)iA(t))x-Q_n(t,s)x&=-\int\limits_s^t\pd{r}(exp((t-r)iA(t))Q_n(r,s)x)dr\\
&=-\int\limits_s^t exp((t-r)iA(t))i(-A(t)+A_n(r))Q_n(r,s)xdr\\
\end{align*}
Using that $exp((t-s)iA(t))$ is an isometry on $H$ and $||Q_n(t,s)||_{W \rightarrow W}$ has a uniform bound C by Lemma \ref{Qbound}, we have
$$||exp((t-s)iA(t))x-Q_n(t,s)x||\leq C||x||\int\limits_s^t||A_n(r)-A(t)||_{W \rightarrow H}dr.$$
Taking the limit and using dominated convergence (as $A(t)$ and hence also $A_n(t)$ are uniformly bounded in $B(W,H)$), we obtain
\begin{align*}
||exp((t-s)iA(t))x-Q(t,s)x||&\leq C||x||\int\limits_s^t\limn||A_n(r)-A(t)||_{W \rightarrow H}dr\\
&=C||x||\int\limits_s^t||A(r)-A(t)||_{W \rightarrow H}dr.
\end{align*}
As $A$ is norm continuous in $B(W,H)$, we know that  
$$I_{max}(s,t):=\sup\limits_{r\in[s,t]}||A(r)-A(t)||_{W \rightarrow H}$$
 converges to 0 for $|t-s|\rightarrow 0$, both for fixed $t$ and for fixed $s$. Thus 
$$|t-s|^{-1}||exp((t-s)iA(t))x-Q(t,s)x||\leq C||x||I_{max}(s,t)$$
converges to 0 as well, so we have
$$\pdt (exp((t-s)iA(t))x-Q(t,s)x)|_{t=s}= \lim\limits_{t\rightarrow s}\ (t-s)^{-1}(exp((t-s)iA(t))x-Q(t,s)x)=0$$
and
$$\pds (exp((t-s)iA(t))x-Q(t,s)x)|_{s=t}= \lim\limits_{s\rightarrow t}\ (s-t)^{-1}(exp((t-s)iA(t))x-Q(t,s)x)=0.$$
We can conclude that $Q(t,s)$ is strongly differentiable both in $s$ and in $t$ at $t=s$ with
$$\pdt Q(t,s)|_{t=s}=\pdt exp((t-s)iA(t))|_{t=s}=iA(s)$$
and
$$\pds Q(t,s)|_{s=t}=\pds exp((t-s)iA(t))|_{s=t}=-iA(t).$$
The result for $s\neq t$ now follows, as 
$$\pdt Q(t,s)=\pd{r}Q(r,t)Q(t,s)|_{r=t}=iA(t)Q(t,s)$$
and
$$\pds Q(t,s)=\pd{r}Q(t,s)Q(s,r)|_{r=s}=-Q(t,s)iA(s).$$
\end{proof}
This concludes the proof of Theorem \ref{evop}.
\end{proof}
\begin{thm}
\label{uniq}
For every $s\in [0,T]$ and $Q$ as constructed above, $U(t)=Q(t,s)|_W$ is the only strongly continuous family of operators in $B(W)$ that is strongly differentiable in $B(W,H)$ and satisfies 
$$U(s)=I$$
and
$$\ddt U(t)=iA(t)U(t).$$
In particular, $Q$ is the unique family satisfying the conditions of Theorem \ref{evop}.
\end{thm}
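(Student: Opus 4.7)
The plan is to exploit the evolution property $Q(t,s)Q(s,t)=I$ together with a product rule argument: given any solution $U$, we differentiate $V(t):=Q(s,t)U(t)$ and show that the two contributions from the product rule cancel, so that $V$ is constant, and comparing with $V(s)=I$ forces $U(t)=Q(t,s)$.

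More precisely, I would first observe that Lemma \ref{prodiff} applies with $X=W$, $Y=H$, $Y_0=W$, $Z=H$, $S(t)=U(t)$ and $R(t)=Q(s,t)$. The hypotheses on $S=U$ are exactly those assumed in the theorem (strongly continuous in $B(W)=B(X,Y_0)$ and strongly continuously differentiable in $B(W,H)=B(X,Y)$). For $R(t)=Q(s,t)$, strong continuity in $B(H)$ follows from condition 6, and strong differentiability of its restriction to $W$, viewed as a family in $B(W,H)$, is condition 5, with $\pd{t}Q(s,t)=-Q(s,t)iA(t)$. Therefore Lemma \ref{prodiff} yields that $V(t)=Q(s,t)U(t)$ is strongly continuously differentiable in $B(W,H)$ with
\[
V'(t)=-Q(s,t)iA(t)\,U(t)+Q(s,t)\,iA(t)U(t)=0.
\]

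For any $x\in W$, the function $t\mapsto V(t)x$ is then a differentiable $H$-valued curve with vanishing derivative, hence constant by the fundamental theorem of calculus. Since $V(s)x=Q(s,s)U(s)x=x$, we get $Q(s,t)U(t)x=x$ for all $t\in [0,T]$ and $x\in W$. Composing on the left with $Q(t,s)$ and using properties \ref{a} and \ref{b} of Theorem \ref{evop} to get $Q(t,s)Q(s,t)=Q(t,t)=I$, this yields $U(t)x=Q(t,s)x$ for every $x\in W$, i.e.\ $U(t)=Q(t,s)|_W$. The uniqueness claim for $Q$ itself in Theorem \ref{evop} follows, since conditions \ref{a} and 5 there specialize (at fixed $s$) to the hypotheses on $U$.

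The only real subtlety is making sure that Lemma \ref{prodiff} is being applied with the correct choice of spaces so that both terms in the product rule actually live in the same space $B(W,H)$ and cancel; this is precisely the role of the auxiliary space $Y_0=W$ in that lemma, since $U(t)$ maps into $W$, allowing $-Q(s,t)iA(t)$ (only defined on $W$) to be composed with $U(t)$. Beyond that, everything is a straightforward application of results already established.
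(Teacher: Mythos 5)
Your proposal is correct and is essentially the paper's argument: both rest on the product rule (Lemma \ref{prodiff}) applied to a composition of the backwards evolution with $U$, with the two $iA$ terms cancelling. The paper differentiates $r\mapsto Q(t,r)U(r)x$ in the intermediate variable and integrates from $s$ to $t$ to get $U(t)x-Q(t,s)x=0$ directly, whereas you show $t\mapsto Q(s,t)U(t)$ is constant and then left-compose with $Q(t,s)$ — a cosmetic variation of the same proof.
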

\begin{proof}
Assume $U$ is a family satisfying the conditions given above. Then we can calculate for any $x\in H$:
\begin{align*}
U(t)x-Q(t,s)x&=\int\limits_s^t\pd{r}Q(t,r)U(r)xdr\\
&=\int\limits_s^t Q(t,r)(-iA(r)+iA(r))U(r)xdr\\
&=0.
\end{align*}
\end{proof}
\subsection{The Cauchy Property}
$Q(t,s)x$ can be thought of as taking the function $f:[0,T]\rightarrow H$ satisfying $f(s)=x$ and $Df=0$ and evaluating it at $t$, where in this case $f=Q(\cdot,s)$. This can be generalized in the following definition
\begin{df}
Denote by $ev_t$ the evaluation map at $t$, i.e.
$$ev_t(f):=f(t).$$
We say that a closed unbounded operator $R$ in $\LH$ has the Cauchy property, if $\Dom(R)$ is a subset of $C([0,T],H)$ (with maximum norm) with bounded inclusion and for all $t\in [0,T]$ the map
$$R\oplus ev_t : \Dom(R)\rightarrow \LH \oplus H$$
is an isomorphism. 
\end{df}
\head{Remark:} Another way of thinking about the Cauchy property is to say that for each $s\in[0,T]$, $x\in H$ and $g\in\LH$, the initial value problem
$$Rf=g$$
$$f(s)=x$$
has a unique solution (in $\Dom(R)$).
For any $R$ with the Cauchy property, the evolution operator $Q(t,s):H\rightarrow H$ for $t,s\in[0,T]$ can be defined by
$$Q(t,s)x=ev_t(R\oplus ev_s)^{-1}(0,x).$$ If $R=D$ has the Cauchy property, this agrees with the previous definition of $Q$, as
$$(D\oplus ev_s)Q(\cdot,s)x=(0,x)$$
and hence
$$ev_t(R\oplus ev_s)^{-1}(0,x)=ev_t(Q(\cdot,s)x)=Q(t,s)x.$$

We will now show that $D$ has the Cauchy property, starting with the following lemma:
\begin{lemma}
\label{cont}
$\Dom(D)$ (with graph norm) is a subspace of $C^0([0,T],H)$ and the inclusion is continuous.
\end{lemma}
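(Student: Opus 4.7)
The plan is to establish a Duhamel-type representation for classical solutions $f\in C^1([0,T],W)$, use the fact that the evolution operator $Q$ is pointwise isometric on $H$ to derive a uniform estimate $\sup_{t}\|f(t)\|\le C\|f\|_D$, and then extend to $\Dom(D)$ by density.

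First I would derive the Duhamel formula. For $f\in C^1([0,T],W)\subseteq\Dom(D)$ with $g:=Df=f'-iAf$, property 5 of Theorem \ref{evop} gives that $t\mapsto Q(0,t)$ is strongly continuously differentiable in $B(W,H)$ with derivative $-Q(0,t)iA(t)$. Applying Lemma \ref{prodiff} with $X=\C$, $Y_0=W$, $Y=Z=H$ to the product $Q(0,t)f(t)$ yields
$$\frac{d}{dt}\bigl(Q(0,t)f(t)\bigr)=-Q(0,t)iA(t)f(t)+Q(0,t)f'(t)=Q(0,t)g(t).$$
Integrating in $H$ and using $Q(t,0)=Q(0,t)^{-1}$ together with the cocycle property (properties 1--2 of Theorem \ref{evop}) produces
$$f(t)=Q(t,0)f(0)+\int_0^t Q(t,r)g(r)\,dr.$$

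Next, since every $Q(t,r)$ is an $H$-isometry, the reverse triangle inequality applied to the Duhamel formula gives
$$\bigl|\,\|f(t)\|-\|f(0)\|\,\bigr|\le\int_0^t\|g(r)\|\,dr\le\sqrt{T}\,\|g\|_{L^2}$$
for every $t\in[0,T]$. Squaring the inequality $\|f(0)\|\le\|f(t)\|+\sqrt{T}\|g\|_{L^2}$, integrating over $t\in[0,T]$, and using $(a+b)^2\le 2(a^2+b^2)$ eliminates $\|f(0)\|$ and yields a bound $\|f(0)\|\le C_1\bigl(\|f\|_{L^2}+\|g\|_{L^2}\bigr)$. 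Feeding this back into the forward direction produces $\sup_{t\in[0,T]}\|f(t)\|\le C\bigl(\|f\|_{L^2}+\|g\|_{L^2}\bigr)=C\|f\|_D$ for a constant $C$ depending only on $T$.

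Finally, for general $f\in\Dom(D)$ I would take $f_n\in C^1([0,T],W)$ with $f_n\to f$ and $Df_n\to Df$ in $\LH$, which exists since $D$ is by definition the closure of $D_0|_{[0,T]}$. The supremum bound applied to $f_n-f_m$ shows that $(f_n)$ is Cauchy in $C^0([0,T],H)$; its uniform limit $\tilde f$ is continuous, coincides with $f$ in $L^2$, and hence represents $f$. Passing the estimate to the limit gives the continuity of the inclusion. The main obstacle is eliminating $\|f(0)\|$ from the Duhamel bound: without the averaging step above, one is left with a pointwise estimate depending on the initial data, which the graph norm does not control directly.
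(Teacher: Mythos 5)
Your proof is correct, but it takes a genuinely different route from the paper's. The paper never touches the evolution operator in this lemma: it uses self-adjointness of $A(t)$ to get the pointwise identity $\frac{d}{dt}\norm{f(t)}^2=2\,\mathrm{Re}\<f(t),(Df)(t)\>$ for $f\in C^1([0,T],W)$, then integrates this over $r\in[s,t]$ and averages over $s\in[0,T]$ to obtain $T\norm{f(t)}^2\leq \norm{f}_{L^2}^2+T\norm{f}_D^2$, and finishes with the same density/extension argument you give. You instead invoke Theorem \ref{evop}: the Duhamel identity $f(t)=Q(t,0)f(0)+\int_0^t Q(t,r)(Df)(r)\,dr$ (legitimate at this point, since $Q$ is already constructed, and your derivation via Lemma \ref{prodiff} is sound; the same identity is proved right after this lemma, in the form $F_s\circ(D\oplus ev_s)=id$ on $C^1([0,T],W)$, in the Cauchy-property theorem), together with the fact that each $Q(t,r)$ is an $H$-isometry; your averaging-in-$t$ step to eliminate $\norm{f(0)}$ plays exactly the role of the paper's averaging over $s$. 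What the paper's route buys is economy of means: only self-adjointness of $A(t)$ enters, the estimate is a one-line energy identity, and nothing from Section 4.1 is needed. What your route buys is that the a priori bound and the explicit solution formula are obtained simultaneously, so the subsequent Cauchy-property proof could reuse your computation almost verbatim. Both arguments produce a constant depending only on $T$, and your limiting step (Cauchy in $C^0([0,T],H)$, uniform limit represents $f$) coincides with the paper's.
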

\head{Remark:} This implies that evaluation at some point is well-defined and bounded on $\Dom(D)$.
\begin{proof}

For $f\in C^1([0,T],W)$ and $t\in[0,T]$, we want to estimate $f(t)$ in terms of $||f||_{L^2}$ and $||Df||_{L^2}$. The idea is, that a bound on the $L^2$ norm prevents the function from being large everywhere and $Df$ gives us a bound on the derivative of $||f||$, so $f$ cannot be large on a small region only. The first observation is that $D$ is as good as the derivative when estimating the change in the norm. As $A(t)$ is self-adjoint, $\<x,A(t)x\>$ is real for all $t\in[0,T]$ and $x\in W$. Thus we have
\begin{align*}
\pdt ||f(t)||^2&=2Re(\<f(t),f'(t)\>)\\
&=2Re(\<f(t),f'(t)\>)-2Re(i\<f(t),A(t)f(t)\>)\\
&=2Re(\<f(t),f'(t)-iA(t)f(t)\>)\\
&=2Re(\<f(t),Df(t)\>).
\end{align*}
We can now estimate for any $t\in [0,T]$:
\begin{align*}
T\norm{f(t)}^2-\norm{f}_{L^2}^2
&= \int\limits_0^T\norm{f(t)}^2-\norm{f(s)}^2ds\\
&= \int\limits_0^T\int\limits_s^t\pd{r}\norm{f(r)}^2drds\\
&= \int\limits_0^T\int\limits_s^t2Re(\<f(r),Df(r)\>)drds\\
&\leq \int\limits_0^T2|Re\<\chi_{[s,t]}f,Df\>_{L^2}|ds\\
&\leq\int\limits_0^T2||\chi_{[s,t]}f||_{L^2}\cdot||Df||_{L^2}ds\\
&\leq\int\limits_0^T2||f||_{L^2}||Df||_{L^2}ds\\
&\leq T(\norm{f}_{L^2}^2+\norm{Df}_{L^2}^2)\\
&\leq T\norm{f}_{D}^2,
\end{align*}
We get
$$\norm{f(t)}^2\leq T^{-1} \norm{f}_{L^2}^2+\norm{f}_{D}^2\leq (1+T^{-1})\norm{f}_{D}^2.$$
Since this is true for all $T$, we have
$$||f||_{C^0}:=\max\limits_{t\in[0,T]}||f(t)||\leq (1+T^{-1})^\frac{1}{2}\norm{f}_{D}.$$
As $C^1([0,T],W)$ is dense in $\Dom(D)$, its inclusion into $C^0([0,T],H)$ extends uniquely to a continuous map on $\Dom(D)$. As both convergence in $\Dom(D)$ and in $C^0([0,T],H)$ imply convergence in $\LH$, the limits in both spaces agree, so $\Dom(D)\subseteq C^0([0,T],H)$ and the continuous extension is the inclusion of $\Dom(D)$ into $C^0([0,T],H)$.
\end{proof}

\begin{thm}
$D$ has the Cauchy property. More specifically
$$F_s(g,x)(t):=Q(t,s)x+\int\limits_s^tQ(t,r)g(r)dr$$
defines an inverse for $D\oplus ev_s$.
\end{thm}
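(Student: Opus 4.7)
The plan is to verify that $F_s$ lands in $\Dom(D)$ and is a two-sided inverse of $D\oplus ev_s$. I would proceed in three stages: first check the right-inverse property on nice initial data by direct computation, then extend by a closure argument, and finally establish injectivity.

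For the first stage, take $x\in W$ and $g\in S([0,T],W)\subseteq C^\infty([0,T],W)$. By condition 5 of Theorem \ref{evop}, $Q$ is strongly continuously differentiable in $B(W,H)$, so $(t,r)\mapsto Q(t,r)g(r)$ is $C^1$ into $H$. Lemma \ref{diffint} combined with the derivative formulas for $Q$ then gives
$$\frac{d}{dt}F_s(g,x)(t)=iA(t)Q(t,s)x+Q(t,t)g(t)+\int_s^t iA(t)Q(t,r)g(r)dr=iA(t)F_s(g,x)(t)+g(t),$$
which is precisely $D_0F_s(g,x)=g$ in $C^1([0,T],W)$. Condition 1 of Theorem \ref{evop} gives $F_s(g,x)(s)=Q(s,s)x=x$.

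For the second stage, I would note that pairs $(g,x)$ with $g\in S([0,T],W)$ and $x\in W$ are dense in $\LH\oplus H$: Lemma \ref{Sdense} together with density of $W$ in $H$ handles the first factor, while density of $W$ handles the second. Because each $Q(t,r)$ is an $H$-isometry, the elementary estimate
$$\norm{F_s(g,x)}_{L^2}\leq\sqrt{T}\norm{x}+T\norm{g}_{L^2}$$
shows $F_s$ is a bounded map $\LH\oplus H\to\LH$. Combined with $\norm{DF_s(g,x)}_{L^2}=\norm{g}_{L^2}$ on the dense class, this implies that for any sequence $(g_n,x_n)\to(g,x)$ of nice pairs, $F_s(g_n,x_n)$ is Cauchy in the graph norm of $D$. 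Since $D$ is closed, its limit $F_s(g,x)$ lies in $\Dom(D)$ with $DF_s(g,x)=g$, and $F_s(g,x)(s)=x$ persists in the limit through the continuous inclusion $\Dom(D)\hookrightarrow C^0([0,T],H)$ from Lemma \ref{cont}. Thus $F_s$ is a right inverse to $D\oplus ev_s$.

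The main obstacle is the third stage, injectivity of $D\oplus ev_s$. Here I would use the identity
$$\norm{f(t)}^2-\norm{f(s)}^2=2\,\mathrm{Re}\langle\chi_{[s,t]}f,Df\rangle_{L^2},$$
which is established for $f\in C^1([0,T],W)$ by exactly the calculation already carried out in the proof of Lemma \ref{cont} (self-adjointness of each $A(r)$ reduces $\partial_r\norm{f(r)}^2$ to $2\,\mathrm{Re}\langle f(r),Df(r)\rangle$, and one integrates). Both sides are continuous in the graph norm of $D$ (the left side by Lemma \ref{cont}), so the identity extends to all $f\in\Dom(D)$. If $Df=0$ and $f(s)=0$, the identity forces $\norm{f(t)}=0$ for every $t\in[0,T]$, hence $f=0$. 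Together with the right-inverse property this shows $F_s$ is the two-sided inverse of $D\oplus ev_s$, so $D$ has the Cauchy property.
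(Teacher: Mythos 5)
Your proof is correct, and while its first two stages follow the same route as the paper -- the computation of $\ddt F_s(g,x)(t)=iA(t)F_s(g,x)(t)+g(t)$ via Lemma \ref{diffint} on a dense class of nice data, followed by the graph-norm estimate and closedness of $D$ to extend the right-inverse identity to all of $\LH\oplus H$ -- the third stage is genuinely different. The paper establishes the left-inverse identity directly, computing $F_s(Df,f(s))(t)=f(t)$ for $f\in C^1([0,T],W)$ by the product rule applied to $r\mapsto Q(t,r)f(r)$, and then extends it to $\Dom(D)$ using that $C^1([0,T],W)$ is a core by the very definition of $D$ and that both $F_s$ and $D\oplus ev_s$ are bounded. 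You instead prove injectivity of $D\oplus ev_s$ from the energy identity
$$\norm{f(t)}^2-\norm{f(s)}^2=2\,\mathrm{Re}\<\chi_{[s,t]}f,Df\>_{L^2},$$
extended from the core by graph-norm continuity (this is exactly the computation already used to prove Lemma \ref{cont}), and combine uniqueness with the surjectivity supplied by the right inverse. Both arguments are sound: the paper's route yields the explicit operator identity $F_s\circ(D\oplus ev_s)=id$ at the cost of a second evolution-operator computation, while yours recycles the estimate behind Lemma \ref{cont}, gives uniqueness of solutions of the initial value problem as a standalone statement, and only needs the small extra remark that an injective surjective map with right inverse $F_s$ has $F_s$ as its two-sided (and, by your graph-norm bound, bounded) inverse. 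A minor point in your favour: taking $x\in W$ in the first stage, so that the derivative formula for $Q(t,s)x$ is literally applicable and $F_s(g,x)$ lies in $C^1([0,T],W)$, is cleaner than the paper's formulation of that step for general $x\in H$, and the density of $W$ in $H$ is absorbed into your closure argument anyway.
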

\begin{proof}
For $g\in C^1([0,T],W)$ and $x\in H$, we get (using Lemma \ref{diffint} for differentiating the integral) 
\begin{align*}
\ddt F_s(g,x)(t)&=\pdt Q(t,s)x+\pdt\int\limits_s^tQ(t,r)g(r)dr\\
&=iA(t)Q(t,s)x+\int\limits_s^t \pdt Q(t,r)g(r)dr+Q(t,t)g(t)\\
&=iA(t)Q(t,s)x+iA(t)\int\limits_s^tQ(t,r)g(r)dr+g(t)\\
&=iA(t) F_s(g,x)(t)+g(t),
\end{align*}
where $A$ can be pulled out of the integral, as the integral also exists in $W$.
Thus $DF_s(g,x)=g$. As
$$F_s(g,x)(s)=Q(s,s)x=x,$$
we have
$$(D\oplus ev_s)\circ F_s|_{C^1([0,T],W)}=id.$$
Conversely, for $f\in C^1([0,T],W)$, we have
\begin{align*}
F_s(Df,f(s))(t)&=Q(t,s)f(s)+\int\limits_s^t Q(t,r)\left(\frac{d}{dr}f(r)-iAf(r)\right)dr\\
&=Q(t,s)f(s)+\int\limits_s^t \left(Q(t,r)\frac{d}{dr}f(r)+\left(\pd{r}Q(t,r)\right)f(r)\right)dr\\
&=Q(t,s)f(s)+\int\limits_s^t \pd{r}\left(Q(t,r)f(r)\right)dr\\
&=Q(t,s)f(s)+Q(t,t)f(t)-Q(t,s)f(s)\\
&=f(t),
\end{align*}
whence
$$F_s\circ(D\oplus ev_s)|_{C^1([0,T],W)}=id.$$
Now it remains to extend this to the whole spaces. If $x_n$ is a sequence in $W$ converging to a limit $x$ in $H$ and $\phi$ is in $C^\infty([0,T],\C)$, $\phi x_n$ converges to $\phi x$ in $\LH$. Thus $S([0,T],W)$ (as defined in Lemma \ref{Sdense}) is dense in $S([0,T],H)$, which is dense in $\LH$ by Lemma \ref{Sdense}. We can conclude that $S([0,T],W)$ is dense in $\LH$. As $C^1([0,T],W)$ contains $S([0,T],W)$, it is also dense in $\LH$. Moreover, $C^1([0,T],W)$ is dense in $\Dom(D)$ by definition.
Using that $Q$ is an isometry, we obtain for $(f,x)\in C^1([0,T],W)\oplus H$:
\begin{align*}
||F_s(f,x)||_{D}^2&=||F(f,x)||_{L^2}^2+||D F(f,x)||_{L^2}^2\\
&\leq\norm{Q(\cdot,s)x}_{L^2}^2+\norm{\int\limits_s^\cdot Q(\cdot,r)f(r)dr}_{L^2}^2+\norm{f}_{L^2}^2\\
&\leq\norm{x}_{L^2}^2+\norm{\int\limits_t^\cdot\norm{f(r)}dr}_{L^2}^2+\norm{f}_{L^2}^2\\
&\leq T||x||^2+\norm{\int\limits_0^T\norm{f(r)}dr}_{L^2}^2+\norm{f}_{L^2}^2\\
&\leq T||x||^2+T||f||_{L_1}^2+||f||_{L_2}^2\\
&\leq T||x||^2+CT||f||_{L_2}^2+||f||_{L_2}^2\\
&\leq C'||(f,x)||_{\LH\oplus H}^2,
\end{align*}
for some constants $C$ and $C'$ (as $\LH$ is boundedly included in $L^1([0,T],H)$).
Thus $F_s|_{C^1([0,T],W)\oplus H}$ extends uniquely to a bounded map $\LH\oplus H\rightarrow \Dom(D)$. As both this extension and $F_s$ are bounded as maps into $\LH$, they coincide, so $F_s$ maps boundedly into $\Dom(D)$. $D\oplus ev_s$ is bounded by definition of the operator norm and by Lemma \ref{cont}. 

Thus $F_s(D\oplus ev_t)$ and $(D\oplus ev_t)F_s$ are bounded maps that are the identity on a dense subset, so they must be the respective identities.
\end{proof}

\section{The Index and the Spectral Flow}
\label{s5}
In the following, we will use the splitting of $H$ in positive and negative spectral subspaces of $A(t)$:
\begin{df}
For $t\in[0,T]$, let 
$$P_{\geq 0}(t):=P_{[0,\infty)}(t)=I-P_{<0}(t).$$
Let $$H_{<0}(t):=\Ran(P_{<0}(t))$$ and
$$H_{\geq 0}(t):=\Ran(P_{\geq 0}(t))$$
\end{df}
We define APS boundary conditions:
\begin{df}
For $s<t\in[0,T]$, let $(D|_{[s,t]})_{\APS}$ be the restriction of $D|_{[s,t]}$ to $$\Dom((D|_{[s,t]})_{\APS}):=\{f\in \Dom(D)|f(s)\in H_{<0}(s),f(t)\in H_{\geq 0}(t)\}.$$
Let 
$$D_{\APS}:=(D|_{[0,T]})_{\APS}$$
\end{df}
The following evolved spectral projection will be crucial for proving the main theorem.
\begin{df}
Let 
$$\hat P_{<a}(t):=Q(0,t)P_{<a}(t)Q(t,0).$$
Let $\hat P_{<a}(t)_r$ be the restriction 
$$\hat P_{<a}(t)_r:=[\hat P_{<a}(t):{H_{<0}(0)}\rightarrow{Q(0,t)H_{(-\infty,a)}(t)}].$$
\end{df}
Note that $\hat P_{<a}(t)$ is the projection onto $Q(0,t)H_{(-\infty,a)}(t)$. Thus $\hat P_{<a}(t)_r$ is defined so that, by definition of the index of pairs,  its index is that of the pair $(P_{<0}(0),\hat P_{<a}(t))$. Also note that $\hat P_{<0}(0)=P_{<0}(0)$.

In this section, the main theorem will be shown, stating that under a certain condition we have 
$$\ind(D_{\APS})=\sfl(A).$$
For this purpose, we will first show that the index of $D_{\APS}$ and that of the pair $(\hat P_{<0}(0),\hat P_{<0}(T))$ coincide (if either one is defined). Then we will show that those projections are spectral projections of a strongly differentiable family $\hat A$ of operators that has the same spectral flow as $A$. We can then use Theorem \ref{flowind} for $\hat A$ to conclude the proof of the equality above.

\subsection{The Index of $D_{\APS}$}
The first step towards the index formula is to investigate the Fredholm index of $D_{\APS}$. This corresponds to section 3 in [B\"aSt].
\begin{thm}
\label{inD}
Write $Q$ for $Q(T,0)$.
The kernel of $D_{\APS}$ is:
$$\Ker(D_{\APS})\cong H_{<0}(0)\cap Q^{-1}H_{\geq 0}(T)=\Ker(\hat P_{<0}(T)_r)$$
$D_{\APS}$ has closed range if and only if $P_{<0}(t)_r$ has closed range and in this case
$$\Coker(D_{\APS})\cong H_{\geq 0}(0)\cap Q^{-1}H_{<0}(T)\cong \Coker(\hat P_{<0}(T)_r).$$
In particular, $D_{\APS}$ is Fredholm with index $k$ if and only if  $(P_{<0}(0),\hat P_{<0}(T))$ is a Fredholm pair with index $k$.
\end{thm}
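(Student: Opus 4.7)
The plan is to use the Cauchy property to parametrise $\Dom(D)$ via $F_0$: a function $f\in\Dom(D)$ corresponds to $(g,x)=(Df,f(0))\in \LH\oplus H$, so $f\in\Dom(D_{\APS})$ iff $x\in H_{<0}(0)$ and $P_{<0}(T)(Qx+y(g))=0$, where $y(g):=\int_0^T Q(T,r)g(r)\,dr$. Taking $g=0$ forces $f=Q(\cdot,0)x$ and the APS conditions reduce to $x\in H_{<0}(0)$, $Qx\in H_{\geq 0}(T)$, so evaluation at $0$ gives $\Ker(D_{\APS})\cong H_{<0}(0)\cap Q^{-1}H_{\geq 0}(T)$. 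Since $Q(0,T)$ is unitary, $\hat P_{<0}(T)x=Q(0,T)P_{<0}(T)Qx$ vanishes on $x\in H_{<0}(0)$ iff $P_{<0}(T)Qx=0$ iff $Qx\in H_{\geq 0}(T)$, so this same set is $\Ker(\hat P_{<0}(T)_r)$.

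For the range, I introduce the bounded maps $\sigma:=P_{<0}(T)Q|_{H_{<0}(0)}\colon H_{<0}(0)\to H_{<0}(T)$ and $\psi\colon\LH\to H_{<0}(T)$, $\psi(g):=P_{<0}(T)y(g)$. Composition with the unitary $Q(T,0)$ carries $\hat P_{<0}(T)_r$ onto $\sigma$ and so preserves kernel, cokernel, index and closed-rangeness. The parametrisation above yields $\Ran(D_{\APS})=\psi^{-1}(\Ran(\sigma))$. The map $\psi$ is surjective: for $z\in H_{<0}(T)$ the curve $g(r):=T^{-1}Q(r,T)z$ lies in $\LH$ (by continuity of $Q(\cdot,T)z$) and satisfies $y(g)=z$, hence $\psi(g)=z$. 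Also $\Ker(\psi)\subseteq\Ran(D_{\APS})$ (take $x=0$). By the open mapping theorem, $\psi$ descends to a topological isomorphism $\LH/\Ker(\psi)\to H_{<0}(T)$; restricting it further gives an isomorphism $\LH/\Ran(D_{\APS})\cong H_{<0}(T)/\Ran(\sigma)=\Coker(\sigma)$, and shows that $\Ran(D_{\APS})$ is closed iff $\Ran(\sigma)$ is, iff $\hat P_{<0}(T)_r$ has closed range.

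When $\Ran(\sigma)$ is closed, $\Coker(\sigma)\cong\Ran(\sigma)^\perp$ inside $H_{<0}(T)$. Self-adjointness of $P_{<0}(T)$ together with $Q^*=Q(0,T)$ gives $\<z,\sigma x\>=\<Q(0,T)z,x\>$ for $x\in H_{<0}(0)$ and $z\in H_{<0}(T)$; hence $z\in\Ran(\sigma)^\perp\cap H_{<0}(T)$ iff $Q(0,T)z\in H_{\geq 0}(0)$, while $z\in H_{<0}(T)$ itself says $Q(0,T)z\in Q^{-1}H_{<0}(T)$. Thus $Q(0,T)$ maps $\Ran(\sigma)^\perp$ isomorphically onto $H_{\geq 0}(0)\cap Q^{-1}H_{<0}(T)$, yielding the cokernel identification. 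Combined with the kernel description and the closed-range equivalence, this gives the full Fredholm statement with equal indices.

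The only step I expect to demand real care is the closed-range equivalence, where one must invoke the open mapping theorem to argue that, through the quotient map $\LH/\Ker(\psi)\cong H_{<0}(T)$, a subspace containing $\Ker(\psi)$ is closed in $\LH$ iff its image in $H_{<0}(T)$ is closed. Everything else is algebraic bookkeeping with the Cauchy parametrisation and the unitarity of $Q$.
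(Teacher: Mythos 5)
Your proposal is correct and follows essentially the same route as the paper: parametrise $\Dom(D)$ via the Cauchy property, reduce the boundary-value problem to the operator $P_{<0}(T)Q|_{H_{<0}(0)}$ (the paper's $\hat P_{<0}(T)_r$ up to the unitary $Q(0,T)$), and identify the cokernel with $H_{\geq 0}(0)\cap Q^{-1}H_{<0}(T)$ via an orthogonal-complement computation. The only cosmetic difference is that you pass to the quotient $\LH/\Ker(\psi)$ with the open mapping theorem where the paper works with the subspace $V=QH_{<0}(0)+H_{\geq 0}(T)$, the surjectivity of $E$ and the splitting $\iota(V)\oplus\Ker(E)$; both handle the closed-range equivalence correctly.
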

\head{Remark:} By replacing $A$ by $A|_{[0,t]}$, we obtain for any $t\in[0,T]$ that $(D|_{[0,t]})_{\APS}$ is Fredholm with index $k$ if and only if $\hat P_{<0}(t)_r$ is.
\begin{proof}
We have
\begin{align*}
\Ker(D_{\APS})&=\{f\in \Dom(D)| Df=0,f(0)\in H_{<0}(0), f(T)\in H_{\geq 0}(T)\}\\
&\cong\{f(0)\in H_{<0}(0)|Qf(0)\in H_{\geq 0}(T)\}\\
&=H_{<0}(0)\cap Q^{-1}H_{\geq 0}(T)\\
&=\Ker(\hat P_{<0}(t)_r),
\end{align*}
where in the second line, we use that $f\mapsto f(0)$ is an isomorphism due to the Cauchy property and that $Df=0$ implies $f(t)=Q(t,0)f(0)$. In the last line, we used that $Q^{-1}H_{\geq 0}(T)$ is the orthogonal complement of $Q^{-1}H_{<0}(T)$, onto which $\hat P_{<0}(t)$ projects.

For $g\in \LH$ define
$$E(g):=ev_T(D\oplus ev_0)^{-1}(g,0).$$
This is in some sense the counterpart to $Q$: Rather than starting with $f(0)=x$ for some $x\in H$ and evolving subject to $Df=0$, one starts with $f(0)=0$ and evolves subject to $Df=g$. By linearity, we have
$$ev_T(D\oplus ev_0)^{-1}(g,x)=E(g)+Qx$$

To see when $D_{\APS}$ has closed range and to determine its cokernel, we need to characterize its range. 

For $g\in \LH$, we have the following chain of equivalences:
\begin{align*}
g\in \Ran(D_{\APS})&\Leftrightarrow \exists f\in \Dom(D): f(0)\in H_{<0}(0)\wedge f(T) \in H_{\geq 0}(T) \wedge Df=g\\
&\Leftrightarrow \exists f(0)\in H_{<0}(0): ev_T(D\oplus ev_0)^{-1}(g,f(0))\in H_{\geq 0}(T)\\
&\Leftrightarrow\exists f(0)\in H_{<0}(0):\exists z\in H_{\geq 0}(T): E(g)+Q(f(0))=z\\
&\Leftrightarrow\exists x\in QH_{<0}(0):\exists z\in H_{\geq 0}(T): E(g)=z-x\\
&\Leftrightarrow E(g)\in QH_{<0}(0)+ H_{\geq 0}(T)\\
\end{align*}
Defining 
$$V:=QH_{<0}(0)+ H_{\geq 0}(T),$$
we get
$$\Ran(D_{\APS})=\{g\in\LH|E(g)\in V\}=E^{-1}(V).$$

\head{Claim}: $E:\LH \rightarrow H$ is surjective.\\
We need to show that functions in $\Dom(D)$ that vanish at 0 can take any value at $T$.  For $z\in H$ choose $f\in \Dom(D)$ with $f(T)=z$ (a possible choice is  $f(t)=Q(t,T)z$) and let
$\phi(t):=\frac{t}{T}$. Multiplication with $\phi$ preserves $\Dom(D)$ as it is bounded on $\Dom(D_0)$ with respect to $||\cdot||_D$. Thus, $\phi f$ is in $\Dom(D)$, with $\phi(0)f(0)=0$ and $\phi(T)f(T)=z$. We get 
$$E(D(\phi f))=ev_T\circ(D\oplus ev_0)^{-1}(D(\phi f),0)=ev_T(\phi f)=z.$$
As $z$ was arbitrary, $E$ is surjective.

As $E|_{\Ker(E)^\bot}$ is an isomorphism, we can define 
$$\iota:=(E|_{\Ker(E)^\bot})^{-1}.$$
By the bounded inverse theorem, $\iota$ is continuous.
We can conclude that
$$\Ran(D_{\APS})=E^{-1}(V)=\iota(V)\oplus \Ker(E)$$
is closed if and only if $V$ is closed.

Next we relate closedness of $V$ to that of $\Ran(\hat P_{<0}(T)_r)$.

For $x\in QH_{<0}(0)$ and $y\in H_{\geq 0}(T)$, we have
$$x+y=P_{<0}(T)x+(P_{\geq 0}(T)x+y)\in P_{<0}(T)QH_{<0}(0)+H_{\geq 0}(T)$$
and conversely
$$P_{<0}(T)x+y=x+(y-P_{\geq 0}(T)x)\in V.$$
Thus 
$$V=P_{<0}(T)QH_{<0}(0)+H_{\geq 0}(T).$$
As $H_{\geq 0}(T)$ is closed and the summands are orthogonal, $V$ is closed if and only if $P_{<0}(T)QH_{<0}(0)$ is closed. This is closed if and only if
$$Q^{-1}P_{<0}(T)QH_{<0}(0)=\Ran(\hat P_{<0}(T)_r)$$
is closed. Putting things together, we get that $D_{\APS}$ has closed range if and only if $\hat P_{<0}(T)_r$ has closed range.

Now assume that $\Ran(D_{\APS})$ and hence $V$ is closed.
Let $P$ be the orthogonal projection onto $V^\bot=QH_{\geq 0}(0)\cap H_{<0}(T)$.
$$P\circ E: \LH\rightarrow QH_{\geq 0}(0)\cap H_{<0}(T)$$
is a surjective map with kernel
$$\Ker(P\circ E)=E^{-1}(\Ker(P))=E^{-1}(V)=\Ran(D_{\APS}).$$ 
This means
$$QH_{\geq 0}(0)\cap H_{<0}(T)=\Ran(P\circ E)\cong \Dom(P\circ E)/\Ker(P\circ E)=\LH/\Ran(D_{\APS}).$$
We can conclude:
$$\Coker(D_{\APS})\cong QH_{\geq 0}(0)\cap H_{<0}(T)\cong H_{\geq 0}(0)\cap Q^{-1}H_{<0}(T).$$
Moreover, for every $x\in Q^{-1}H_{<0}(T)$, $x$ is in $H_{\geq 0}(0)=H_{<0}(0)^\bot$ if and only if for all $y\in H_{<0}(0)$
$$0=\<x,y\>=\<\hat P_{<0}(T)x,y\>=\<x,\hat P_{<0}(T)y\>.$$
Thus we can conclude
\begin{align*}
&H_{\geq 0}(0)\cap Q^{-1}H_{<0}(T)\\
&=\left(\hat P_{<0}(T)H_{<0}(0)\right)^\bot\cap Q^{-1}H_{<0}(T)\\
&=\Ran(\hat P_{<0}(T)_r)^\bot\ \ \ \ \ (in\ Q^{-1}H_{<0}(0))\\
&\cong \Coker(\hat P_{<0}(T)_r).
\end{align*}
\end{proof}
In [B\"aSt], the operator $Q_{--}$ defined below is used instead of the pair $(P_{<0}(0),\hat P_{<0}(t))$.  When considering indices, the two are equivalent: 
\begin{thm}
For every $t\in[0,T]$, the operator 
$$Q_{--}(t,0):=[P_{<0}(t)Q(t,0):{H_{<0}(0)}\rightarrow{H_{<0}(t)}]$$
is Fredholm with index k if and only if $(P_{<0}(0),\hat P_{<0}(t))$ is a Fredholm pair with index k.
\end{thm}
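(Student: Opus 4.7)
The plan is to factor $\hat P_{<0}(t)_r$ through $Q_{--}(t,0)$ via a unitary isomorphism, so that Fredholmness and the index are preserved, and then invoke the definition of the Fredholm index of a pair.

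First, I would observe that $Q(t,0)$ is a unitary operator on $H$: it is an isometry by Theorem \ref{evop}, and from conditions 1 and 2 of that theorem we get $Q(t,0)Q(0,t)=Q(t,t)=I$ and $Q(0,t)Q(t,0)=Q(0,0)=I$, so $Q(0,t)$ is a two-sided inverse. Consequently $Q(0,t)$ restricts to a unitary isomorphism
$$U_t:=[Q(0,t):H_{<0}(t)\to Q(0,t)H_{<0}(t)]$$
onto the range of $\hat P_{<0}(t)$ (which equals $Q(0,t)H_{<0}(t)$ since $Q(t,0)$ is surjective).

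Next, I would rewrite the defining formula $\hat P_{<0}(t)=Q(0,t)P_{<0}(t)Q(t,0)$ with the domain restricted to $H_{<0}(0)$ and the codomain restricted to $Q(0,t)H_{<0}(t)$, yielding the factorization
$$\hat P_{<0}(t)_r=U_t\circ Q_{--}(t,0).$$
Since $U_t$ is an isomorphism (in particular Fredholm with index $0$), multiplicativity of the Fredholm index shows that $\hat P_{<0}(t)_r$ is Fredholm with index $k$ if and only if $Q_{--}(t,0)$ is.

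Finally, by the definition of the Fredholm index of a pair in Section \ref{sympair}, $(P_{<0}(0),\hat P_{<0}(t))$ is a Fredholm pair with index $k$ precisely when $\hat P_{<0}(t)_r$ is Fredholm with index $k$. Chaining the two equivalences proves the claim. There is no substantial obstacle here; the proof is essentially a bookkeeping exercise, and the only point requiring a moment's care is to verify that the codomain $Q(0,t)H_{<0}(t)$ of $\hat P_{<0}(t)_r$ matches the image of $H_{<0}(t)$ under $U_t$, which is immediate.
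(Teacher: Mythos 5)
Your proposal is correct and follows essentially the same route as the paper: both factor $\hat P_{<0}(t)_r$ as the isomorphism $[Q(0,t):H_{<0}(t)\rightarrow Q(0,t)H_{<0}(t)]$ composed with $Q_{--}(t,0)$ and then use invariance of Fredholmness and the index under composition with an isomorphism, together with the definition of the index of a pair. No gaps to report.
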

\begin{proof}
$\hat P_{<0}(t)$ is the projection onto $Q(0,t)H_{<0}(t)$.
We have
\begin{align*}
\hat P_{<0}(t)_r&=[(Q(0,t)P_{<0}(t)Q(t,0)):{H_{<0}(0)}\rightarrow{Q(0,t)H_{<0}(t)}]\\
&=[Q(0,t):{H_{<0}(t)}\rightarrow{Q(0,t)H_{<0}(t)}]\circ Q_{--}(t,0)
\end{align*}
As $[Q(0,t):{H_{<0}(t)}\rightarrow{Q(0,t)H_{<0}(t)}]$ is an isomorphism,
$\hat P_{<0}(t)_r$
is Fredholm with index k if and only if $Q_{--}(t,0)$ is.
\end{proof}

In some special cases , this can be immediately used to show $\ind(D_{\APS})=\sfl(A)$:

\begin{thm}
If $A$ is uniformly bounded from below and $\spec(A)$ is  discrete with only finite multiplicities, $D_{\APS}$ is Fredholm and
$$\ind(D_{\APS})=\sfl(A)$$
\end{thm}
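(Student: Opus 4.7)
The plan is to reduce the statement to a finite-dimensional computation by exploiting the spectral hypotheses and then feed this into the previously established Theorems \ref{inD} and \ref{flowind}. First I would observe that if $A(t) \geq -C$ uniformly in $t$ and each spectrum $\spec(A(t))$ is discrete with finite multiplicities, then $\spec(A(t)) \cap (-\infty, 0) \subseteq [-C, 0)$ consists of only finitely many eigenvalues, each of finite multiplicity. Consequently $H_{<0}(t)$ is finite-dimensional for every $t \in [0,T]$. Since $Q(0,t)$ is an isometry, $Q(0,t) H_{<0}(t)$ is likewise finite-dimensional of the same dimension.

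Next I would apply Theorem \ref{inD}. The restriction $\hat P_{<0}(T)_r : H_{<0}(0) \to Q(0,T) H_{<0}(T)$ is now a bounded operator between finite-dimensional Hilbert spaces, hence automatically Fredholm with index
$$\ind(\hat P_{<0}(T)_r) = \Dim H_{<0}(0) - \Dim H_{<0}(T).$$
By Theorem \ref{inD}, $D_{\APS}$ is therefore Fredholm with the same index.

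Finally I would compute $\sfl(A)$ via Theorem \ref{flowind}. Each $P_{<0}(t)$ is a finite-rank projection, so $[P_{<0}(t) : H_{<0}(0) \to H_{<0}(t)]$ is trivially Fredholm for every $t$, verifying the hypothesis of Theorem \ref{flowind}. Applying that theorem gives
$$\sfl(A) = \ind(P_{<0}(0), P_{<0}(T)) = \Dim H_{<0}(0) - \Dim H_{<0}(T),$$
where the second equality again uses that both ranges are finite-dimensional. Comparing with the index computed in the previous paragraph yields $\ind(D_{\APS}) = \sfl(A)$.

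There is no real obstacle here: the argument is essentially a bookkeeping exercise reducing both sides of the desired equality to $\Dim H_{<0}(0) - \Dim H_{<0}(T)$. The only point requiring a moment's attention is that the indices of the pairs $(P_{<0}(0), P_{<0}(T))$ and $(P_{<0}(0), \hat P_{<0}(T))$ coincide, which is immediate from isometry-invariance of dimension once one knows both spectral subspaces are finite-dimensional.
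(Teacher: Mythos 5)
Your argument is correct, and its first half coincides with the paper's: finite-dimensionality of $H_{<0}(t)$ makes $\hat P_{<0}(T)_r$ an operator between finite-dimensional spaces, hence Fredholm with index $\Dim H_{<0}(0)-\Dim H_{<0}(T)$, and Theorem \ref{inD} transfers this to $D_{\APS}$. Where you diverge is in identifying this number with $\sfl(A)$: you verify the hypothesis of Theorem \ref{flowind} (trivially, since all the pairs $(P_{<0}(0),P_{<0}(t))$ involve finite-rank projections) and then read off $\sfl(A)=\ind(P_{<0}(0),P_{<0}(T))=\Dim H_{<0}(0)-\Dim H_{<0}(T)$, whereas the paper instead applies Lemma \ref{minflow} with the trivial one-interval partition, which is admissible because a number $a$ strictly below the uniform lower bound satisfies $-a\notin\spec(A(t))$ for all $t$ and gives $H_{<0}(t)=H_{[-a,0)}(t)$, so that $\sfl(A)=\Dim(H_{[-a,0)}(0))-\Dim(H_{[-a,0)}(T))$ directly. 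The two routes buy slightly different things: the paper's use of Lemma \ref{minflow} is more elementary, needing only the sign-flip description of the spectral flow and no continuity-of-index argument, while your route reuses the Fredholm-pair machinery of Theorem \ref{flowind} (whose proof rests on Theorems \ref{kat} and \ref{contfred}) and thus keeps the whole proof in the same language as the main theorem; your closing remark that the indices of $(P_{<0}(0),P_{<0}(T))$ and $(P_{<0}(0),\hat P_{<0}(T))$ agree by isometry-invariance of dimension is exactly the right observation to close the loop.
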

\head{Remark:} This covers the special case that $H$ is finite dimensional.
\begin{proof}
Let $a$ be a lower bound for all $A(t)$. Then 
$H_{<0}(t)=H_{[-a,0)}(t)$
is finite dimensional all $t$. In particular,  $\hat P_{<a}(T)_r$ is an operator between finite dimensional spaces, so it is Fredholm, with its index being the difference of the dimensions of domain and range. Thus we have 
\begin{align*}
\ind(\hat P_{<a}(T)_r)&=\Dim(H_{[-a,0)}(0))-\Dim(Q(0,T)H_{[-a,0)}(T))\\
&=\Dim(H_{[-a,0)}(0))-\Dim(H_{[-a,0)}(T))\\
&=\sfl(A),
\end{align*}
by Lemma \ref{minflow}.
By Theorem \ref{inD}, this implies the claim.
\end{proof}

\head{Remark:} Note that the specific form of $D$ was not used anywhere in this section yet. The proof works for any operator $D$ that has the Cauchy property and such that $ev_0\oplus ev_T:\Dom (D)\rightarrow H\oplus H$ is surjective, with $Q$ being the associated evolution operator. The dependence of the index on $\sfl(A)$ can thus be seen to come mainly (in the special case above, only) from the APS boundary conditions with respect to $A$, while the appearance of $A$ in the definition of $D$ is only relevant for the evolution operator, which has to be "nice enough" to not destroy the equality. In particular, we could look at $D=\overline {\ddt-iA_1}$ and take APS boundary conditions with respect to the spectral decomposition of $A_2$, where $A_1$ and $A_2$ satisfy the conditions for $A$. Everything we have done so far would work, with the evolution operator being defined with respect to $A_1$ and all spectral projections (and the spectral flow) with respect to $A_2$. In fact, for the conclusion of the proof of the main theorem, we would only need that $A_1(t)$ and $A_2(t)$ commute. As a special case of this, we could consider $A_1=A$ and $A_2=-A$. This would give the same operator $D$, but with "anti-APS" boundary conditions, i.e. $f(0)\in H_{(0,\infty)}(0)$ and $f(T)\in H_{(-\infty,0]}(T)$. We would then get the equality $$\ind(D_{\operatorname{anti-APS}})=\ind(P_{>0}(0), \hat{P}_{>0}(t))=sf(-A).$$

\subsection {The Main Theorem}
Let 
$$\hat A(t):=Q(0,t)A(t)Q(t,0).$$
As functional calculus is equivariant under conjugation with isometries,
we have
$$\chi_{(-\infty,a)}(\hat A(t))=Q(0,t)\chi_{(-\infty,a)}(A(t))Q(t,0)=\hat P_{<a}(t).$$
For every $t\in[0,T]$, $\hat A(t)$ is self-adjoint and Fredholm, with domain $W$ (as $Q(t,0)^{-1}(W)=W$). In order to show that $\hat A$ has all the properties required of $A$, the only thing we need to prove is that $\hat A$ is strongly differentiable in $B(W,H)$.
\begin{lemma}
\label{hatdiff}
$\hat A: J\rightarrow B(W,H)$ is strongly continuously differentiable with derivative
$$\hat A'(t) = Q(0,t)A'(t)Q(t,0)$$
\end{lemma}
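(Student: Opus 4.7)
The natural move would be to apply the product rule (Lemma \ref{prodiff}) to the triple product $Q(0,t)A(t)Q(t,0)$; formally, Leibniz together with property 5 of Theorem \ref{evop} would yield three terms, namely $-Q(0,t)iA(t)\cdot A(t)Q(t,0)$, $Q(0,t)A'(t)Q(t,0)$, and $Q(0,t)A(t)\cdot iA(t)Q(t,0)$, of which the outer two cancel and leave exactly $Q(0,t)A'(t)Q(t,0)$. The main obstacle is that this formal computation requires $A(t)\circ A(t)$ to be a well-defined operator $W\to H$, i.e.\ $A(t)W\subseteq W$, which is not part of our standing hypotheses. In particular, the subproducts $Q(0,t)A(t)$ and $A(t)Q(t,0)$ are not separately differentiable in $B(W,H)$, so the cancellation has to be exhibited by pairing against vectors rather than carried out factor-by-factor.

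Fix $x,y\in W$ and consider the scalar function
\[
f(t):=\langle y,\hat A(t)x\rangle = \langle Q(t,0)y,\, A(t)Q(t,0)x\rangle,
\]
using unitarity of $Q(t,0)$. I plan to show that $f$ is differentiable with $f'(t)=\langle y,Q(0,t)A'(t)Q(t,0)x\rangle$ by expanding $h^{-1}[f(t+h)-f(t)]$ into three pieces that record the change in each of the three factors separately. For the piece carrying the change in the left factor, property 5 of Theorem \ref{evop} gives $h^{-1}[Q(t+h,0)y-Q(t,0)y]\to iA(t)Q(t,0)y$ in $H$, while $A(t+h)Q(t+h,0)x\to A(t)Q(t,0)x$ in $H$ by strong continuity of the factors, so the limit is $\langle iA(t)Q(t,0)y, A(t)Q(t,0)x\rangle$. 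For the piece carrying the change in $A$, strong continuous differentiability of $A$ in $B(W,H)$ together with continuity of $Q(\cdot,0)x$ in $W$ gives $\langle Q(t,0)y, A'(t)Q(t,0)x\rangle$. For the piece carrying the change in the right factor I first apply self-adjointness of $A(t)$, which is legitimate since both $Q(t,0)y$ and $[Q(t+h,0)-Q(t,0)]x$ lie in $W$, and then use property 5 of Theorem \ref{evop} again to get the limit $\langle A(t)Q(t,0)y, iA(t)Q(t,0)x\rangle$. The first and third limits contribute $-i$ and $+i$ times the same complex number $\langle A(t)Q(t,0)y, A(t)Q(t,0)x\rangle$, so they cancel, leaving precisely the asserted value of $f'(t)$.

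Having established pointwise differentiability of $f$, I integrate via the fundamental theorem of calculus for scalar functions and pull the integral through the bounded functional $\langle y,\cdot\rangle$ to obtain
\[
\langle y,\ \hat A(t)x - \hat A(0)x - \int_0^t Q(0,s)A'(s)Q(s,0)x\,ds\rangle = 0
\]
for every $y\in W$. Density of $W$ in $H$ promotes this to the strong Bochner-integral identity $\hat A(t)x - \hat A(0)x = \int_0^t Q(0,s)A'(s)Q(s,0)x\,ds$. The integrand is strongly continuous in $H$, and in fact the family $Q(0,\cdot)A'(\cdot)Q(\cdot,0)$ is strongly continuous as a family in $B(W,H)$, because compositions of strongly continuous families of bounded operators are again strongly continuous. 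The fundamental theorem of calculus for Bochner integrals with continuous integrand then yields strong differentiability of $t\mapsto\hat A(t)x$ in $H$ with the claimed derivative, together with the required strong continuity of $\hat A'$ in $B(W,H)$.
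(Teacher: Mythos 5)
Your argument is correct, but it takes a genuinely different route from the paper. The paper sidesteps the ill-defined $A(t)^2$ terms by passing to the resolvent: it writes $(\hat A(t)-i)^{-1}=Q(0,t)R(t)Q(t,0)$ with $R(t)=(A(t)-i)^{-1}$, where the domains line up because $R(t)$ maps $H$ into $W$, differentiates this product via Lemmas \ref{prodiff}, \ref{strictdiff} and \ref{idiff}, and then recovers $\hat A'$ by differentiating the inverse again. You instead dualize: pairing against $y\in W$ and using unitarity of $Q(t,0)$ plus self-adjointness of $A(t)$, the two dangerous terms appear as $\mp i\left< A(t)Q(t,0)y,\ A(t)Q(t,0)x\right>$ and cancel at the scalar level, after which the fundamental theorem of calculus and density of $W$ upgrade the weak derivative to the strong Bochner-integral identity and hence to strong continuous differentiability in $B(W,H)$. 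Both approaches neutralize exactly the obstruction the paper's remark points out; yours is more elementary and self-contained (no resolvent, no Lemma \ref{strictdiff}), while the paper's stays at the operator level and reuses its general toolkit, which also yields the differentiability of $(\hat A(t)-i)^{-1}$ as a byproduct. Two small glosses in your write-up are worth making explicit: in the middle piece, the limit $h^{-1}(A(t+h)-A(t))Q(t+h,0)x\to A'(t)Q(t,0)x$ needs uniform boundedness of the difference quotients of $A$ in $B(W,H)$ (Banach--Steinhaus, exactly as in the proof of Lemma \ref{prodiff}), not just strong differentiability plus continuity of $Q(\cdot,0)x$ in $W$; and in the first piece, the convergence $A(t+h)Q(t+h,0)x\to A(t)Q(t,0)x$ uses norm continuity of $A$ in $B(W,H)$ (Lemma \ref{diffbound}) or the same boundedness device. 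Both are one-line repairs, so the proof stands.
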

\head{Remark:} An informal calculation ignoring domains would read
\begin{align*}
&\ddt(Q(0,t)A(t)Q(t,0))\\
&=-Q(0,t)iA(t)A(t)Q(t,0)+Q(0,t)A'(t)Q(t,0)+Q(0,t)A(t)iA(t)Q(t,0)\\
&=Q(0,t)A'(t)Q(t,0).
\end{align*}
However, $Q$ is not differentiable as a map $H\rightarrow H$ or $W\rightarrow W$ and the terms that cancel out contain second powers of $A$, which are not defined on $W$. One would like to restrict to a dense subspace where everything exists, but it is not clear if such a subspace exists. To remedy this, we will consider the inverse. This maps $H$ to $W$, allowing us to restrict the domain and extend the range in such a way that the calculations work rigorously.
\begin{proof}
Let $R(t):=(A(t)-i)^{-1}$ for $t\in [0,T]$. $\hat A(t)$ is differentiable at $t$ if and only if 
$$\hat A(t)-i=Q(0,t)(A(t)-i)Q(t,0)$$ 
is. As $Q(t,0)$ and $Q(0,t)$ are strongly continuously differentiable in $B(W,H)$ and $R(t)$ is strongly continuously differentiable in $B(H,W)$, we get from Lemmas \ref{prodiff} and \ref{idiff} that 
$$(\hat A(t)-i)^{-1}=Q(0,t)R(t)Q(t,0)$$
is strongly differentiable in $B(W,H)$. Its derivative is
\begin{align*}
&\ddt (\hat A(t)-i)^{-1} \\
&=\ddt Q(0,t)R(t)Q(t,0)\\
&=Q(0,t)R(t)iA(t)Q(t,0)-Q(0,t)R(t)A'(t)R(t)Q(t,0)-Q(0,t)iA(t)R(t)Q(t,0)\\
&=-Q(0,t)R(t)A'(t)R(t)Q(t,0).
\end{align*}
As this is strongly continuous in $B(H,W)$, Lemma \ref{strictdiff} implies that $(\hat A(t)-i)^{-1}$ is strongly continuously differentiable in $B(H,W)$. By Lemma \ref{idiff}, $\hat A(t)-i$ and hence $\hat A(t)$ are strongly continuously differentiable, with derivative
\begin{align*}
\hat A'(t)&=\ddt ((\hat A(t)-i)^{-1})^{-1}\\
&=-(\hat A(t)-i)\left(\ddt(\hat A(t)-i)^{-1}\right)(\hat A(t)-i)\\
&=Q(0,t)(A(t)-i)Q(t,0)\circ Q(0,t)R(t)A'(t)R(t)Q(t,0) \circ Q(0,t)(A(t)-i)Q(t,0)\\
&=Q(0,t)A'(t)Q(t,0)
\end{align*}
\end{proof}

With this, we have all the pieces in place now to prove the main theorem.
\begin{thm}
\head{(Main Theorem)}
\label{main}
If $(D|_{[0,t]})_{\APS}$ is Fredholm for all $t\in[0,T]$, we have
$$\ind(D_{\APS})=\sfl(A).$$
\end{thm}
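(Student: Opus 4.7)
The plan is to reduce the main theorem to Theorem \ref{flowind} applied to the conjugated family $\hat A$, using Theorem \ref{inD} to translate indices back and forth and Theorem \ref{conflow} to identify spectral flows. The key observation is that $\hat A(t)$ satisfies all the standing assumptions on $A$: it is self-adjoint and Fredholm with domain $W$ (since $Q(t,0)$ is a unitary of $H$ restricting to an isomorphism of $W$), it is strongly continuously differentiable in $B(W,H)$ by Lemma \ref{hatdiff}, and by equivariance of the functional calculus under unitary conjugation one has $\chi_{(-\infty,a)}(\hat A(t))=\hat P_{<a}(t)$, so that the objects $\hat P_{<0}(t)$ are exactly the negative spectral projections associated with $\hat A$. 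In particular, the spectral flow $\sfl(\hat A)$ is defined.

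The first step is to verify that Theorem \ref{flowind} applies to $\hat A$. By the remark following Theorem \ref{inD}, the hypothesis that $(D|_{[0,t]})_{\APS}$ is Fredholm for all $t\in[0,T]$ is equivalent to $\hat P_{<0}(t)_r$ being Fredholm for all $t$, which by definition is equivalent to $(P_{<0}(0),\hat P_{<0}(t))$ being a Fredholm pair for all $t$. Since $Q(0,0)=I$ gives $\hat P_{<0}(0)=P_{<0}(0)$, this is precisely the Fredholm-pair hypothesis of Theorem \ref{flowind} applied to the family $\hat A$. Hence
\[
\sfl(\hat A)=\ind(\hat P_{<0}(0),\hat P_{<0}(T))=\ind(P_{<0}(0),\hat P_{<0}(T)).
\]

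The second step is to identify both sides of this equation with the quantities appearing in the main theorem. On the one hand, since $Q(t,0)$ is a strongly continuous family of unitaries with $\hat A(t)=Q(t,0)^*A(t)Q(t,0)$, Theorem \ref{conflow} yields $\sfl(\hat A)=\sfl(A)$. On the other hand, Theorem \ref{inD} gives $\ind(D_{\APS})=\ind(P_{<0}(0),\hat P_{<0}(T))$ under the Fredholm assumption. Chaining these three identities produces
\[
\ind(D_{\APS})=\ind(P_{<0}(0),\hat P_{<0}(T))=\sfl(\hat A)=\sfl(A),
\]
which is the claim.

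The potential obstacle is ensuring that $\hat A$ really is a legitimate family to which Theorem \ref{flowind} applies, since Theorem \ref{flowind} was stated for the fixed family $A$ satisfying the assumptions of Section \ref{set}. This is not a genuine difficulty, however, because those assumptions are verified above (self-adjointness, Fredholmness on $W$, and strong continuous differentiability of $\hat A$), and the proof of Theorem \ref{flowind} was written so as to apply to any such family. The only other subtlety is the formal application of Theorem \ref{conflow}: one must note that $\sfl(\hat A)$ is defined, which has just been established, so the remark following Theorem \ref{conflow} applies without issue.
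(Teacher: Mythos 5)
Your proposal is correct and follows essentially the same route as the paper: both reduce the statement to Theorem \ref{flowind} applied to the conjugated family $\hat A(t)=Q(0,t)A(t)Q(t,0)$, using Lemma \ref{hatdiff} to verify the standing assumptions, Theorem \ref{inD} (and its remark for the restricted intervals) to translate Fredholmness and indices, and Theorem \ref{conflow} to identify $\sfl(\hat A)$ with $\sfl(A)$. The chain of equalities you give is precisely the one in the paper's proof of Theorem \ref{mainII}.
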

In light of Theorem \ref{inD}, this is equivalent to the following:
\begin{thm}
\head{(Main Theorem II)}
\label{mainII}
If $(P_{<0}(0),\hat P_{<0}(t))$ is Fredholm for all $t\in[0,T]$, we have
$$\ind(D_{\APS})=\sfl(A).$$
\end{thm}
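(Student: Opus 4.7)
The strategy is to apply Theorem \ref{flowind} not to $A$ directly but to the conjugated family $\hat A(t) = Q(0,t)A(t)Q(t,0)$, then transfer the resulting identity back to $A$ and $D_{\APS}$ using the invariance results already established. The key observation is that $\hat A$ has been designed so that its negative spectral projection is exactly $\hat P_{<0}(t)$, while unitary conjugation preserves the spectral flow; this simultaneously converts the abstract Fredholm-pair index on the $\hat A$ side into something that matches $\ind(D_{\APS})$ via Theorem \ref{inD}.

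First I would verify that $\hat A$ fits the setting of Section \ref{s3}. Each $Q(t,0)$ is a unitary of $H$ preserving $W$, so $\hat A(t)$ is a self-adjoint Fredholm operator on $H$ with domain $W$; strong continuous differentiability of $\hat A$ in $B(W,H)$ is exactly Lemma \ref{hatdiff}. Functional calculus is equivariant under unitary conjugation, so
$$\chi_{(-\infty,0)}(\hat A(t)) = Q(0,t)P_{<0}(t)Q(t,0) = \hat P_{<0}(t),$$
and in particular $\hat P_{<0}(0) = P_{<0}(0)$ since $Q(0,0) = I$. Thus the hypothesis that $(P_{<0}(0), \hat P_{<0}(t))$ be a Fredholm pair for every $t \in [0,T]$ is precisely the Fredholm-pair hypothesis of Theorem \ref{flowind} applied to $\hat A$.

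The second step is to apply Theorem \ref{flowind} to $\hat A$, yielding
$$\sfl(\hat A) = \ind(\hat P_{<0}(0), \hat P_{<0}(T)) = \ind(P_{<0}(0), \hat P_{<0}(T)).$$
To rewrite the left-hand side, I would invoke Theorem \ref{conflow} with the unitary family $U(t) = Q(t,0)$, so that $U(t)^*A(t)U(t) = \hat A(t)$; this gives $\sfl(\hat A) = \sfl(A)$. To rewrite the right-hand side, I would invoke Theorem \ref{inD}, which identifies $\ind(D_{\APS})$ with $\ind(P_{<0}(0), \hat P_{<0}(T))$ under exactly the Fredholm assumption in force. Chaining these three equalities gives $\ind(D_{\APS}) = \sfl(A)$.

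There is no substantial obstacle remaining, since all ingredients are already in place; the work has been done in constructing $\hat A$ with the right spectral projections (so that Theorem \ref{flowind} and Theorem \ref{inD} talk about the \emph{same} Fredholm pair $(P_{<0}(0), \hat P_{<0}(T))$) and in verifying that $\hat A$ is regular enough for Section \ref{s3} to apply. The one point to double-check is that the Fredholm-pair hypothesis of the main theorem suffices at \emph{every} $t$, not only at $t = T$: this is needed because Theorem \ref{flowind} requires Fredholmness of the pair along the entire path in order to run its partition-and-deformation argument, and it is exactly what the hypothesis of Theorem \ref{mainII} supplies.
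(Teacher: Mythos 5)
Your proposal is correct and follows essentially the same route as the paper's own proof: apply Theorem \ref{flowind} to the conjugated family $\hat A$, whose negative spectral projections are exactly $\hat P_{<0}(t)$, then transfer back via Theorem \ref{conflow} (invariance of the spectral flow under unitary conjugation) and Theorem \ref{inD} (identifying $\ind(D_{\APS})$ with $\ind(P_{<0}(0),\hat P_{<0}(T))$). Your closing remark that the Fredholm-pair hypothesis is needed at every $t$, not just $t=T$, is exactly the point the paper's hypothesis is designed to supply.
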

\begin{proof}
$\hat A$ has all properties required of $A$. We can thus apply Theorem \ref{flowind} to it. The spectral projections of $\hat A$ are given by
$$\chi_{(-\infty, 0)}(\hat A(t))=\hat P_{<0}(t).$$
Using theorems \ref{inD} and \ref{conflow}, we can now conclude
\begin{align*}
&\ind (D_{\APS})\\
&\stackrel{\ref{inD}}{=}\ind(P_{<0}(0),\hat P_{<0}(T))\\
&=\ind(\hat P_{<0}(0),\hat P_{<0}(T))\\
&\stackrel{\ref{flowind}}{=}\sfl(\hat A)\\
&\stackrel{\ref{conflow}}{=}\sfl(A).
\end{align*}
\end{proof}
\head{Remark:} By Theorem \ref{sympair}, $(P_{<0}(0),\hat P_{<0}(t))$ is Fredholm if and only if $(P_{\geq 0}(0),Q(0,t)P_{\geq 0}(t)Q(t,0))$ is Fredholm. Thus one can just as well consider positive instead of negative spectral projections.\\\\
As an immediate consequence of the main theorem, we obtain a gluing formula for the index of $D_{\APS}$:
\begin{thm}
If $(D|_{[s,t]})_{\APS}$ is Fredholm for all $s,t\in [0,T]$, we have for all $r<s<t\in[0,T]$:
$$\ind((D|_{[r,t]})_{\APS})=\ind((D|_{[r,s]})_{\APS})+\ind((D|_{[s,t]})_{\APS})$$
\end{thm}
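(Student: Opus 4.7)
The plan is to reduce everything to the additivity of spectral flow via the main theorem. The additivity $\sfl(A|_{[r,t]})=\sfl(A|_{[r,s]})+\sfl(A|_{[s,t]})$ was already observed immediately after the well-definedness of the spectral flow (it is obvious from the definition by picking a flow partition of $[r,t]$ that contains $s$ as a partition point). So the only thing to do is to convert each of the three indices into a spectral flow.

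Concretely, for any subinterval $[a,b]\subseteq[0,T]$, the restricted family $A|_{[a,b]}$ still satisfies all the assumptions of Section \ref{set} (strong continuous differentiability, self-adjointness and Fredholmness of each $A(t)$, common domain $W$), so the entire machinery of sections \ref{s3}--\ref{s5} applies verbatim to $A|_{[a,b]}$. In particular, the Main Theorem \ref{main} applied with $[0,T]$ replaced by $[a,b]$ says: if $(D|_{[a,b']})_{\APS}$ is Fredholm for every $b'\in[a,b]$, then
$$\ind\bigl((D|_{[a,b]})_{\APS}\bigr)=\sfl\bigl(A|_{[a,b]}\bigr).$$

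I would now apply this in turn to the three intervals $[r,s]$, $[s,t]$, $[r,t]$. The hypothesis of the present theorem is that $(D|_{[s_1,s_2]})_{\APS}$ is Fredholm for \emph{all} $s_1,s_2\in[0,T]$, which is precisely strong enough to guarantee the Fredholm hypothesis of the main theorem on each of the three subintervals (e.g.\ for $[r,t]$ one needs Fredholmness of $(D|_{[r,b']})_{\APS}$ for all $b'\in[r,t]$, which is a special case of the blanket assumption). Hence the three indices are equal to $\sfl(A|_{[r,s]})$, $\sfl(A|_{[s,t]})$, and $\sfl(A|_{[r,t]})$ respectively, and additivity of the spectral flow gives
$$\ind\bigl((D|_{[r,t]})_{\APS}\bigr)=\sfl(A|_{[r,t]})=\sfl(A|_{[r,s]})+\sfl(A|_{[s,t]})=\ind\bigl((D|_{[r,s]})_{\APS}\bigr)+\ind\bigl((D|_{[s,t]})_{\APS}\bigr).$$

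There is no real obstacle here; the only thing worth double-checking is that applying the main theorem to $A|_{[a,b]}$ is legitimate, i.e.\ that all preliminary constructions (evolution operator, spectral projections, $\hat A$, flow partitions) are defined intrinsically on the subinterval and do not secretly require the data on $[0,T]$. This is clear by inspection of their definitions, so the proof is purely a matter of assembling the three applications of Theorem \ref{main} with additivity of $\sfl$.
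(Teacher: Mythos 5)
Your proposal is correct and is exactly the paper's argument: apply the Main Theorem to each of the three restricted families $A|_{[r,s]}$, $A|_{[s,t]}$, $A|_{[r,t]}$ (the blanket Fredholmness hypothesis supplies the main theorem's hypothesis on every subinterval) and then use additivity of the spectral flow. The paper's proof is just a terser version of the same chain of equalities, with the verification you spell out left implicit.
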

\begin{proof}
We have
\begin{align*}
&\ind((D|_{[r,t]})_{\APS})\\
&=\sfl(A|_{[r,t]})\\
&=\sfl(A|_{[r,s]})+\sfl(A|_{[s,t]})\\
&=\ind((D|_{[r,s]})_{\APS})+\ind((D|_{[s,t]})_{\APS}).
\end{align*}
\end{proof}
\subsection{Sufficient Conditions for the Main Theorem}
It may not be easy to determine a priori whether $D_{\APS}$ is Fredholm. The evolution operator and thus $\hat P_{<0}(t)$ might be difficult to compute as well. The aim of this section is to give other sufficient criteria for the use of the main theorem that will be much less general, but sometimes easier to check.

\begin{thm}
\label{compair}
For $t\in [0,T]$, the following are equivalent:
\begin{enumerate}
\item $Q_{+-}(t,0):=P_{\geq 0}(t)Q(t,0)P_{<0}(0)$ and $Q_{-+}(t,0):=P_{<0}(t)Q(t,0)P_{\geq 0}(0)$ are compact.
\item $(P_{<0}(0), \hat P_{<0}(t))$ is a compact pair, i.e. the difference of the two is compact.
\end{enumerate}
Each of the above conditions imply that $D_{\APS}$ is Fredholm and
$$\ind(D_{\APS})=\sfl(A).$$
\end{thm}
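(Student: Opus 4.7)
The plan is to establish the equivalence of (1) and (2) by a direct block-matrix computation using unitarity of $Q(t,0)$, and then deduce Fredholmness of $(D|_{[0,t]})_\APS$ for all $t$ from Lemma \ref{compfred} and Theorem \ref{inD}, so that the index formula follows from the main theorem (Theorem \ref{main}).

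For the equivalence, I would fix $t\in[0,T]$, write $Q:=Q(t,0)$, and expand it as a $2\times 2$ block operator with respect to the orthogonal splittings $H=H_{<0}(0)\oplus H_{\geq 0}(0)$ at the source and $H=H_{<0}(t)\oplus H_{\geq 0}(t)$ at the target:
$$Q=\begin{pmatrix} Q_{--} & Q_{-+} \\ Q_{+-} & Q_{++}\end{pmatrix},$$
so that $Q^*=Q(0,t)$ has the conjugate-transpose block form. Since $Q$ is unitary, the identity $Q^*Q=I$ gives the four block relations
$$Q_{--}^*Q_{--}+Q_{+-}^*Q_{+-}=I,\qquad Q_{-+}^*Q_{-+}+Q_{++}^*Q_{++}=I,$$
$$Q_{--}^*Q_{-+}+Q_{+-}^*Q_{++}=0,\qquad Q_{-+}^*Q_{--}+Q_{++}^*Q_{+-}=0.$$
A direct computation of $\hat P_{<0}(t)=Q^*P_{<0}(t)Q$ yields
$$\hat P_{<0}(t)=\begin{pmatrix} Q_{--}^*Q_{--} & Q_{--}^*Q_{-+} \\ Q_{-+}^*Q_{--} & Q_{-+}^*Q_{-+}\end{pmatrix},$$
and subtracting $P_{<0}(0)=\operatorname{diag}(I,0)$, then substituting the unitarity relations, produces
$$P_{<0}(0)-\hat P_{<0}(t)=\begin{pmatrix} Q_{+-}^*Q_{+-} & Q_{+-}^*Q_{++} \\ Q_{++}^*Q_{+-} & -Q_{-+}^*Q_{-+}\end{pmatrix}.$$
Now $Q_{+-}$ and $Q_{-+}$ compact immediately makes every block a product involving a compact factor, hence the difference is compact. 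Conversely, if the difference is compact, its diagonal blocks $Q_{+-}^*Q_{+-}$ and $-Q_{-+}^*Q_{-+}$ are compact; since $K^*K$ compact implies $K$ compact (via its positive square root, or because $\|Kx\|^2=\langle K^*Kx,x\rangle$), both $Q_{+-}$ and $Q_{-+}$ are compact. This establishes $(1)\Leftrightarrow(2)$ for every $t$.

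For the final implication, suppose either condition holds at every $t\in[0,T]$. Then by $(2)$ and Lemma \ref{compfred}, $(P_{<0}(0),\hat P_{<0}(t))$ is a Fredholm pair for every $t$. Applying the remark after Theorem \ref{inD} to $A|_{[0,t]}$, this is exactly the statement that $(D|_{[0,t]})_\APS$ is Fredholm for every $t\in[0,T]$. The hypothesis of Theorem \ref{main} is therefore satisfied, and we conclude $\ind(D_\APS)=\sfl(A)$.

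The only place where I expect any real bookkeeping difficulty is the block computation for $(1)\Leftrightarrow(2)$: one must remember that the source decomposition of $Q(t,0)$ uses $A(0)$ while the target decomposition uses $A(t)$, so $Q^*$ is genuinely an operator with a different block structure than $Q$, and the off-diagonal entries of $P_{<0}(0)-\hat P_{<0}(t)$ can be rewritten in two equivalent ways via the off-diagonal identities of $Q^*Q=I$. Once the matrix is written down correctly, the compactness equivalence is transparent and the rest is a direct appeal to earlier results.
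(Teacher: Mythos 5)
Your proposal is correct, and the final deduction (Lemma \ref{compfred} giving the Fredholm pair at every $t$, then the main theorem in the form \ref{mainII}, or equivalently via the remark after Theorem \ref{inD} plus Theorem \ref{main}) is exactly what the paper does. Where you differ is in the proof of $(1)\Leftrightarrow(2)$: the paper never writes down the full block matrix of $\hat P_{<0}(t)$. Instead it sets $S_0:=P_{<0}(0)-\hat P_{<0}(t)$ and multiplies on the left by the isometry $Q(t,0)$, so that compactness of $S_0$ is equivalent to compactness of $S:=Q(t,0)P_{<0}(0)-P_{<0}(t)Q(t,0)$; splitting $S=P_{\geq 0}(t)S+P_{<0}(t)S$ then identifies the two pieces directly as $Q_{+-}(t,0)$ and $-Q_{-+}(t,0)$, and the ideal property of compacts finishes both directions in two lines, using only that $Q$ is an isometry. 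Your route instead computes $Q(0,t)P_{<0}(t)Q(t,0)$ in $2\times 2$ block form, invokes the unitarity relations from $Q^*Q=I$, and needs the extra (standard) fact that compactness of $K^*K$ forces compactness of $K$ to recover $Q_{+-},Q_{-+}$ from the diagonal blocks; this is more bookkeeping, and you must indeed keep track that the source and target splittings come from $A(0)$ and $A(t)$ respectively, but it is a valid argument and has the small advantage of displaying all four blocks of $P_{<0}(0)-\hat P_{<0}(t)$ explicitly. Both arguments rely on $Q(t,0)$ being unitary (an invertible isometry), which is guaranteed by properties 1--3 of Theorem \ref{evop}, so there is no gap.
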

\head{Remark:} The former condition is shown in [B\"aSt] in the setting considered there (Lemma 2.6), so the main theorem is indeed applicable to that special case.
\begin{proof}
The second condition implies that $(P_{<0}(0), \hat P_{<0}(t))$ is Fredholm for all $t\in [0,T]$, by \ref{compfred}. Thus we can apply the second main theorem (\ref{mainII}) to obtain the second part of the claim.

What is left to show is that the two conditions are equivalent.

For $t\in [0,T]$, let $$S_0:=P_{<0}(0)-\hat P_{<0}(t).$$  
and
$$S:=Q(t,0)S_0=Q(t,0)P_{<0}(0)-P_{<0}(t)Q(t,0).$$
Condition 2. holds if and only if $S_0$ is compact, which is true if and only if $S$ is compact. 
Let $S_{\geq 0}:=P_{\geq 0}(t)S$ and $S_{<0}:=P_{<0}(t)S$ .
As compact operators are an ideal, $S_{\geq 0}$ and $S_{<0}$ are compact if $S$ is compact. Conversely,
$$S=S_{\geq 0}+S_{<0}$$
is compact if $S_{\geq 0}$ and $S_{<0}$ are.

Since
$$S_{\geq 0}=P_{\geq 0}(t)Q(t,0)P_{<0}(0)=Q_{+-}(t,0)$$
and
$$S_{<0}=P_{<0}(t)Q(t,0)P_{<0}(0)-P_{<0}(t)Q(t,0)=-P_{<0}(t)Q(t,0)(I-P_{<0}(0))=-Q_{-+}(t,0),$$
the theorem follows.
\end{proof}

\begin{thm}
If $A'(t)$ is compact in $B(W,H)$ for all $t\in[0,T]$, then $D_{\APS}$ is Fredholm and
$$\ind(D_{\APS})=\sfl(A).$$
\end{thm}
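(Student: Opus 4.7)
The plan is to apply Theorem \ref{compair}: it suffices to show that $\hat P_{<0}(t) - P_{<0}(0)$ is compact on $H$ for every $t\in[0,T]$, since then $(P_{<0}(0), \hat P_{<0}(t)) = (\hat P_{<0}(0), \hat P_{<0}(t))$ is a compact pair, and Theorem \ref{compair} yields both Fredholmness of $D_{\APS}$ and the identity $\ind(D_{\APS})=\sfl(A)$.

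First I transfer the compactness hypothesis on $A'$ to a resolvent statement for $\hat A$. By Lemma \ref{hatdiff}, $\hat A'(t) = Q(0,t)\,A'(t)\,Q(t,0)$; since $Q(t,0)$ is bounded on $W$ and $Q(0,t)$ is bounded on $H$ by Theorem \ref{evop}, and $A'(t)$ is compact in $B(W,H)$ by hypothesis, $\hat A'(t)$ is compact in $B(W,H)$. Applying Theorem \ref{dcomp} to the strongly continuously differentiable family $\hat A:[0,T]\to B(W,H)$ gives that $\hat A(t) - \hat A(0)$ is compact as an operator $W\to H$. The second resolvent identity
$$(\hat A(t)+i)^{-1} - (\hat A(0)+i)^{-1} = -(\hat A(t)+i)^{-1}\bigl(\hat A(t)-\hat A(0)\bigr)(\hat A(0)+i)^{-1}$$
then writes the resolvent difference as a composition of a bounded map $H\to W$ (Lemma \ref{idiff}), a compact map $W\to H$, and a bounded map $H\to H$, and so this difference is compact on $H$. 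The Stone--Weierstrass theorem, applied to the $*$-subalgebra of bounded continuous functions on $\R$ generated by $(x\pm i)^{-1}$ and the constants, shows that this subalgebra is uniformly dense in the continuous functions on the one-point compactification of $\R$; hence $f(\hat A(t)) - f(\hat A(0))$ is compact for every continuous $f$ on $\R$ vanishing at infinity.

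The main obstacle is the passage from such $f$ to the characteristic function $\chi_{(-\infty,0)}$, which does not vanish at $-\infty$. Because $\hat A(0)$ and $\hat A(t)$ are Fredholm, $0$ is not in their essential spectrum, and Lemma \ref{pointdisc} yields $\epsilon>0$ such that $(-\epsilon,\epsilon)\setminus\{0\}$ lies in the resolvent set of both operators. Any continuous $\psi:\R\to[0,1]$ equal to $1$ on $(-\infty,-\epsilon]$ and $0$ on $[0,\infty)$ then satisfies $\psi(\hat A(s)) = \hat P_{<0}(s)$ by functional calculus, so the problem reduces to showing $\psi(\hat A(t))-\psi(\hat A(0))$ is compact for such a bounded continuous $\psi$. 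This is the standard fact from self-adjoint perturbation theory that, under a compact perturbation with a common spectral gap, the spectral projection onto a half-line whose endpoint lies in the resolvent set of both operators changes by a compact operator. Once this is granted, Theorem \ref{compair} concludes the proof.
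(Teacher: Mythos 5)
Your proof begins exactly as the paper's does: Lemma \ref{hatdiff} together with Theorem \ref{dcomp} gives that $\hat A(t)-\hat A(0)=\hat A(t)-A(0)$ is compact in $B(W,H)$, and the goal is to conclude that $\hat P_{<0}(t)-P_{<0}(0)$ is compact so that Theorem \ref{compair} applies. The divergence is what happens in between. The paper simply cites Corollary~3.5 of [LE], which states precisely that a relatively compact perturbation of a self-adjoint Fredholm operator changes $\chi_{[0,\infty)}(\cdot)$ by a compact operator. You instead attempt to re-derive this fact: the resolvent-difference computation and the Stone--Weierstrass step are correct and show that $f(\hat A(t))-f(\hat A(0))$ is compact for every $f$ continuous on the one-point compactification of $\R$ (equivalently, every $f\in C_0(\R)$ plus constants). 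That part is a clean, self-contained argument.

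The gap is the final paragraph. The function you need, $\psi$ with $\psi\equiv 1$ on $(-\infty,-\epsilon]$ and $\psi\equiv 0$ on $[0,\infty)$, has different limits at $+\infty$ and $-\infty$, so it is not covered by your Stone--Weierstrass step (which only reaches functions extending continuously to the \emph{one-point} compactification, i.e. having a single common limit at infinity). Passing from $C_0$-type functions to such a $\psi$ is exactly the nontrivial content of the theorem you are trying to prove, and asserting ``this is the standard fact from self-adjoint perturbation theory'' does not close it. Your own machinery only gives you that $(\psi(\hat A(t))-\psi(\hat A(0)))(\hat A(0)+i)^{-1}$ is compact, and since $(\hat A(0)+i)^{-1}$ is not surjective you cannot strip it off to conclude compactness of $\psi(\hat A(t))-\psi(\hat A(0))$ itself. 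To finish, you should either cite Corollary 3.5 of [LE] directly (as the paper does) or supply an actual proof of the spectral-projection step, for instance via the integral representation $\mathrm{sign}(B)=\frac{2}{\pi}\int_0^\infty B(B^2+s^2)^{-1}\,ds$ for invertible self-adjoint $B$, after first discarding the finite-rank projection onto $\ker\hat A(s)$ to reduce to the invertible case.
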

\begin{proof}
By Lemma $\ref{hatdiff}$ and Theorem $\ref{dcomp}$, $\hat A(t)- A(0)$ is compact in $B(W,H)$ for every $t\in[0,T]$.
Corollary 3.5 in [LE] states that in this case $\chi_{[0,\infty)}(\hat A(t))-\chi_{[0,\infty)}(A(0))$ is compact in $B(H)$.
Thus 
$$\hat P_{<0}(t)-P_{<0}(0)=(I-\chi_{[0,\infty)}(\hat A(t)))-(I-\chi_{[0,\infty)}(A(0)))=-(\chi_{[0,\infty)}(\hat A(t))-\chi_{[0,\infty)}(A(0)))$$
is compact, so $(P_{<0}(0),\hat P_{<0}(t))$ is a compact pair and hence a Fredholm pair by Lemma \ref{compfred} for all $t\in [0,T]$.
By Theorem \ref{compair}, we get the desired result.
\end{proof}
\head{Remark:} The counterexample in the next chapter shows that it is not sufficient to ask for relative compactness of $A(t)-A(0)$.\\\\
For the next theorem, we will need the following lemma:
\begin{lemma}
\label{test}
If $J$ is an interval and $P:J\rightarrow B(H)$ is a continuous family of commuting projections, $P$ is constant.
\end{lemma}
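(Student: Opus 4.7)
The plan is to establish local constancy of $P$ and then conclude constancy from connectedness of $J$. Fix $s \in J$; the goal will be to show that $P(t) = P(s)$ for all $t$ in some neighborhood of $s$.

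The key algebraic observation will be that, since $P(s)$ and $P(t)$ are commuting orthogonal projections, the operator $(P(s)-P(t))^2$ is itself an orthogonal projection. A short expansion using $P(s)^2 = P(s)$, $P(t)^2 = P(t)$, and $P(s)P(t) = P(t)P(s)$ gives
$$(P(s) - P(t))^2 = P(s) + P(t) - 2P(s)P(t),$$
which is manifestly self-adjoint, and one checks directly (again using commutativity) that this operator is idempotent. This is the only nonformal ingredient of the argument, and it is where the commutativity hypothesis is used in an essential way.

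The remainder will be topology. Norm continuity of $P$ implies $||(P(s)-P(t))^2|| \to 0$ as $t \to s$. A nonzero orthogonal projection has operator norm exactly $1$, so $(P(s)-P(t))^2$ must vanish for $t$ close enough to $s$; since $P(s)-P(t)$ is self-adjoint, the vanishing of its square implies $||P(s)-P(t)||^2 = ||(P(s)-P(t))^2|| = 0$, i.e.\ $P(t) = P(s)$. This gives local constancy on a neighborhood of an arbitrary point $s \in J$, and connectedness of the interval $J$ then yields constancy globally.

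I do not anticipate any substantial obstacle. The only piece of content beyond basic continuity is the idempotency identity for $(P(s)-P(t))^2$, a one-line calculation that relies on commutativity; without it the lemma would be false, since arbitrary norm-small perturbations of a projection need not be projections.
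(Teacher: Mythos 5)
Your proof is correct, and it takes a mildly but genuinely different route from the paper's. The paper also reduces to the neighbourhood where $\|P(t)-P(s)\|<1$, but there the auxiliary projection it builds from commutativity is the product $P(t)P(s)$; it then compares ranges, arguing by contradiction that if $P(t)\neq P(t)P(s)$ there is a unit vector $x\in\Ran(P(t))$ with $P(s)x=0$, forcing $\|P(t)x-P(s)x\|=\|x\|$ and contradicting the norm bound, whence $P(t)=P(t)P(s)=P(s)P(t)=P(s)$. You instead build the projection $(P(s)-P(t))^2$ (your idempotency computation is right and is exactly where commutativity enters), and then use the dichotomy that an orthogonal projection of norm less than $1$ vanishes, together with the $C^*$-identity $\|T\|^2=\|T^2\|$ for self-adjoint $T$, to get $P(t)=P(s)$ directly. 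Your version is a bit more compact and purely algebraic, avoiding the range comparison and the contradiction; the paper's version uses only elementary vector estimates and no $C^*$-identity. Both arguments implicitly read ``continuous'' as norm continuous, which is the correct reading here (the lemma fails for merely strongly continuous families, e.g.\ multiplication by $\chi_{[0,t]}$ on $L^2[0,1]$), and both finish identically via local constancy and connectedness of $J$.
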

\begin{proof}
For $s\in J$, there is a neighborhood $U$ of $s$ such that for $t\in U$, we have
$$||P(t)-P(s)||<1.$$
For $t\in U$, we have
$$P(t)P(s)P(t)P(s)=P(t)P(t)P(s)P(s)=P(t)P(s)$$
and
$$(P(t)P(s))^*=P(s)P(t)=P(t)P(s).$$
Thus $P(t)P(s)$ is a (orthogonal) projection as  well. The range of $P(t)P(s)$ is a subspace of the range of $P(t)$. It is closed, as (orthogonal) projections have closed range.

Assume for contradiction that it is a proper subspace, i.e. $P(t)\neq P(t)P(s)$. Then there must be a non-zero $x\in \Ran(P(t))$ such that $x\bot \Ran(P(t)P(s))$, i.e. $x\in \Ker(P(t)P(s))$.
Then
$$P(s)x=P(s)P(t)x=P(t)P(s)x=0.$$
But this implies
$$||P(t)x-P(s)x||=||x||,$$
contradicting $||P(t)-P(s)||<1$.
We can thus conclude the assumption was wrong and
$$P(t)=P(t)P(s).$$
The same reasoning with $s$ and $t$ interchanged yields
$$P(s)=P(s)P(t)=P(t)P(s)=P(t).$$
As this holds for all $t\in U$, for a neighborhood $U$ of every $s\in J$, $P$ is locally constant. As the interval is connected, $P$ is constant.
\end{proof}

\begin{thm}
Let $R(t):=(A(t)-i)^{-1}$. If  $R(s)$ and $R(t)$ commute for all $s,t\in[0,T]$, $D_{\APS}$ is Fredholm with index $\sfl(A)$.
\end{thm}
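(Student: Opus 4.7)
The plan is to apply Theorem \ref{mainII} by showing that the commutation hypothesis forces $\hat P_{<0}(t) - P_{<0}(0)$ to be compact for every $t \in [0,T]$; Lemma \ref{compfred} and Theorem \ref{mainII} will then yield the conclusion. The initial observation to record is that the resolvents $R(s) = (A(s)-i)^{-1}$ are normal (since $A(s)$ is self-adjoint, both $R(s)R(s)^*$ and $R(s)^*R(s)$ equal $(A(s)^2+1)^{-1}$), so commutativity of $R(s)$ and $R(t)$ implies, by Fuglede's theorem, that $R(s), R(s)^*, R(t), R(t)^*$ all commute pairwise. Consequently the bounded Borel functional calculi of $A(s)$ and $A(t)$ mutually commute: every $P_I(s)$ commutes with every $P_J(t)$, and every $\exp(i\tau A(s))$ commutes with every $P_J(r)$.

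My first step would be to show $\hat P_{<0}(t) = P_{<0}(t)$. Each approximant $Q_n(u,v)$ from the proof of Theorem \ref{evop} is a finite product of exponentials $\exp(i\rho A(t_k^n))$, so by the preceding remark it commutes with every $P_{<0}(r)$. Since $Q_n(u,v)x \to Q(u,v)x$ strongly on $H$ (this is precisely the convergence established in the construction of $Q$) and $P_{<0}(r)$ is bounded, passing to the strong limit on both sides of $Q_n(u,v) P_{<0}(r) = P_{<0}(r) Q_n(u,v)$ gives $Q(u,v) P_{<0}(r) = P_{<0}(r) Q(u,v)$. In particular $Q(0,t)$ commutes with $P_{<0}(t)$, so
$$\hat P_{<0}(t) = Q(0,t) P_{<0}(t) Q(t,0) = P_{<0}(t) Q(0,t) Q(t,0) = P_{<0}(t).$$

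Next I would show $P_{<0}(t) - P_{<0}(0)$ is finite rank for every $t$. Using Theorem \ref{flowpart}, choose a flow partition $(t_n)_{n=0}^N$, $(a_n)_{n=1}^N$ that works for both $A$ and $-A$, so that $\pm a_n \notin \spec(A(r))$ for $r \in [t_{n-1}, t_n]$. Theorem \ref{kat} makes $P_{<a_n}$ and $P_{<-a_n}$ norm continuous on $[t_{n-1}, t_n]$, and by the commutation hypothesis they commute pairwise along this interval; Lemma \ref{test} therefore forces both families to be constant on $[t_{n-1}, t_n]$. Hence $P_{[-a_n, a_n)} = P_{<a_n} - P_{<-a_n}$ is also constant there, with common finite-dimensional range $H_n$. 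Writing $P_{<0}(r) = P_{[-a_n, 0)}(r) + P_{<-a_n}(r)$ on this interval and using that the second summand is constant, we find that $P_{<0}(r) - P_{<0}(t_{n-1})$ takes values in $H_n$ and is therefore finite rank. Telescoping over the partition yields that $P_{<0}(t) - P_{<0}(0)$ is finite rank for every $t \in [0,T]$.

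Combining the two ingredients, $\hat P_{<0}(t) - P_{<0}(0)$ is compact, so $(P_{<0}(0), \hat P_{<0}(t))$ is a Fredholm pair by Lemma \ref{compfred}, and Theorem \ref{mainII} delivers $\ind(D_{\APS}) = \sfl(A)$. The main obstacle is the limit argument for commutation with $Q$: it works only because $Q_n \to Q$ \emph{strongly on $H$}, not merely in $B(W,H)$, which is exactly what allows one to compose with the bounded operator $P_{<0}(r)$ on both sides and pass to the limit without any domain worries. A secondary subtlety is extracting pairwise commutation of the full bounded functional calculi from the commutation of the resolvents alone, where Fuglede's theorem supplies the missing step.
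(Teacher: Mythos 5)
Your proof is correct, and while it follows the same overall strategy as the paper (show that the commutation hypothesis forces $\hat P_{<0}(t)=P_{<0}(t)$, establish Fredholmness of the pair $(P_{<0}(0),\hat P_{<0}(t))$, then invoke Theorem \ref{mainII}), the two key steps are executed differently. For the identity $\hat P_{<0}(t)=P_{<0}(t)$, the paper stays at the level of the stated properties of $Q$: it first shows that any bounded operator commuting with $R(s)$ commutes with $A(s)$ on $W$, then proves $R(t)Q(t,0)=Q(t,0)R(t)$ by differentiating $s\mapsto Q(t,s)R(t)Q(s,0)$ and integrating, and finally sets $\rho=Q(t,0)$ to conclude $\hat A(t)=A(t)$, so that all evolved spectral projections coincide with the original ones. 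You instead reach inside the construction of Theorem \ref{evop}, letting the commutation pass through the approximants $Q_n$ (finite products of exponentials) and the strong limit $Q_n x\to Qx$; this is legitimate, avoids the domain bookkeeping with $\hat A$, but relies on Fuglede's theorem to get mutual commutation of the bounded functional calculi of $A(s)$ and $A(t)$ --- a step the paper also needs implicitly when it asserts that $P_{<a}(t)=(\chi_{(-\infty,a)}\circ f^{-1})(R(t))$ commutes with everything commuting with $R(t)$, so you are not assuming more, just being explicit. For the Fredholmness of the pair, the paper argues by connectedness: locally $P_{<a}$ is norm continuous (Theorem \ref{kat}), commuting, hence constant (Lemma \ref{test}), and Fredholmness of $(P_{<0}(0),P_{<0}(s))$ is equivalent to that of $(P_{<0}(0),P_{<a}(s))$ up to a finite-dimensional piece, so the set of good $s$ is open and closed. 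Your argument proves the stronger statement that $P_{<0}(t)-P_{<0}(0)$ is actually finite rank, via a flow partition valid for both $A$ and $-A$, local constancy of $P_{<\pm a_n}$, and telescoping; you then conclude through Lemma \ref{compfred} (equivalently Theorem \ref{compair}). This buys a sharper intermediate result (a compact, indeed finite-rank, pair) at the modest cost of needing the two-sided flow partition from Theorem \ref{flowpart}; both routes are sound.
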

\head{Remark:} Basically, the assumption is that all $A(t)$ commute with each other. Considering the resolvents is necessary, as $A(t)A(s)$ is not defined on all of $W$.
\begin{proof}
Assume that $\rho:H\rightarrow H$ is a bounded operator such that
$$\rho R(s)=R(s)\rho$$ 
for $s,\in [0,T]$. Note that this implies 
$$\rho(W)=\Ran(\rho R(s))\subseteq\Ran(R(s))=W.$$
 We have
\begin{align*}
\rho A(s)&=\rho (A(s)+i)-i\rho\\
&=(A(s)+i)R(s)\rho (A(s)+i)-i\rho \\
&=(A(s)+i)\rho R(s)(A(s)+i)-i\rho \\
&=(A(s)+i)\rho -i\rho \\
&=A(s)\rho.
\end{align*}
For $\rho=R(t)$, we obtain
$$R(t)A(s)=A(s)R(t).$$
Thus we have for $t\in [0,T]$ and $x\in W$
\begin{align*}
&R(t)Q(t,0)-Q(t,0)R(t)\\
&=\int\limits_0^t\pds (Q(t,s)R(t)Q(s,0))xds\\
&=\int\limits_0^tQ(t,s)\left(-iA(s)R(t)+R(t)iA(s)\right)Q(s,0)xds\\
&=0.
\end{align*}
Setting $\rho =Q(t,0)$ in the above, we obtain
$$Q(t,0)A(t)=A(t)Q(t,0).$$ 
We can conclude
$$\hat A(t)=Q(0,t)Q(t,0)A(t)=A(t)$$
and thus we have 
$$\hat P_{<a}(t)=P_{<a}(t).$$

Let $f(\lambda):=\frac{1}{\lambda-i}$.
As for $t\in [0,T]$
$$P_{<a}(t)=\chi_{(-\infty,a)}(f^{-1}(f(A(t))))=(\chi_{(-\infty,a)}\circ f^{-1})(R(t))$$
the projection $P_{<a}(t)$ commutes with every operator that commutes with $R(t)$. In particular, it commutes with $R(s)$ for $s\in[0,T]$. As $P_{<a}(s)$ commutes with all operators commuting with $R(s)$, we can conclude that $P_{<a}(t)$ and $P_{<a}(s)$ commute.

For $t\in[0,T]$ choose $a\in\Rp$ and a neighborhood $U$ of $t$ such that for all $s\in U$ $H_{[0,a)}(s)$ is finite dimensional and $a\notin \spec(A(s))$  (possible by Lemma \ref{Udisc}). The latter allows us to use  Theorem \ref{kat} to conclude that $P_{<a}$ is continuous on $U$, so by Lemma \ref{test} it is constant.

$$[P_{<0}(s):H_{<0}(0)\rightarrow H_{<0}(s)]$$ is the composition of $$[P_{<a}(s):H_{<0}(0)\rightarrow H_{(-\infty,a)}(s)]$$ with $$[P_{<0}(s):H_{(-\infty,a)}(s)\rightarrow H_{<0}(s)]$$ and the latter is Fredholm, as $H_{[0,a)}(s)$ is finite dimensional. Thus the first of the three is Fredholm if and only if the second is. We can conclude that $(P_{<0}(0),P_{<0}(s))$ is a Fredholm pair if and only if $(P_{<0}(0), P_{<a}(s))$ is. As the latter is the same for all $s\in U$, we can conclude that if $(P_{<0}(0),P_{<0}(s))$ is Fredholm for some $s\in U$ it is Fredholm for all $s\in U$. As this implication holds for neighborhoods of every point in $[0,T]$, the set of $s\in [0,T]$ such that $(P_{<0}(0),P_{<0}(s))$ is Fredholm is open and closed. As the set contains $0$ and $[0,T]$ is connected, we can conclude that $(P_{<0}(0),P_{<0}(s))=(P_{<0}(0),\hat P_{<0}(s))$ is Fredholm for all $s\in [0,T]$. By the second main theorem (\ref{mainII}), this implies the claim.
\end{proof}

\section{A Counterexample with Bounded Perturbation}
\label{s6}
In this section an example is given to illustrate that $D_{\APS}$ will not always be Fredholm. There might be "infinite exchange" between the positive and the negative spectral subspace. This is possible, even if $A(t)$ has only discrete spectrum and its difference from $A(0)$ is bounded. The idea is to choose a perturbation of $A(0)$ such that the eigenspaces undergo a 90-degree-rotation "up to evolution with A(0)" under the evolution operator. The first step is to show that such an exchange works in a two dimensional subspace, with some bounds on the derivative of the perturbation. These bounds will then allow us to pass to an infinite direct sum, in which all eigenspaces are interchanged. This means that $\Ker(D_{\APS})=H_{<0}(0)\cap Q^{-1}H_{\geq 0}(T)$ will be infinite dimensional, whence $D_{\APS}$ is not Fredholm.

\subsection{Interchanging two Eigenspaces}
\begin{thm}
\label{swap}
There is $c\in \R$, such that the following is true: For any
$$a=\mat{\lambda_1}{0}{0}{\lambda_2},$$
with $\lambda_1,\lambda_2\in\R$, there is a $C^2$-family $(b(t))_{t\in[0,1]}$ of self-adjoint operators on $\C^2$ such that for $\lambda:=|\lambda_1-\lambda_2|+1$ we have
\begin{itemize}
\item$||b(t)||\leq 2$\\
\item$||b'(t)||\leq c \lambda$\\
\item$b(0)=b(1)=0$\\
\item$q(1,0)e_1\in \spann(e_2)$,
\end{itemize}
where q is the evolution operator associated with $a+b(t)$ and $e_i$ denotes the $i-th$ standard unit vector.
\end{thm}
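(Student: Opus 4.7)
My plan is to build $b$ via the interaction picture with respect to $a$, arranging that the conjugated family $\tilde b(t):=e^{-ita}b(t)e^{ita}$ is a time-dependent scalar multiple of the off-diagonal Pauli matrix $\sigma_1:=\mat{0}{1}{1}{0}$. Such a $\tilde b$ commutes with itself at different times, so its evolution operator is given by an explicit exponential, which lets me design the swap $e_1\mapsto\spann(e_2)$ directly into the construction. All the required estimates will then follow from unitarity and one commutator bound.

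First, I would shift $a$ to be traceless: set $\mu:=(\lambda_1+\lambda_2)/2$ and $a_0:=a-\mu I=\mat{(\lambda_1-\lambda_2)/2}{0}{0}{-(\lambda_1-\lambda_2)/2}$. The evolution operator $q$ of $a+b$ differs from the evolution $q_0$ of $a_0+b$ only by the scalar phase $e^{it\mu}$, so condition 4 holds for $q$ iff it does for $q_0$, and the advantage is $\|a_0\|=|\lambda_1-\lambda_2|/2<\lambda/2$. Next, I would pass to the interaction picture: with $\tilde q(t,0):=e^{-ita_0}q_0(t,0)$ and $\tilde b(t):=e^{-ita_0}b(t)e^{ita_0}$, a short differentiation shows $\tilde q(0,0)=I$ and $\frac{d}{dt}\tilde q=i\tilde b\tilde q$, so $\tilde q$ is the evolution operator of $\tilde b$. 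Since $e^{ia_0}$ is diagonal it preserves $\spann(e_2)$, so condition 4 for $q_0$ is equivalent to $\tilde q(1,0)e_1\in\spann(e_2)$.

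The explicit choice is $\tilde b(t):=\tfrac{\pi}{2}\phi(t)\sigma_1$, where $\phi\in C^\infty([0,1],\R)$ is fixed once and for all with $\phi(0)=\phi(1)=0$, $\int_0^1\phi(t)\,dt=1$, and $\|\phi\|_\infty\leq 4/\pi$ (a flat bump slightly above $1$ with smooth cutoffs suffices). Because $\tilde b(s)$ and $\tilde b(t)$ commute, the evolution is the exponential of the integral:
\begin{equation*}
\tilde q(1,0)=\exp\!\left(i\int_0^1\tilde b(r)\,dr\right)=\exp\!\left(i\tfrac{\pi}{2}\sigma_1\right)=\cos(\tfrac{\pi}{2})I+i\sin(\tfrac{\pi}{2})\sigma_1=i\sigma_1,
\end{equation*}
which sends $e_1$ to $ie_2\in\spann(e_2)$. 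Setting $b(t):=e^{ita_0}\tilde b(t)e^{-ita_0}$ gives a smooth self-adjoint family vanishing at the endpoints, with $\|b(t)\|=\tfrac{\pi}{2}|\phi(t)|\leq 2$ by unitarity of the conjugation. Differentiating,
\begin{equation*}
b'(t)=\tfrac{\pi}{2}\phi'(t)e^{ita_0}\sigma_1 e^{-ita_0}+\tfrac{\pi}{2}\phi(t)\,i\bigl[a_0,\,e^{ita_0}\sigma_1 e^{-ita_0}\bigr],
\end{equation*}
so $\|b'(t)\|\leq\tfrac{\pi}{2}\|\phi'\|_\infty+\pi\|\phi\|_\infty\|a_0\|\leq\tfrac{\pi}{2}\|\phi'\|_\infty+2\lambda\leq c\lambda$ with $c:=2+\tfrac{\pi}{2}\|\phi'\|_\infty$, using $\lambda\geq 1$ and the fact that $c$ depends only on the fixed $\phi$.

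There is no real obstacle beyond the conceptual step of passing to the interaction picture: this is what turns the non-autonomous, non-commuting evolution generated by $a+b$ into the evolution of a commuting family, which can be exponentiated exactly. Once that move is made, the Pauli choice $\sigma_1$ simultaneously produces the desired swap (through $\exp(i\pi\sigma_1/2)=i\sigma_1$) and has norm $1$, so no $\lambda$-dependence enters $b$ itself. The only $\lambda$-growth in $b'$ comes from the commutator with $a_0$, which is linear in $\|a_0\|\leq\lambda/2$ after the trace-normalising shift — exactly the bound claimed.
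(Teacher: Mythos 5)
Your proof is correct and takes essentially the same route as the paper: both construct $b$ by conjugating a fixed rotation generator through the free diagonal evolution, so that in the interaction picture the dynamics rotates $e_1$ into $\spann(e_2)$, with the bounds on $b$ and $b'$ coming from unitarity and a commutator (or phase-derivative) estimate linear in $|\lambda_1-\lambda_2|$. The only differences are presentational: the paper uses the profile $\phi'(t)$ with the generator $\mat{0}{i}{-i}{0}$ and simply writes down the resulting $q(t,0)$ explicitly, verifying it by differentiation, whereas you use $\frac{\pi}{2}\phi(t)\,\sigma_1$ and obtain $\tilde q$ by exponentiating a commuting family after a trace-normalising shift.
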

\head{Remark:}
The idea of the proof is to choose $b$ in such a way, that
$$ib(t)q(t,0)e_1=\gamma'(t)$$
"up to evolution with a", where $q$ is the evolution operator associated to the family $a+b(t)$ and $\gamma$ is a path from $e_1$ to $e_2$. Then $q(t,0)e_1=\gamma(t)$, "up to evolution with a", which preserves $\spann(e_2)$. Heuristically, this works well, because $q$ only depends on earlier values of $b$. In case $\lambda_1=-\lambda_2$ (which would be sufficient for a counterexample), this argument can be made rigorous by quotienting out the orbits of $exp(ita)$ and applying the Banach fixed point theorem to show that the above equation has a solution on the quotient. However, this proof is rather long and yields a less general result, so a less conceptual proof will be used below.

\begin{proof}
Let $\phi:[0,1]\rightarrow [0,\frac{\pi}{2}]$ be a smooth function (chosen independently of the $\lambda_i$) with 
\begin{align*}
|\phi'(t)|&\leq 2\\
\phi(0)&=0\\
\phi(1)&=\frac{\pi}{2}\\
\phi'(0)&=\phi'(1)=0.
\end{align*}
Let
$$b(t):=\mat{0}{i\phi'(t)exp(i(\lambda_1-\lambda_2)t)}{-i\phi'(t)exp(i(\lambda_2-\lambda_1)t)}{0}.$$
$b(t)$ is self-adjoint and the evolution operator of $a+b(t)$ is given by
$$q(t,0):=\mat{exp(i\lambda_1t)cos(\phi(t))}{-exp(i\lambda_1t)sin(\phi(t))}{exp(i\lambda_2t)sin(\phi(t))}{exp(i\lambda_2t)cos(\phi(t))},$$
as it can easily be seen that
$q(0,0)=id$
and straightforward calculation of both sides shows
$$\ddt q(t,0)=i(a+b(t))q(t,0).$$
The estimates follow, as $\phi'$ is bounded by 2, and $\phi''$ is bounded as well, as the interval is compact. Thus
$$||b(t)||=|\phi'(t)||exp(\pm i(\lambda_1-\lambda_2)t)|\leq 2$$
and there is a constant $c$ such that 
$$\norm{b'(t)}\leq |\phi''(t)|+|\phi'(t)(\lambda_1-\lambda_2)|\leq c\lambda.$$
$b(0)=b(1)=0$ follows from $\phi'(0)=\phi'(1)=0$.
Finally, note that
$$q(1,0)e_1=exp(i\lambda_2)e_2\in \spann(e_2).$$
\end{proof}

\subsection {Interchanging all Eigenspaces}
Now that we have seen how to interchange two eigenspaces, it remains to show that we can do it with infinitely many simultaneously.

\begin{thm}
Let $H:=\bigoplus\limits_{i=0}^\infty\C^2$ and let $(\lambda_i)_{i\geq0}$ be an unbounded increasing sequence of positive real numbers.
Let
$$A_0:=\bigoplus\limits_{i=0}^\infty a_i$$
with 
$$a_i=\mat{-\lambda_i}{0}{0}{\lambda_i}.$$
There is a bounded family $B:[0,T]\rightarrow B(H)$ such that $A(t):=A_0+B(t)$ satisfies the requirements from the setting and such that $D_{\APS}$ is not Fredholm.
\end{thm}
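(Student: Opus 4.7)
The strategy is to run the two-dimensional swap from Theorem \ref{swap} on every block $\C^2_i$ simultaneously, exploiting that the norm bound $\norm{b(t)}\leq 2$ produced there is independent of the eigenvalues $\lambda_i$, while the derivative bound $\norm{b'(t)}\leq c(2\lambda_i+1)$ is exactly what the graph norm of $W=\Dom(A_0)$ can absorb.

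Without loss of generality take $T=1$. For each $i$ with $\lambda_i\geq 3$, apply Theorem \ref{swap} with $a=a_i$ to obtain a $C^2$-family $b_i:[0,1]\to B(\C^2)$ of self-adjoint operators satisfying $\norm{b_i(t)}\leq 2$, $\norm{b_i'(t)}\leq c(2\lambda_i+1)$, $b_i(0)=b_i(1)=0$, and such that the evolution operator $q_i$ of $a_i+b_i$ satisfies $q_i(1,0)e_1\in\spann(e_2)$. For the finitely many $i$ with $\lambda_i<3$, set $b_i\equiv 0$. Define $B(t):=\bigoplus_i b_i(t)$; then $B$ is self-adjoint and uniformly bounded with $\norm{B(t)}_{B(H)}\leq 2$, and vanishes at $t=0,1$.

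Next I would verify that $A(t):=A_0+B(t)$ fits the setting. Self-adjointness on the common domain $W$ is immediate since $B(t)$ is bounded and self-adjoint. For Fredholmness, $A(t)$ is block-diagonal with each perturbed block having eigenvalues within $2$ of $\pm\lambda_i$ (hence bounded away from $0$ by at least $1$) and unperturbed blocks already invertible, so $A(t)$ is invertible and in particular Fredholm. For strong continuous differentiability in $B(W,H)$, the candidate derivative $A'(t)=\bigoplus_i b_i'(t)$ is bounded $W\to H$ uniformly in $t$ because
$$\norm{A'(t)x}^2 = \sum_i\norm{b_i'(t)x_i}^2 \leq c^2\sum_i(2\lambda_i+1)^2\norm{x_i}^2 \leq C\,\norm{x}_W^2;$$
strong continuity of $A'$ follows from continuity of each $b_i'$ together with dominated convergence against this same estimate, and strong differentiability of $A$ in $B(W,H)$ then comes from integration.

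Finally, by block-diagonality the evolution operator splits as $Q(t,0)=\bigoplus_i q_i(t,0)$, so for each $i$ with $\lambda_i\geq 3$ the function $f_i(t):=Q(t,0)e_1^{(i)}$ lies in $\Ker(D_{\APS})$: $Df_i=0$ by definition of $Q$; $f_i(0)=e_1^{(i)}\in H_{<0}(0)$ because $e_1^{(i)}$ is the $-\lambda_i$-eigenvector of $a_i=A(0)|_{\C^2_i}$; and $f_i(1)=q_i(1,0)e_1\in\spann(e_2^{(i)})\subseteq H_{\geq 0}(1)$ by Theorem \ref{swap} together with $A(1)=A_0$. The $f_i$ are linearly independent (their initial values form an orthonormal set), so $\Ker(D_{\APS})$ is infinite-dimensional and $D_{\APS}$ is not Fredholm. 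The only non-routine step is the bookkeeping for strong continuous differentiability of $A$ in $B(W,H)$: the blockwise derivatives $b_i'$ blow up with $\lambda_i$, and the construction works precisely because that blowup is exactly matched by the graph-norm weights $1+\lambda_i^2$ defining $W$.
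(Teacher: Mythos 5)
Your proposal is correct and follows essentially the same route as the paper: apply Theorem \ref{swap} blockwise with $\lambda_1=-\lambda_i$, $\lambda_2=\lambda_i$, take the direct sum, verify the hypotheses of the setting by noting that the blockwise derivative bound $c(2\lambda_i+1)$ is absorbed by the graph norm of $A_0$, and conclude that the functions $Q(\cdot,0)e_1^{(i)}$ span an infinite-dimensional subspace of $\Ker(D_{\APS})$. The one step you assert without argument, the block splitting $Q(t,0)=\bigoplus_i q_i(t,0)$, is exactly what the paper justifies via the uniqueness result (Theorem \ref{uniq}, or equivalently the Cauchy property), and your minor deviations (setting $b_i\equiv 0$ for the finitely many small $\lambda_i$, obtaining strong continuous differentiability by dominated convergence and integration rather than via Lemma \ref{densecont}) are harmless.
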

\head{Remark:} The form of $A_0$ was chosen for simplicity. The construction works with any self-adjoint operator that has pairs of arbitrarily large positive and negative eigenvalues in the same order of magnitude.
\begin{proof}
 For $i\geq0$, let $b_i$ and $q_i$ be chosen as $b$ and $q$ in theorem $\ref{swap}$ with $\lambda_1=-\lambda_i$ and $\lambda_2=\lambda_i$. Let 
$$B:=\bigoplus\limits_{i=0}^\infty b_i$$
$$Q:=\bigoplus\limits_{i=0}^\infty q_i,$$
where the direct sum is meant to include taking the closure. Define 
$$A(t):=A_0+B(t).$$
As every $b_i(t)$ is bounded by 2, $B(t)=A(t)-A(0)$ is bounded by 2. Thus 
$$\Dom(A(t))=\Dom(A_0)$$
with equivalent norms for all $t$. By the Kato-Rellich theorem, $A_0+B(t)$ is self-adjoint.\\\\

For $|\lambda_i|>4$, $a_i+b_i(t)$ is invertible with inverse bounded by the inequality
$$\norm{(a_i+b_i(t))^{-1}}\leq \frac{1}{\norm{a_i^{-1}}^{-1}-\norm{b_i(t)}}\leq \frac{2}{\lambda_i}$$
for every $t\in[0,1]$.
Thus 
$$\bigoplus\limits_{|\lambda_i|>4}(a_i+b_i(t))^{-1}$$
is a well defined inverse of
$$\bigoplus\limits_{|\lambda_i|>4}a_i+b_i(t).$$
As only finitely many $\lambda_i$ are smaller than 4 and operators between finite dimensional spaces are Fredholm, we can conclude that
$$A(t)=\bigoplus\limits_{|\lambda_i|\leq4}(a_i+b_i(t))\oplus \bigoplus\limits_{|\lambda_i|>4}(a_i+b_i(t))$$
is a direct sum of two Fredholm operators and thus Fredholm.\\\\

Let $\iota_i$ denote the inclusion of the $i$-th summand. 
For 
$$x=\insum{i} \iota_i(x_i)\in \Dom(A_0),$$ 
we know that
$$\insum{i} \ddt(A(t)\iota_i(x_i)) =\insum{i} \iota_i(b_i'(t)x_i)$$
converges absolutely, as $||b_i'||\leq 2c(\lambda_i+1)$ (with $c$ as  above). Thus $A(t)$ is strongly differentiable with strong derivative 
$$A'=\bigoplus\limits_{i=0}^\infty b_i',$$
which is uniformly bounded by $2c$ in $B(W,H)$.
As every $b_i'$ is continuous, $A'$ is strongly continuous on the linear span of all $\iota_i(\C^2)$. As this is a dense subspace of $W$, Lemma \ref{densecont} implies that $A'$ is strongly continuous in $B(W,H)$, hence $A$ is strongly continuously differentiable.\\\\

Let $\tilde q_i$ be the component of the evolution operator of $A$ on the i-th direct summand. As this satisfies  $\tilde q_i'(t,0)=i(a_i+b_i(t))\tilde q_i(t,0)$ and $\tilde q_i(0,0)=id$, we have $\tilde q_i=q_i$ by \ref{uniq}. Thus $Q$ agrees with the evolution operator associated with $A$ on every summand, so they are the same.

 For all $i\in\N$, $\iota_i(e_1)$ is a negative eigenvector of $A(0)=A_0$, but 
$$Q(1,0)\iota_i(e_1)=\iota_i(q_i(1,0)e_1)\in \spann(\iota_i(e_2))$$
is a positive eigenvector of $A(1)=A_0$ by construction. Thus 
$$\Ker(D_{\APS})\cong \Ker(\hat P_{<0}(T)_r)=  H_{<0}(0)\cap Q(0,1)H_{\geq 0}(1)=\overline{\{\spann\{\iota_i(e_1)|i\in\N\}}$$
is infinite dimensional and hence $D_{\APS}$ is not Fredholm.\\
\end{proof}

\head{Remark:} Note that requiring only $D_{\APS}$ to be Fredholm is not sufficient either. To see this extend the family constructed above with another one constructed similarly, such that there is still no spectral flow, $A(2)=A(0)$ and $Q(2,1)$ swaps $\iota_i(e_2)$ and $\iota_{i+1}(e_1)$ (up to a scalar) for all $i\in\N$. Then $Q(2,0)$ sends $\iota_i(e_1)$ to $\iota_{i+1}(e_1)$, $\iota_{i+1}(e_2)$ to $\iota_i(e_2)$ and $\iota_0(e_2)$ to $\iota_0(e_1)$. Thus
\begin{align*}
Q(0,2)H_{<0}(2)&=Q(2,0)^{-1}\overline{span(\{\iota_i(e_1)|i\in\N\})}\\
&=\overline{span(\{\iota_i(e_1)|i\in \N\}\cup\{\iota_0(e_2)\})}\\
&=H_{<0}(0)\oplus span(\iota_0(e_1)).
\end{align*}
This means $(P_{<0}(0),\hat P_{<0}(2))$ is a Fredholm pair with index -1, i.e. $D_{\APS}$ is Fredholm with index -1, but $\sfl(A)$ is 0. Thus Fredholmness really has to be required for every initial interval rather than just the whole interval.

\section*{Acknowledgments}
I want to thank Matthias Lesch and Koen van den Dungen for their support, guidance, and feedback, as well as for suggesting this interesting topic, which allowed me to be certain to have some results early on, while being able to extend them until the end.

I am also grateful to Christoph Brinkmann for talks that helped me structure my understanding of the topic and for reading through my (ever changing) final draft, improving both the readability and the correctness of this paper with his feedback.


\begin{thebibliography}{99}
\bibitem{AvSeSi}[AvSeSi]:J. Avron, R. Seiler, B. Simon \emph{The Index of a Pair of Projections} J. Funct. Anal. ,1994
\bibitem {BaSt}[B\"aSt]: C. B\"ar and A. Strohmaier,\emph{An Index Theorem for Lorentzian Spacetimes with Compact Spacelike Cauchy Boundary}, American Journal of Mathematics, Volume 141, Number 5, 2019
\bibitem{HiPh}[HiPh]: E. Hille and R.S. Phillips, \emph{Functional Analysis and Semi-Groups}, American Mathematical Society, 1957
\bibitem{KA}[KA]: T. Kato, \emph{Perturbation Theory for Linear Operators}, Springer-Verlag, Berlin/Heidelberg, 1995
\bibitem{LE}[LE]: M.Lesch, \emph{The Uniqueness of the Spectral Flow on Spaces of Unbounded Self-adjoint Fredholm Operators}, arXiv:math/0401411v2 ,2004 (Published in Spectral geometry of manifolds with boundary and decomposition of manifolds (B. Booss-Bavnbek and K. P. Wojciechowski Eds.), Contemp. Math. 366, Amer. Math. Soc., Providence, 2005)
\bibitem{PA}[PA]: A. Pazy, \emph{Semigroups of Linear Operators and Applications to Partial Differential Equations}, Springer-Verlag, New York, 1983
\bibitem{PE} [PE]: G. K. Pedersen, \emph{Analysis Now}, Springer-Verlag, New York, 1989
\bibitem {PH}[PH]:J. Phillips, \emph{Self-adjoint Fredholm operators and spectral flow} Canad. Math. Bull., 1996
\bibitem {RoSa}[RoSa]: J.Robbin and D. Salamon, \emph{the Specrtral flow and the Maslov index}, Bulletin of the London Mathematical Society, 1995
\end{thebibliography}
\end{document}